\newcommand{\de}[0]{\mathrel{\mathop:}=}
\newcommand{\ie}[0]{\mathrm{i}}
\newcommand{\dif}[1]{\mathrm{d}#1}
\newcommand{\C}[0]{\mathbb{C}}
\newcommand{\N}[0]{\mathbb{N}}
\newtheorem{theorem}{Theorem}
\newtheorem{corollary}{Corollary}
\newtheorem{lemma}{Lemma}
\theoremstyle{definition}
\newtheorem{remark}{Remark}
\title[Explicit estimates for $\zeta(s)$ in the critical strip under RH]{Explicit estimates for $\zeta(s)$ in the critical strip under the Riemann Hypothesis}
\author{Aleksander Simoni\v{c}}
\address{School of Science, The University of New South Wales (Canberra), ACT, Australia}
\email{a.simonic@student.adfa.edu.au}
\subjclass[2010]{11M26, 11N37; 11Y35}
\keywords{Riemann zeta-function, Riemann Hypothesis, Mertens function, Square-free numbers, Explicit results}
\date{\today}
\begin{document}

\begin{abstract}
Assuming the Riemann Hypothesis, we provide effective upper and lower estimates for $\left|\zeta(s)\right|$ right to the critical line. As an application we make explicit Titchmarsh's conditional bound for the Mertens function and Montgomery--Vaughan's conditional bound for the number of $k$-free numbers.
\end{abstract}

\maketitle
\thispagestyle{empty}

\section{Introduction}

Let $\zeta(s)$ be the Riemann zeta-function and $s=\sigma+\ie t$, where $\sigma$ and $t$ are real numbers. One of the great problems in zeta-function theory is to determine the true order of $|\zeta(s)|$ in the critical strip. Due to the functional equation we can assume $1/2 \leq\sigma\leq 1$. Having $\zeta(s)\ll_{\varepsilon} |t|^{\mu(\sigma)+\varepsilon}$ for $\varepsilon>0$ where
\[
\mu(\sigma) \de \limsup_{t\to\infty}\frac{\log{\left|\zeta\left(\sigma+\ie t\right)\right|}}{\log{t}},
\]
it is well known that $0\leq\mu(\sigma)\leq (1-\sigma)/2$. This estimate can be improved for certain values $\sigma$, see~\cite[Chapter 5]{Titchmarsh} and also~\cite{TrudgianANewUpper,TrudgianExplLogDer,Platt,HiaryAnExplicit,BourgainDecoupling,Patel,PatelPhD} to mention only recent results. Note that the Lindel\"{o}f Hypothesis states that $\mu(\sigma)\equiv0$. On the other hand, the Riemann Hypothesis (RH) drastically improves these estimates since it implies that
\begin{equation}
\label{eq:someEstimateForZeta}
\log{\zeta(s)} \ll_{\varepsilon,\sigma_0} \left(\log{t}\right)^{2(1-\sigma)+\varepsilon}
\end{equation}
for $\varepsilon>0$, $1/2<\sigma_0\leq\sigma\leq 1$ and $t$ large, see~\cite[Theorem 14.2]{Titchmarsh}. In particular, the bound~\eqref{eq:someEstimateForZeta} implies Lindel\"{o}f-type estimates for $\zeta(s)$ and $1/\zeta(s)$, see~\eqref{eq:someEstimateForRecZeta}. Titchmarsh~\cite{TitchConseq} provided quantitative versions of~\eqref{eq:someEstimateForZeta}, namely that RH guarantees
\begin{gather}
\log{\left|\frac{1}{\zeta(s)}\right|} \ll \frac{\log{t}}{\log\log{t}}\log{\frac{2}{\left(\sigma-\frac{1}{2}\right)\log{\log{t}}}}, \label{eq:TitchZetaBound} \\
\log{\left|\zeta(s)\right|} \ll \frac{\log{t}}{\log\log{t}} \label{eq:TitchZetaBound2}
\end{gather}
for $0<\sigma-1/2\ll 1/\log{\log{t}}$ and $t$ large. Estimates~\eqref{eq:TitchZetaBound} and~\eqref{eq:TitchZetaBound2} are of main interest near the critical line since
\begin{equation}
\label{eq:someEstimateForZeta2}
\log{\zeta(s)} \ll \frac{\left(\log{t}\right)^{2(1-\sigma)}}{\log{\log{t}}}
\end{equation}
for $1/\log{\log{t}}\ll\sigma-1/2\leq 1/2-\varepsilon$, $\varepsilon>0$ and $t$ large, see~\cite[Equation 14.14.5]{Titchmarsh}. It should also be noted that in the case $\sigma=1$ one can replace the right-hand side of~\eqref{eq:TitchZetaBound} and~\eqref{eq:TitchZetaBound2} with $\log{\log{\log{t}}}$, see~\cite[Theorem 14.9]{Titchmarsh}, and~\cite{Lamzouri,LanguascoTrudgian} for explicit results of similar kind.

The main purpose of this paper is to provide conditional effective versions of~\eqref{eq:TitchZetaBound} and~\eqref{eq:TitchZetaBound2}, see the following theorem.

\begin{theorem}
\label{thm:main}
Assume the Riemann Hypothesis. For $1/2<\sigma\leq 3/2$ and $2\exp{\left(e^2\right)}\leq t\leq T$ we have
\begin{gather}
\log{\left|\frac{1}{\zeta(s)}\right|} \leq \omega_1\left(\sigma,t_0;\log{\log{\frac{3T}{2}}}\right)\log{\frac{3T}{2}}, \label{eq:corMain} \\
\log{\left|\zeta(s)\right|} \leq \omega_2\left(t_0;\log{\log{\frac{3T}{2}}}\right)\log{\frac{3T}{2}}. \label{eq:corMain2}
\end{gather}
Here $2\gamma_1\leq t_0\leq 50$ with $\gamma_1$ being the positive ordinate of the first nontrivial zero, and the functions $\omega_1$ and $\omega_2$, which, as $T\to\infty$ are asymptotic to $1/\log{\log{(3T/2)}}$, see~\eqref{eq:limits} and~\eqref{eq:limits2}, are given by~\eqref{eq:omega1} and~\eqref{eq:omega2}, respectively. In particular,
\begin{gather}
\log{\left|\frac{1}{\zeta(s)}\right|} \leq \frac{1.756\log{(2T)}}{\log{\log{(2T)}}}\log{\left(1+\frac{37.345}{\left(\sigma-\frac{1}{2}\right)\log{\log{T}}}\right)} + \frac{2.51\log{(2T)}}{\log{\log{(2T)}}}, \label{eq:corMainConrete} \\
\log{\left|\zeta(s)\right|} \leq \frac{8.45\log{(2T)}}{\log{\log{(2T)}}} \label{eq:corMain2Conrete}
\end{gather}
for $T\geq (2/3)\exp{\left(e^{10}\right)}$.
\end{theorem}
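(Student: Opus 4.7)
The plan is to render Titchmarsh's derivation of~\eqref{eq:TitchZetaBound} and~\eqref{eq:TitchZetaBound2} (see~\cite[Theorems 14.2, 14.5]{Titchmarsh}) fully quantitative. The starting point is the identity, for a reference point $s_0\de\sigma_0+\ie t$ with $\sigma_0>1$,
\[
\log\zeta(s)-\log\zeta(s_0)=\sum_{\rho}\log\frac{s-\rho}{s_0-\rho}+R(s,s_0),
\]
obtained by integrating the Hadamard partial-fraction expansion of $\zeta'/\zeta$ from $s_0$ to $s$. Under RH every zero has the form $\rho=\tfrac{1}{2}+\ie\gamma$, and the smooth remainder $R(s,s_0)$, stemming from the $\Gamma$-factor in the functional equation, is explicitly of size $O(\log t)$. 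Since $|\log\zeta(s_0)|\leq\log\zeta(\sigma_0)$ is absolutely bounded when $\sigma_0$ stays above $1$, the task reduces to an explicit estimation of the zero sum.

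I would then split the zero sum at a cut-off $r>0$ into a ``near'' block $|\gamma-t|\leq r$ and a ``far'' block. Under RH the near summand equals
\[
\tfrac{1}{2}\log\frac{(\sigma-\tfrac{1}{2})^{2}+(t-\gamma)^{2}}{(\sigma_0-\tfrac{1}{2})^{2}+(t-\gamma)^{2}},
\]
and an explicit Riemann--von Mangoldt estimate bounds the cardinality of the near block by a constant times $r\log t+\log t$. For the far block, the summand is majorised by a multiple of $(\sigma_0-\sigma)(\sigma+\sigma_0-1)/((t-\gamma)^{2}+(\sigma_0-\tfrac{1}{2})^{2})$, and dyadic summation combined with the same counting bound produces a contribution of order $\log T$. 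The logarithmic singularity of the near block at $\sigma\to\tfrac{1}{2}^{+}$ generates the factor $\log(1+c/((\sigma-\tfrac{1}{2})\log\log T))$ visible in~\eqref{eq:corMainConrete}; for the upper bound on $\log|\zeta|$ the near-block sum is non-positive whenever $\sigma<\sigma_0$, which is why the analogue~\eqref{eq:corMain2Conrete} carries no such factor.

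The optimisation would set $\sigma_0-\tfrac{1}{2}\asymp 1/\log\log(3T/2)$, with $r$ of comparable size, to balance the two blocks. Plugging in an explicit Riemann--von Mangoldt inequality together with an explicit bound on $|\zeta(\sigma_0+\ie t)|$ from the Euler product then produces the asymptotic $\omega_1,\omega_2\sim 1/\log\log(3T/2)$, and, after numerical optimisation over the free parameters, the specific constants $1.756$, $37.345$, $2.51$ and $8.45$.

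The main obstacle will be quantitative sharpness. The three ingredients---the explicit Riemann--von Mangoldt estimate (valid for $t\geq t_0$, hence the role of the auxiliary parameter $t_0$ in the statement), the bound on $\log|\zeta(\sigma_0+\ie t)|$ near the $1$-line, and the explicit remainder $R$---each propagate constants into the final answer, and they interact nontrivially in the optimisation. Extracting the leading asymptotic $1/\log\log(3T/2)$ rather than merely $1/\log\log T$ requires tight control of lower-order terms and explains why the statement is phrased in terms of $\log(3T/2)$ rather than $\log t$. Ensuring that everything remains valid down to $t\geq 2\exp(e^{2})$, while the cleaner corollaries~\eqref{eq:corMainConrete} and~\eqref{eq:corMain2Conrete} require the stronger range $T\geq(2/3)\exp(e^{10})$, strongly suggests that several $\log\log$-sized safety margins must be built into the intermediate inequalities.
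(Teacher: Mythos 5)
Your proposal takes a genuinely different route from the paper. You propose to integrate the Hadamard partial-fraction expansion of $\zeta'/\zeta$ from a reference point $s_0$ to $s$, and then split the resulting \emph{sum over zeros} into near and far blocks. The paper instead follows Titchmarsh's 1927 argument: starting from the Hadamard product for $\Xi(z)=\xi(1/2+\ie z)$, it converts $\log|\zeta(s)|$ into an \emph{integral} of $Q(u)=N(u)-\text{(smooth main term)}=S(u)+R(u)$ over $u\in[\gamma_1,\infty)$ (Lemma~\ref{lem:logzeta}), then localizes the integral to $[t-\xi,t+\xi]$ by integrating the tails by parts against $S_1(u)$ (Lemma~\ref{lem:logzeta2}), and finally estimates the central piece by the conditional bound on $S(u)$. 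The whole point of this representation is that the explicit conditional bounds~\eqref{eq:SExpl} and~\eqref{eq:S1Expl} on $S$ and $S_1$ from~\cite{SimonicSonRH} feed in directly and transparently, with the cancellation between zeros and the Riemann--von Mangoldt main term already built in.

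There is a genuine gap in the proposal at exactly this point. You bound the near-block cardinality by an \emph{unconditional} Riemann--von Mangoldt estimate, ``a constant times $r\log t+\log t$''. With $r\asymp 1/\log\log T$, the $O(\log t)$ error term dominates and destroys the $1/\log\log$ saving you are after: the near block would contribute $\gg\log t\cdot\log\bigl(1/(\sigma-1/2)\bigr)$ instead of $\ll(\log t/\log\log t)\cdot\log\bigl(1/((\sigma-1/2)\log\log t)\bigr)$. The saving requires the \emph{conditional} count, i.e.\ the error $O(\log t/\log\log t)$ coming from $S(t)$ under RH. A similar issue appears in the far block: without pairing each zero against the smooth density $\frac{1}{2\pi}\log\frac{u}{2\pi}$ (which is precisely the substitution of~\eqref{eq:RvM} into the Stieltjes integral), the far-block sum is $\asymp(\sigma_0-\frac12)^2\sum_{|\gamma-t|>r}(t-\gamma)^{-2}\asymp\log t$ when $\sigma_0$ is bounded away from $1/2$ --- already too large. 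There is also an internal inconsistency: you begin with a reference point $\sigma_0>1$ (so that $\log\zeta(s_0)$ is $O(1)$), but then propose to optimize with $\sigma_0-\frac12\asymp 1/\log\log(3T/2)$, which forces $\sigma_0$ close to $1/2$; both cannot hold. These are not cosmetic issues --- they are precisely the obstacles that the paper's integral-of-$Q$ formulation, combined with explicit $S$/$S_1$ input, is designed to circumvent. To make your sketch work you would essentially have to rebuild that machinery (or follow the smoothing route of Montgomery--Vaughan \cite[Thm.~13.23]{MontgomeryVaughan}, where the conditional input enters through $\zeta'/\zeta$), so the complexity is not avoided, only relocated.
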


Theorem~\ref{thm:main} is included in Corollary~\ref{cor:main}, the proof of which will be provided in Section~\ref{sec:MainBound}. Observe that~\eqref{eq:TitchZetaBound2} implies
\[
\left|\zeta\left(\frac{1}{2}+\ie t\right)\right| \leq \exp{\left(C\frac{\log{t}}{\log\log{t}}\right)}
\]
for some $C>0$ and $t$ large, and~\eqref{eq:corMain2} or~\eqref{eq:corMain2Conrete} make this effective. This was recently made explicit by the author using different techniques, and the present work can be viewed as an application of the results from~\cite{SimonicSonRH}; we are using Titchmarsh's approach~\cite{TitchConseq} which depends on conditional bounds for $S(t)$ and $S_1(t)$, see Section~\ref{sec:MainBound}. However, inequality~\eqref{eq:corMain2Conrete} provides a worse estimate for $\left|\zeta\left(1/2+\ie t\right)\right|$ than~\cite[Corollary 1]{SimonicSonRH}.

In addition to Theorem~\ref{thm:main}, we also give two applications of such bounds, namely effective conditional estimates for the Mertens function $M(x)=\sum_{n\leq x}\mu(n)$ and for the number of $k$-free numbers $Q_k(x)$. As usual, $x\geq1$, $k\geq 2$ is an integer, and $\mu(n)$ is the M\"{o}bius function. Our main result in this direction is summarized by the next theorem.

\begin{theorem}
\label{thm:FinalApp}
Assume the Riemann Hypothesis. Then
\begin{equation}
\label{eq:thmMertensGen}
\left|M(x)\right| \leq 0.6\sqrt{x}\exp{\left(\frac{5.2251\log{x}}{\log{\log{\left(\frac{3e}{2}\sqrt{x}\right)}}}+\log{\log{x}}\right)}
\quad \textrm{for} \quad x\geq 10^{10^{4.487}}
\end{equation}
and
\begin{multline}
\label{eq:thmkFree}
\left|Q_{k}(x)-\frac{x}{\zeta(k)}\right| \leq 0.1x^{\frac{1}{k+1}}\exp\left(\frac{7.525\log{x}}{2(k+1)\log{\log{\left(\frac{3e}{2}x^{\frac{1}{2(k+1)}}\right)}}}\right. \\
\left.+\frac{\left(7.492k+8.25\right)\log{x}}{(k+1)\log{\log{\left(\frac{3e}{2}x^{\frac{1}{k+1}}\right)}}}+2\log{\log{x}}\right) \quad \textrm{for} \quad x\geq 10^{(k+1)10^{23.147}}.
\end{multline}
Additionally,
\begin{equation}
\label{eq:thmMertens}
\left|M(x)\right| \leq Ax^{\alpha}\log{x} \quad \textrm{for} \quad x\geq 10^{10^{X}}
\end{equation}
and
\begin{equation}
\label{eq:thmSquarefree}
\left|Q_2(x)-\frac{x}{\zeta(2)}\right| \leq Bx^{\beta}\log^{2}{x} \quad \textrm{for} \quad x\geq 10^{3\cdot10^{Y}},
\end{equation}
where values for the constants are given by Table~\ref{tab:FinalApp}.
\end{theorem}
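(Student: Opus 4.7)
The proof follows the classical contour-integration route via Perron's formula, with Theorem~\ref{thm:main} as the analytic input. Starting from $1/\zeta(s)=\sum\mu(n)n^{-s}$ and $\zeta(s)/\zeta(ks)=\sum\mu_{k}(n)n^{-s}$, an effective truncated Perron formula gives
$$M(x) = \frac{1}{2\pi\ie}\int_{c-\ie T}^{c+\ie T}\frac{x^{s}}{s\,\zeta(s)}\,\dif{s} + E_{M},\qquad Q_{k}(x) = \frac{1}{2\pi\ie}\int_{c-\ie T}^{c+\ie T}\frac{\zeta(s)\,x^{s}}{s\,\zeta(ks)}\,\dif{s} + E_{k}$$
for some $c>1$ and explicit truncation errors of size $\ll x^{c}/T$ up to standard boundary contributions.

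I would then shift the contour. For $M(x)$, RH makes $1/\zeta(s)$ holomorphic in $\sigma>1/2$, so the line can be pushed to $\sigma=1/2+\eta$ for small $\eta>0$ with no residue collected. For $Q_{k}(x)$ the integrand has a simple pole at $s=1$ contributing the main term $x/\zeta(k)$, after which I would pull the contour to $\sigma=1/(k+1)$: this lies to the right of the RH barrier $1/(2k)$ for the zeros of $\zeta(ks)$, and, since $1/(k+1)<1/2$ for $k\geq 2$, I would apply the functional equation to write $\zeta(s)=\chi(s)\zeta(1-s)$, where $\lvert\chi(s)\rvert\asymp\lvert t/(2\pi)\rvert^{1/2-\sigma}$ and the reflected abscissa $k/(k+1)$ lies in the range covered by Theorem~\ref{thm:main}. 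Inserting~\eqref{eq:corMainConrete} and~\eqref{eq:corMain2Conrete} into the new vertical integrals yields a bound of the shape
$$\text{vertical integral}\ll x^{\sigma}(\log T)\exp\!\bigl(\omega_{1}\log(3T/2)+\omega_{2}\log(3T/2)\bigr)$$
with the appropriate $\sigma$ in each case.

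The last step is parameter optimisation: take the Perron truncation $T$ as a power of $x$ (so that $\log(3T/2)\asymp\log x$ and $\log\log(3T/2)\asymp\log\log x$), and $\eta$ of order $1/\log\log x$, balancing $x^{\eta}$ against the growth of $\exp(\omega_{1}\log T)$. The constants $5.2251$, $7.525$ and $7.492k+8.25$ in~\eqref{eq:thmMertensGen}--\eqref{eq:thmkFree} then fall out of $1.756$, $2.51$ and $8.45$ in Theorem~\ref{thm:main} via this balancing, while the cruder estimates~\eqref{eq:thmMertens}, \eqref{eq:thmSquarefree} of Table~\ref{tab:FinalApp} amount to bounding the leading exponential crudely away from the critical line. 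The principal obstacle I foresee is the bookkeeping: verifying that the size thresholds $x\geq 10^{10^{4.487}}$ and $x\geq 10^{(k+1)10^{23.147}}$ are large enough that (i) the Perron truncation error is absorbed into the stated bound, (ii) the factor $\log(1+37.345/(\eta\log\log T))$ inside $\omega_{1}$ stays essentially constant, and (iii) careful numerical optimisation keeps the leading multiplicative constants $0.6$ and $0.1$ in front of $x^{1/2}$ and $x^{1/(k+1)}$.
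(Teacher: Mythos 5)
Your proof for $M(x)$ matches the paper's route: truncated Perron at $c=1+1/\log x$, a contour shift to $\sigma_0\in(1/2,1)$ with $\sigma_0-\tfrac12$ of order $1/\log\log x$, insertion of the conditional bound~\eqref{eq:corMain} for $1/\zeta$, and a numerical bound for $1/\zeta(s)$ at bounded height (Lemma~\ref{lem:zetabound}); the remaining work is optimisation of $\sigma_0$, $t_0$.

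Your proposed argument for $Q_k(x)$, however, is genuinely different from the paper's and does not produce the exponent $1/(k+1)$. You apply Perron directly to $\zeta(s)/\zeta(ks)$ and shift to $\sigma=1/(k+1)<1/2$. On that line one must pay $|\zeta(\sigma+\ie t)|\asymp|t|^{1/2-\sigma}$ from the functional equation, so the vertical integral is of size roughly $x^{1/(k+1)}\,T^{1/2-1/(k+1)+\varepsilon}$. At the same time the Perron truncation error is of size roughly $x/T$. No choice of $T$ balances these below $x^{1/2+\varepsilon}$: pushing $T$ up to kill the truncation error inflates the $T^{1/2-1/(k+1)}$ factor, and one lands back at the trivial conditional exponent $1/2$. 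The paper avoids this via the Montgomery--Vaughan device: introduce $y=x^{1/(k+1)}$, split $Q_k(x)=Q_{k,1}+Q_{k,2}$ over $n\le y$ and $n>y$, treat $Q_{k,1}$ elementarily (this already produces the main term $x/\zeta(k)$ and a contribution bounded by $M(y)$ and a trivial $O(y)$), and apply Perron only to $Q_{k,2}$ with the Dirichlet series $\zeta(s)f_y(ks)$, where $f_y$ is the tail $\sum_{n>y}\mu(n)n^{-s}$. The conditional bound $|f_y(s)|\ll y^{\alpha-\sigma}\log y$ with $\alpha$ close to $1/2$ (Lemma~\ref{lem:fy}, which itself is a consequence of the $M(x)$ estimate) supplies the crucial extra decay in $y$, and the contour is then shifted only to $\sigma'>1/2$, never across the critical line, so no functional equation and no $|t|^{1/2-\sigma}$ loss enter. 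This is the step you are missing, and without it the exponent $1/(k+1)$ is out of reach.

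Two smaller remarks. First, the paper is careful to note that the coefficients of $\zeta(s)f_y(ks)$ are only $O(y)$, not $O(1)$ (Lemma~\ref{lem:zetafy}), which is needed to estimate the Perron truncation error correctly; a naive application of Perron to this series would repeat the slip that the paper flags in Montgomery--Vaughan's original exposition. Second, the constants $5.2251$, $7.525$, $7.492k+8.25$ do not come from $1.756$, $2.51$, $8.45$ of Theorem~\ref{thm:main}: those are the values at $t_0=2\gamma_1$ and $u\ge 10$, whereas the proofs of~\eqref{eq:thmMertensGen}--\eqref{eq:thmkFree} use optimised $t_0\approx 30$--$38$ and feed the resulting $\omega_1$, $\omega_2$ directly through Theorems~\ref{thm:MainMoebius} and~\ref{thm:MainkFree}.
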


\begin{table}
   \centering
\begin{footnotesize}
\begin{tabular}{cllllllllll}
\toprule
$\alpha$ & $0.999$ & $0.99$ & $0.9$ & $0.85$ & $0.8$ & $0.75$ & $0.7$ & $0.65$ & $0.6$ & $0.55$ \\
$X$ & $4.487$ & $4.543$ & $5.255$ & $5.837$ & $6.654$ & $7.865$ & $9.798$ & $13.003$ & $19.53$ & $39.121$ \\
$A$ & $0.517$ & $0.505$ & $0.408$ & $0.362$ & $0.322$ & $0.286$ & $0.254$ & $0.227$ & $0.202$ & $0.179$ \\
\midrule
$\beta$ & $0.4999$ & $0.499$ & $0.49$ & $0.48$ & $0.47$ & $0.46$ & $0.45$ & $0.44$ & $0.4$ & $0.35$ \\
$Y$ & $23.147$ & $23.273$ & $24.628$ & $26.326$ & $28.276$ & $30.534$ & $33.179$ & $36.314$ & $58.29$ & $234.21$ \\
$B$ & $0.0852$ & $0.0850$ & $0.0826$ & $0.0799$ & $0.0774$ & $0.0750$ & $0.0726$ & $0.0704$ & $0.0622$ & $0.0533$ \\
\bottomrule
\end{tabular}
\end{footnotesize}
   \caption{Values for the constants from Theorem~\ref{thm:FinalApp}.}
   \label{tab:FinalApp}
\end{table}

One function which is commonly associated with $M(x)$ is $m(x)=\sum_{n\leq x}\mu(n)/n$. It is well known that $m(x)=o(x)$. The following corollary to inequality~\eqref{eq:thmMertens} can be viewed as the conditional and effective version of this relation. Later we will give a quantitative formulation of the assertion~\cite[Theorem 14.25 (A)]{Titchmarsh} that RH implies that $\sum_{n=1}^{\infty}\mu(n)n^{-s}$ converges to $1/\zeta(s)$ for every $\sigma>1/2$, see Theorem~\ref{thm:m}.

\begin{corollary}
\label{cor:m}
Assume the Riemann Hypothesis. Then
\begin{equation}
\label{eq:m}
\left|m(x)\right| \leq \frac{51.005\log{x}+5050}{x^{0.01}}
\end{equation}
for $x\geq 10^{10^{4.543}}$.
\end{corollary}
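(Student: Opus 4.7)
The plan is to pass from $M$ to $m$ by partial summation in the ``backward'' direction, using the fact that under RH the series $\sum \mu(n)/n$ converges to $1/\zeta(1)=0$ (this is exactly the content flagged for Theorem~\ref{thm:m}). Concretely, Abel summation applied to $\sum_{x<n\leq N}\mu(n)/n$ with the weight $f(t)=1/t$ yields
\[
\sum_{x<n\leq N}\frac{\mu(n)}{n}=\frac{M(N)}{N}-\frac{M(x)}{x}+\int_{x}^{N}\frac{M(t)}{t^{2}}\,\dif{t}.
\]
Letting $N\to\infty$ and using $M(t)/t\to 0$ (a consequence of~\eqref{eq:thmMertens}) together with $\sum_{n=1}^{\infty}\mu(n)/n=0$ gives the working identity
\[
m(x)=\frac{M(x)}{x}-\int_{x}^{\infty}\frac{M(t)}{t^{2}}\,\dif{t}.
\]

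Next I would apply~\eqref{eq:thmMertens} with the entry $\alpha=0.99$, $A=0.505$, $X=4.543$ from Table~\ref{tab:FinalApp}, precisely so that the threshold $x_0\de 10^{10^{4.543}}$ matches the one stated in the corollary. For every $t\geq x_0$ this yields $|M(t)|\leq 0.505\, t^{0.99}\log t$, hence $|M(x)|/x\leq 0.505\log(x)/x^{0.01}$, while integration by parts gives
\[
\int_{x}^{\infty}\frac{\log t}{t^{1.01}}\,\dif{t}=\frac{100\log x+10000}{x^{0.01}},
\]
the boundary term at infinity vanishing since $t^{-0.01}\log t\to 0$. Multiplying by $0.505$ bounds the tail integral by $(50.5\log x+5050)/x^{0.01}$.

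Adding the two contributions produces
\[
|m(x)|\leq\frac{(0.505+50.5)\log x+5050}{x^{0.01}}=\frac{51.005\log x+5050}{x^{0.01}},
\]
which is~\eqref{eq:m} and holds throughout $x\geq x_0$. The only nontrivial input is the convergence $\sum_{n=1}^{\infty}\mu(n)/n=0$; everything else is a short computation with a rational choice of $\alpha$. I expect the main obstacle — or at least the only place where care is needed — to be justifying this tail identity before Theorem~\ref{thm:m} is formally proved, which can be circumvented either by placing the proof of Corollary~\ref{cor:m} after that theorem, or by invoking~\cite[Theorem 14.25 (A)]{Titchmarsh} directly. After that, the numerical constants $51.005=0.505(1+100)$ and $5050=0.505\cdot 10000$ fall out automatically from the specific choice $\alpha=1-0.01$.
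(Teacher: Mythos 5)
Your proof is correct and is essentially the route the paper takes: the paper packages the partial-summation step as Theorem~\ref{thm:m} for general $s$ with $\sigma>1/2$ and then specialises to $s=1$, whereas you carry out the Abel summation directly at $s=1$ and arrive at the same identity $m(x)=M(x)/x-\int_x^\infty M(t)t^{-2}\,\dif{t}$. Plugging in the Table~\ref{tab:FinalApp} entry $\alpha=0.99$, $A=0.505$ at the threshold $X=4.543$ reproduces exactly the constants $51.005$ and $5050$, matching the paper's computation.
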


Our proof of Theorem~\ref{thm:FinalApp} strongly relies on the ideas from Titchmarsh~\cite{TitchConseq}, and from Montgomery and Vaughan~\cite{MontyVaughanSquarefree}. It should be mentioned here that~\eqref{eq:thmMertens} and~\eqref{eq:thmSquarefree} do not simply follow from~\eqref{eq:thmMertensGen} and~\eqref{eq:thmkFree}, and special consideration is needed to provide values for $X$, $A$, and for $Y$, $B$ for selected $\alpha$ and $\beta$, respectively. Moreover, we identify some irregularities in Titchmarsh's later proof~\cite[Chapter 14]{Titchmarsh} and in Montgomery--Vaughan's proof, see Sections~\ref{sec:kfree} and~\ref{sec:MainBound}.

While comparing Theorem~\ref{thm:FinalApp} with other (unconditional) explicit results, it is not hard to verify that even constants from the first column in Table~\ref{tab:FinalApp} will produce better bounds than~\eqref{eq:Ramare},~\eqref{eq:Chalker}, and~\eqref{eq:Cohen}. The same is true also for~\eqref{eq:m}. This is mainly because of our large admissible values for $x$, which is a consequence of the double logarithm from Theorem~\ref{thm:main} and also of relatively large constants in estimates for $S(t)$ and $S_1(t)$, see~\eqref{eq:SExpl} and~\eqref{eq:S1Expl}. Possible improvement upon~\eqref{eq:thmMertens} by using~\eqref{eq:someEstimateForZeta} or~\eqref{eq:someEstimateForZeta2} is briefly described in Remark~\ref{rem:RHB}.

The outline of this paper is as follows. In Section~\ref{sec:app} we list some results related to Theorem~\ref{thm:FinalApp} and sketch the original proofs while Section~\ref{sec:MainBound} is devoted to the proof of Theorem~\ref{thm:main}. General bounds for $M(x)$ and $Q_k(x)$ are formulated and proved in Section~\ref{sec:Perron}, where also an effective truncated Perron's summation formula (Theorem~\ref{thm:truncatedPerron}) is given. The proofs of Theorem~\ref{thm:FinalApp} and Corollary~\ref{cor:m} are provided in Section~\ref{sec:proofApp}.

\section{Two applications}
\label{sec:app}

The aim of this section is to discuss some results and relevant techniques concerning upper bounds for partial sums of the M\"{o}bius function $M(x)$ and for the number of $k$-free numbers $Q_k(x)$.

\subsection{On the Mertens function $M(x)$}

It is well known that the assertion $M(x)=o(x)$ is equivalent to the Prime Number Theorem. The most recent explicit version of this relation is provided by Ramar\'{e}~\cite{Ramare2013}, namely
\begin{equation}
\label{eq:Ramare}
\left|M(x)\right| \leq \frac{0.013\log{x}-0.118}{\log^{2}{x}}x
\end{equation}
is true for $x\geq 1.08\cdot10^6$. Applying the zero-free regions for $\zeta(s)$ will improve bounds for $M(x)$, e.g., the Vinogradov--Korobov region~\cite[Chapter 6]{Ivic} implies the strongest unconditional estimate
\[
M(x) \ll x\exp{\left(-C\left(\log{x}\right)^{\frac{3}{5}}\left(\log{\log{x}}\right)^{-\frac{1}{5}}\right)}
\]
for some absolute constant $C>0$, see~\cite[p.~191]{WalfiszWeyl}. Effective results which are based on the classical zero-free region and are thus in the form
\begin{equation}
\label{eq:Chalker}
\left|M(x)\right| \leq C_{W}x\exp{\left(-c_{W,\varepsilon}\sqrt{\log{x}}\right)}, \quad x > x_0\left(W,\varepsilon\right)
\end{equation}
for some known positive constants $C_{W}$, $c_{W,\varepsilon}$ and $x_0\left(W,\varepsilon\right)$, e.g., $C_{W}=6.1\cdot10^{8}$, $c_{W,\varepsilon}=0.297$ and $x_0\left(W,\varepsilon\right)=\exp{\left(14305.32\right)}$, are given in~\cite{Chalker}. Both~\cite{Ramare2013} and~\cite{Chalker} provide also explicit bounds for $m(x)$ which are based on~\eqref{eq:Ramare} and~\eqref{eq:Chalker}.

Littewood~\cite{LittlewoodMobius} proved that RH is equivalent to
\begin{equation}
\label{eq:mobiusDef}
M(x) \ll_{\varepsilon}x^{\frac{1}{2}+\varepsilon}
\end{equation}
for every $\varepsilon>0$. The proof can be divided into three groups:
\begin{enumerate}
\item Partial summation gives
\begin{equation}
\label{eq:PartSumMobius}
\frac{1}{s\zeta(s)} = \int_{0}^{\infty} \frac{M(x)}{x^{s+1}}\dif{x}
\end{equation}
for $\sigma>1$.
\item Truncated Perron's formula implies
\begin{equation}
\label{eq:PerronNonexplicit}
M(x) = \frac{1}{2\pi\ie}\int_{c-\ie T}^{c+\ie T}\frac{x^{s}\dif{s}}{s\zeta(s)} + O\left(\frac{x^{c}}{(c-1)T}\right) + O\left(\frac{x\log{x}}{T}\right) + O(1)
\end{equation}
for $c>1$ and $T\geq 1$.
\item RH guarantees
\begin{equation}
\label{eq:someEstimateForRecZeta}
\frac{1}{\zeta(s)} \ll_{\varepsilon,\sigma_0} |t|^{\varepsilon}
\end{equation}
for $1/2<\sigma_0\leq\sigma\leq 1$.
\end{enumerate}
If~\eqref{eq:mobiusDef} is true, then the integral in~\eqref{eq:PartSumMobius} is a holomorphic function on the half-plane $\sigma>1/2+\varepsilon$ and thus so too is $1/\zeta(s)$ by analytic continuation. Conversely, RH allows us to move the line of integration in~\eqref{eq:PerronNonexplicit} arbitrarily close to the critical line, while~\eqref{eq:someEstimateForRecZeta} and the choice $T=x^c$ for $c=1+1/\log{x}$ then imply~\eqref{eq:mobiusDef}, see~\cite[Theorem 14.25 (C)]{Titchmarsh} for details.

Since Littlewood's paper there has been made some progress to determine $\varepsilon$ from~\eqref{eq:mobiusDef} as a function of $x$. Landau~\cite{LandauMobius} showed that $\varepsilon \ll \log{\log{\log{x}}}/\log{\log{x}}$, while Titchmarsh~\cite{TitchConseq} improved this to
\begin{equation}
\label{eq:TitchMobius}
M(x) \ll \sqrt{x}\exp{\left(\frac{C\log{x}}{\log{\log{x}}}\right)}
\end{equation}
for some $C>0$ by using~\eqref{eq:TitchZetaBound}. For a long time Titchmarsh's result has not been improved until Maier and Montgomery~\cite{MaierMontgomery} proved that $\varepsilon\ll \left(\log{x}\right)^{-22/61}$. Shortly after their proof was published, Soundararajan~\cite{SoundMobius} obtained $\varepsilon\ll\left(\log{\log{x}}\right)^{14}\left(\log{x}\right)^{-1/2}$. According to~\cite{Balazard}, it is possible to refine his proof and show that
\[
M(x) \ll_{\varepsilon} \sqrt{x}\exp{\left(C\sqrt{\log{x}}\left(\log{\log{x}}\right)^{\frac{5}{2}+\varepsilon}\right)},
\]
which is currently the strongest known result while assuming only RH. For better estimates under the assumption of other plausible conjectures surrounding the zeta-function theory, see~\cite{Ng2004,Saha}. For a generalisation of Soundararajan's estimate to partial sums of $\mu(n)$ in arithmetic progressions, see~\cite{HalupczokSuger}.


Having Theorem~\ref{thm:main} at disposal, it is not hard to obtain an effective version of~\eqref{eq:TitchMobius} by following procedure with steps (2) and (3) from Littlewood's proof. We choose $c=1+1/\log{x}$ and $T=e\sqrt{x}$, see Corollary~\ref{cor:Perron}. Then we move the line of integration to some $\sigma_0\in(1/2,1)$ and derive bounds for the resulting integrals by using~\eqref{eq:corMain} and Lemma~\ref{lem:zetabound}, see the proof of Theorem~\ref{thm:MainMoebius} for all details. Estimate~\eqref{eq:thmMertensGen} will follow after taking $\sigma_0-1/2\ll 1/\log{\log{x}}$ for suitable chosen constants, while the proof of~\eqref{eq:thmMertens} now consists of optimizing $\sigma_0$ and $t_0$ in order to obtain desired values for $\alpha$.

\begin{remark}
\label{rem:RHB}
It might be possible to improve $X$ from~\eqref{eq:thmMertens} for $\alpha$ close to $1$ by using effective versions of~\eqref{eq:someEstimateForZeta} or~\eqref{eq:someEstimateForZeta2} in combination with
\begin{equation}
\label{eq:Mhat}
\widehat{M}(x+h) - \widehat{M}(x) = \frac{1}{2\pi\ie}\int_{\sigma_0-\ie\infty}^{\sigma_0+\ie\infty} \frac{(x+h)^{s+1}-x^{s+1}}{s(s+1)\zeta(s)}\dif{s},
\end{equation}
where $\widehat{M}(x)\de\sum_{n\leq x}(x-n)\mu(n)$, $0<h\leq x$ and $\sigma_0\in\left(1/2,1\right)$. Formula~\eqref{eq:Mhat} follows after moving the line of integration in the corresponding Perron's formula, the process which is justified under the assumption of RH. Observe that
\begin{equation}
\label{eq:Mhat2}
\left|\left(\widehat{M}(x+h) - \widehat{M}(x)\right)h^{-1} - M(x)\right| \leq h+1.
\end{equation}
Take $\sigma_0\in\left(1/2,1\right)$. By~\eqref{eq:someEstimateForZeta2} there exist $t_0>0$ and $C>0$ such that
\[
\left|\frac{1}{\zeta\left(\sigma_0+\ie t\right)}\right| \leq t^{\varepsilon_0}, \quad
\varepsilon_0 = \frac{C}{\left(\log{t_0}\right)^{2\sigma_0-1}\log{\log{t_0}}} < 1
\]
for $t\geq t_0$. Take $h=x^{\kappa}$ for $0<\kappa\leq 1$. By using the reflection principle, splitting the range of integration in~\eqref{eq:Mhat} into three parts from $\sigma_0$ to $\sigma_0+\ie t_0$ to $\sigma_0+\ie xh^{-1}$ to $\sigma_0+\ie \infty$, and then using~\eqref{eq:Mhat2} together with
\[
\left|\frac{(x+h)^{s+1}-x^{s+1}}{s+1}\right| \leq h\left(x+h\right)^{\sigma_0} \ll hx^{\sigma_0}
\]
for the first two integrals and $\left|(x+h)^{s+1}-x^{s+1}\right|\leq 2\left(x+h\right)^{\sigma_0+1}\ll x^{\sigma_0+1}$ for the last, we obtain
\[
M(x) \ll h + x^{\sigma_0} + \left(\frac{x}{h}\right)^{\varepsilon_0} x^{\sigma_0} \ll x^{\kappa} + x^{\sigma_0+\left(1-\kappa\right)\varepsilon_0}.
\]
The problem is now to optimize $\kappa$. However, we must emphasize that the implied constants in the latter inequality depend also on knowing the precise behaviour of $\left|\zeta\left(\sigma_0+\ie t\right)\right|$ for all $t\in\left[0,t_0\right]$ where $t_0$ could be large.
\end{remark}

\subsection{On the number of $k$-free numbers}
\label{sec:kfree}

Let $k\geq 2$ be an integer. We say that $n\in\N$ is a $k$-free number if it has no nontrivial divisor which is a perfect $k$th power. Denote by $Q_k(x)$ the number of $k$-free numbers not exceeding $x\geq 1$. It is not hard to prove by elementary methods that
\begin{equation}
\label{eq:kfree}
Q_k(x) = \sum_{n\leq x}\sum_{d^{k}|n}\mu(d) = \frac{x}{\zeta(k)} + O\left(x^{\frac{1}{k}}\right),
\end{equation}
where implicit constants depend on $k$, see~\cite[Theorem 2.2]{MontgomeryVaughan} in the case $k=2$. Cohen et al.~\cite{CohenDressMarraki} provide an explicit bound
\begin{equation}
\label{eq:Cohen}
\left|Q_2(x)-\frac{x}{\zeta(2)}\right| \leq 0.02767\sqrt{x}
\end{equation}
for $x\geq 4.4\cdot10^5$, while the author is not aware of any published effective estimates on $Q_k(x)$ for larger $k$. Similarly as before, the Vinogradov--Korobov zero-free region implies the strongest unconditional estimate
\[
Q_k(x) = \frac{x}{\zeta(k)} + O\left(x^{\frac{1}{k}}\exp{\left(-Ck^{-\frac{8}{5}}\left(\log{x}\right)^{\frac{3}{5}}\left(\log{\log{x}}\right)^{-\frac{1}{5}}\right)}\right)
\]
for $C>0$ and the implied constants may depend on $k$, see~\cite[p.~192]{WalfiszWeyl}. Assuming RH, several authors have made improvements upon~\eqref{eq:kfree}, e.g., Montgomery and Vaughan~\cite{MontyVaughanSquarefree} proved that
\begin{equation}
\label{eq:kfreeMV}
Q_k(x) = \frac{x}{\zeta(k)} + O\left(x^{\frac{1}{k+1}+\varepsilon}\right).
\end{equation}
It is conjectured that~\eqref{eq:kfreeMV} is true with $2k$ instead of $k+1$, see~\cite{MontyVaughanSquarefree} for results prior to~\eqref{eq:kfreeMV}, and~\cite{MossinghoffOliveiraTrudgian} for an overview of recent advances.

We are going to state the main ideas in Montgomery and Vaughan's method with our explicit intentions in mind. Their proof starts with the observation that one can use~\eqref{eq:kfree} to write
\[
Q_{k}(x) = \sum_{\substack{n^{k}m\leq x \\ n\leq y}} \mu(n) + \sum_{\substack{n^{k}m\leq x \\ n>y}} \mu(n)
\]
for $1\leq y\leq x^{1/k}$. Denoting by $Q_{k,1}(x)$ and $Q_{k,2}(x)$ the above two sums, written in the same order, it is not hard to show by elementary methods that
\begin{equation}
\label{eq:Q1}
Q_{k,1}(x) = \sum_{n\leq y} \mu(n)\left\lfloor\frac{x}{n^k}\right\rfloor = \frac{x}{\zeta(k)} - \left(xf_{y}(k)+\frac{1}{2}M(y)+S_k(x,y)\right),
\end{equation}
where $M(x)$ is the Mertens function,
\begin{equation}
\label{eq:fy}
f_{y}(s) \de \sum_{y<n}\frac{\mu(n)}{n^s}
\end{equation}
for $\sigma>1$, and
\[
S_k(x,y)\de \sum_{n\leq y} \left(\frac{x}{n^k}-\left\lfloor{\frac{x}{n^k}}\right\rfloor-\frac{1}{2}\right)\mu(n).
\]
Trivially, $\left|S_k(x,y)\right|\leq y/2$. On the other hand, partial summation and~\eqref{eq:mobiusDef} implies conditional estimate $\left|f_y(s)\right|\ll_{\varepsilon} y^{1/2-\sigma+\varepsilon}$, see Lemma~\ref{lem:fy} for the precise explicit result. Concerning the second term $Q_{k,2}(x)$, the truncated Perron's formula is used together with
\begin{equation}
\label{eq:zetafy}
\zeta(s)f_{y}(ks) = \sum_{m=1}^{\infty}\sum_{y<n}\frac{\mu(n)}{\left(n^{k}m\right)^s},
\end{equation}
which is valid for $\sigma>1$, to obtain
\begin{equation}
\label{eq:PerronNonexplicit2}
Q_{k,2}(x) = \frac{1}{2\pi\ie}\int_{c-\ie T}^{c+\ie T}\frac{\zeta(s)f_{y}(ks)x^s\dif{s}}{s} + O\left(\frac{yx^{c}}{(c-1)T}\right) + O\left(\frac{yx\log{x}}{T}\right) + O(y)
\end{equation}
for $c>1$ and $T\geq 1$. In~\cite{MontyVaughanSquarefree} the choice $c=1+1/\log{x}$ and $T=x$ is proposed, so the error term in~\eqref{eq:PerronNonexplicit2} becomes $\ll y\log{x}$. RH allows to move the line of integration to the left, and taking $y=x^{1/(k+1)}$ will then produce~\eqref{eq:kfreeMV}. We should remark here that it seems like it is assumed in~\cite{MontyVaughanSquarefree} that coefficients of the Dirichlet series for~\eqref{eq:zetafy} are bounded by some absolute constant, resulting into $x^{\varepsilon}$, $\varepsilon>0$, for the error terms in~\eqref{eq:PerronNonexplicit2}. Although this claim does not change the final result~\eqref{eq:kfreeMV}, we would like to demonstrate that it is not correct. Note also that a sketch of the proof in~\cite[pp.~446--447]{MontgomeryVaughan} is different and avoids~\eqref{eq:PerronNonexplicit2}.

\begin{lemma}
\label{lem:zetafy}
Let $\sigma>1$, $y\geq 1$, and $k\geq 2$ be an integer. Assume that $\sum_{d=1}^{\infty}a_{d}d^{-s}$ is the Dirichlet series for $\zeta(s)f_{y}(ks)$. Then $\left|a_d\right|\leq y$.
\end{lemma}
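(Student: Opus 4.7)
The plan is to compute $a_d$ explicitly via Dirichlet convolution, rewrite the awkward range $n > y$ using Möbius cancellation on a divisor set, and then bound what remains by a trivial count. Writing $f_y(ks)$ as a Dirichlet series in $s$, its coefficient at $d^{-s}$ equals $\mu(n)$ when $d=n^k$ with $n>y$ and is zero otherwise; convolving with the all-ones sequence coming from $\zeta(s)$ gives
\[
a_d \;=\; \sum_{\substack{n^k \mid d \\ n > y}} \mu(n).
\]

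Since $\mu$ vanishes on non-squarefree integers, only squarefree $n$ contribute. For squarefree $n$, the condition $n^k \mid d$ is equivalent to $n \mid R$, where
\[
R \;=\; R(d) \;:=\; \prod_{p \,:\, p^k \mid d} p
\]
is the squarefree ``$k$th radical'' of $d$. This converts the divisibility-by-a-power constraint into plain divisibility of a squarefree integer:
\[
a_d \;=\; \sum_{\substack{n \mid R \\ n > y}} \mu(n).
\]

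Now I would invoke the classical identity $\sum_{n\mid R}\mu(n) = [R=1]$. If $R=1$, the only divisor is $n=1$, which does not exceed $y\geq 1$, so $a_d=0$. If $R>1$, the full divisor sum vanishes, so complementing the range yields
\[
a_d \;=\; -\sum_{\substack{n \mid R \\ n \leq y}} \mu(n),
\]
and then $|a_d| \leq \#\{n\leq y\} = \lfloor y\rfloor \leq y$. The only real subtlety, and the reason a direct estimate of $|a_d|$ fails, is that the number of squarefree $n$ with $n^k\mid d$ can be as large as $2^{\omega(R)}$ and is in general unrelated to $y$; the trick is to pass to the complementary range $n \leq y$, where the trivial count gives exactly the bound $y$ after Möbius cancellation wipes out the full sum over divisors of $R$. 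No analytic input is needed, and the bound is essentially sharp (take $d$ with $R(d)$ equal to the product of the first few primes and inspect small cases).
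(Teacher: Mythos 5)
Your proof is correct and rests on the same key idea as the paper's: compute $a_d=\sum_{n^k\mid d,\, n>y}\mu(n)$ by Dirichlet convolution, then flip the range $n>y$ to $n\le y$ using the vanishing of the full M\"obius sum over divisors, and finish by the trivial count of integers up to $y$. Your formulation via the $k$-th radical $R(d)$ and the identity $\sum_{n\mid R}\mu(n)=[R=1]$ is a cleaner packaging of the paper's hands-on inclusion--exclusion over products of primes, but the argument is the same.
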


\begin{proof}
Observe that
\[
f_{y}(ks) = \sum_{n^{1/k}\in\N_{>y}} \frac{\mu\left(n^{1/k}\right)}{n^s}.
\]
Then one can use Dirichlet convolution to obtain
\[
a_d = \sum_{\substack{q|d \\ q^{1/k}\in\N_{>y}}} \mu\left(q^{1/k}\right).
\]
Trivially, $a_d=0$ if $d$ is $k$-free number. Take $d=\left(p_1^{\nu_1}\cdots p_{l}^{\nu_l}\right)^{k} m$, where $p_1,\ldots,p_l$ are distinct prime numbers and $m$ is $k$-free number. Then
\begin{flalign*}
a_d &= -\sum_{\substack{y<p_{j_1} \\ 1\leq j_1\leq l}} 1 + \sum_{\substack{y<p_{j_1}p_{j_2} \\ 1\leq j_1<j_2\leq l}} 1 - \sum_{\substack{y<p_{j_1}p_{j_2}p_{j_3} \\ 1\leq j_1<j_2<j_3\leq l}} 1 + \cdots
+ (-1)^{l}\sum_{\substack{y<p_{j_1}\cdots p_{j_l} \\ 1\leq j_1<\cdots<j_l\leq l}} 1 \\
&= -1 + \sum_{\substack{p_{j_1}\leq y \\ 1\leq j_1\leq l}} 1 - \sum_{\substack{p_{j_1}p_{j_2}\leq y \\ 1\leq j_1<j_2\leq l}} 1 + \sum_{\substack{p_{j_1}p_{j_2}p_{j_3}\leq y \\ 1\leq j_1<j_2<j_3\leq l}} 1 - \cdots - (-1)^{l}\sum_{\substack{p_{j_1}\cdots p_{j_l}\leq y \\ 1\leq j_1<\cdots<j_l\leq l}} 1.
\end{flalign*}
It follows that $\left|a_d\right|\leq 1+(y-1)=y$.
\end{proof}

Let $y$ be large enough and let $p_1,\ldots,p_l$ be all primes in the interval $\left(\sqrt{y},y\right]$. If $d=\left(p_1\cdots p_l\right)^{k}m$ for $m\in\N$, then the proof of Lemma~\ref{lem:zetafy} implies
\[
a_d = -1+l = -1+\pi\left(y\right)-\pi\left(\sqrt{y}\right) \gg \frac{y}{\log{y}}.
\]
This shows that the coefficients $a_d$ from the Dirichlet series for~\eqref{eq:zetafy} are not bounded by some absolute constant.

The outline of our proof of~\eqref{eq:kfreeMV} is now very similar to the proof of~\eqref{eq:TitchMobius}. We choose $c=1+1/\log{x}$ and $T=ex$, see Corollary~\ref{cor:Perron2}. Then we move the line of integration to an arbitrary $\sigma'\in(1/2,3/4]$ and derive bounds for the resulting integrals by using~\eqref{eq:corMain2} and Lemmas~\ref{lem:zetabound2} and~\ref{lem:fy}. Theorem~\ref{thm:MainkFree} will follow after observation that one can take $\sigma'\to 1/2$ in the final inequalities. Because we are using also Theorem~\ref{thm:MainMoebius}, the proof of~\eqref{eq:thmSquarefree} consists of optimizing $\sigma$ and $t_0$ from~\eqref{eq:corMain} while taking $t_0=2\gamma_1$ in~\eqref{eq:corMain2} in order to obtain desired values for $\beta$.

\section{Proof of Theorem~\ref{thm:main}}
\label{sec:MainBound}

Let $N(T)$ be the number of the nontrivial zeros $\rho=\beta+\ie\gamma$ of $\zeta(s)$ with $0<\gamma\leq T$. The Riemann--von Mangoldt formula asserts that
\begin{equation}
\label{eq:RvM}
N(T) = \frac{T}{2\pi}\log{\frac{T}{2\pi e}} + \frac{7}{8} + Q(T),
\end{equation}
where $Q(T)\de S(T)+R(T)$ with $S(T)\ll \log{T}$ and $R(T)\ll 1/T$, see~\cite[Section 9.3]{Titchmarsh} for details. By~\cite[Lemma 2]{BrentPlattTrudgian} it is known that $\left|R(T)\right|\leq \frac{1}{150T}$ for $T\geq 2\pi$. As usual, $\pi S(t)$ is the argument of $\zeta(s)$ on the critical line. Closely related function is
\[
S_1(t)\de \int_{0}^{t}S(u)\dif{u}
\]
for $t\geq0$. Unconditionally we also have $S_1(t)\ll\log{t}$. However, on RH better estimates are known and recently effective bounds were provided. Let
\[
\mathcal{M}(a,b,c;t)\de a + \frac{b}{\left(\log{t}\right)^{c}\log\log{t}}
\]
for some positive real numbers $a$, $b$ and $c$, and define
\begin{gather}
\mathcal{M}_1(t)\de \mathcal{M}(0.759282,20.1911,0.285;t), \label{eq:M1} \\
\mathcal{M}_2(t)\de \mathcal{M}(0.653,60.12,0.2705;t). \label{eq:M2}
\end{gather}
By~\cite{SimonicSonRH} we know that RH implies
\[
\left|S(t)\right|\leq \phi_1(t)\frac{\log{t}}{\log{\log{t}}}
\]
with
\begin{equation}
\label{eq:SExpl}
\phi_1(t) \de \left\{
\begin{array}{ll}
  0.96, & 2\pi\leq t < 10^{2465}, \\
  \mathcal{M}_1(t)+0.96-\mathcal{M}_1\left(10^{2465}\right), & t\geq 10^{2465},
\end{array}
\right.
\end{equation}
and
\[
\left|S_1(t)\right|\leq \phi_2(t)\frac{\log{t}}{\left(\log{\log{t}}\right)^2},
\]
with
\begin{equation}
\label{eq:S1Expl}
\phi_2(t) \de \left\{
\begin{array}{ll}
  2.491, & 2\pi\leq t < 10^{208}, \\
  \mathcal{M}_2(t)+2.491-\mathcal{M}_2\left(10^{208}\right), & t\geq 10^{208}.
\end{array}
\right.
\end{equation}
Observe that $\phi_1(t)$ and $\phi_2(t)$ are continuous and decreasing functions for $t\geq 2\pi$. Here some improvements may be possible. However, we should emphasize that for our proof to work we need to know analytic properties of some functions which include $\phi_1(t)$ and $\phi_2(t)$, e.g., that $\phi_2(t)\left(\log{\log{t}}\right)^{-2}\log{t}$ is an increasing function for $t\geq\exp{\left(e^2\right)}$.

The main result of this section is Theorem~\ref{thm:zetaMain} which its Corollary~\ref{cor:main} immediately implies Theorem~\ref{thm:main}. We are following~\cite{TitchConseq}. In the literature one can find two similar proofs of~\eqref{eq:TitchZetaBound} and~\eqref{eq:TitchZetaBound2}, namely~\cite[Theorem 14.14 (B)]{Titchmarsh} and~\cite[Theorem 13.23]{MontgomeryVaughan}. The former proof is closer to~\cite{TitchConseq} in the sense that it also relies on conditional estimates for $S(t)$ and $S_1(t)$, while in the latter proof these functions implicitly appear through properties of $\zeta'(s)/\zeta(s)$. We would also like to note that there is an error in~\cite[Section 14.10, p.~347]{Titchmarsh}; it is true that RH implies
\begin{equation}
\label{eq:problem}
\int_{\alpha+\ie T}^{2+\ie T}\frac{\log{\zeta(z)}}{z-s}\dif{z} = O\left(\frac{\log{T}}{t-T}\right),
\end{equation}
uniformly for $1/2<\alpha<2$ and $T<t$, but~\eqref{eq:problem} does not follow from~\eqref{eq:someEstimateForZeta} because this estimate is not uniform in $\sigma$. Instead one should consider the (unconditional) estimate
\[
\log{\zeta(s)} = \sum_{\left|t-\gamma\right|\leq 1} \log{\left(s-\rho\right)} + O\left(\log{t}\right),
\]
which is uniform in $\sigma\in[-1,2]$, see~\cite[Theorem 9.6 (B)]{Titchmarsh}. On RH we then have
\[
\int_{\alpha}^{2}\left|\log{\zeta\left(u+\ie T\right)}\right|\dif{u} \leq \sum_{\left|t-\gamma\right|\leq 1} \int_{\alpha}^{2} \left|\log{\left(u-\frac{1}{2}+\ie\left(T-\gamma\right)\right)}\right|\dif{u} + O\left(\log{T}\right),
\]
and the right-hand side of the above inequality can be easily seen $\ll \log{T}$.

The main idea in Titchmarsh's older proof is to write $\log{\zeta(s)}$ as the integral of $Q(u)$. This is achieved by using Hadamard's factorization theorem for $\xi\left(1/2+\ie z\right)$, the Riemann--von Mangoldt formula~\eqref{eq:RvM}, and also Stirling's formula, see the following lemma.

\begin{lemma}
\label{lem:logzeta}
Assume the Riemann Hypothesis. Let $s=\sigma+\ie t$ with $1/2<\sigma\leq 3/2$ and $t\geq t_0>\gamma_1$. Then
\begin{equation}
\label{eq:logzeta}
\log{\left|\zeta(s)\right|} = \Re\left\{2\left(s-\frac{1}{2}\right)^2\int_{\gamma_1}^{\infty}\frac{Q(u)}{u\left(u^2+\left(s-\frac{1}{2}\right)^2\right)}\dif{u}\right\} + R_1,
\end{equation}
where
\begin{flalign}
\label{eq:R1}
\left|R_1\right| \leq \mathcal{R}_1\left(t_0\right) &\de \left|\log{\left|\xi\left(\frac{1}{2}\right)\right|}\right| + \frac{1}{4}\log{\frac{2e}{\pi}} + \frac{\delta\left(t_0\right)}{\pi}\int_{0}^{\gamma_1}\frac{\left|\log{\left(2\pi e/u\right)}\right|}{1-\left(u/t_0\right)^2}\dif{u} \nonumber \\
&+ \frac{7}{8}\left|\log{\left(\frac{1}{\gamma_1^2}-\frac{1}{t_0^2}\right)}\right| + \frac{7+2t_0}{4}\log{\frac{2t_0}{2t_0-1}} + \frac{1}{6t_0} + \frac{4}{45t_0^3}
\end{flalign}
and
\begin{equation}
\label{eq:delta}
\delta\left(t_0\right)\de \left(1+\frac{1}{t_0}\right)^2.
\end{equation}
\end{lemma}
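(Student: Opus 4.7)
The plan is to derive \eqref{eq:logzeta} from Hadamard's factorization of $\xi$ and the Riemann--von Mangoldt formula, collecting all $s$-independent errors into $R_1$.

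First, on RH the function $z\mapsto \xi(1/2+\ie z)$ is an even entire function of order one with only real zeros $\pm\gamma$, so it has the genus-zero expansion $\xi(1/2+\ie z)=\xi(1/2)\prod_{\gamma>0}(1-z^2/\gamma^2)$. Setting $z=-\ie w$ with $w:=s-1/2$, taking real parts of the logarithm, and using $\xi(s)=\tfrac{1}{2}s(s-1)\pi^{-s/2}\Gamma(s/2)\zeta(s)$ to isolate $\log|\zeta(s)|$ gives
\[
\log|\zeta(s)|=\log|\xi(1/2)|+\sum_{\gamma>0}\log\bigl|1+w^2/\gamma^2\bigr|-\log\bigl|\tfrac{s(s-1)}{2}\bigr|+\tfrac{\sigma}{2}\log\pi-\Re\log\Gamma(s/2).
\]
Writing the sum as a Riemann--Stieltjes integral against $dN(u)$ and integrating by parts---the boundary terms vanish since $N=0$ on $[0,\gamma_1)$ and $N(u)\log(1+w^2/u^2)\to 0$ as $u\to\infty$---converts it into $\Re\{2w^2\int_{\gamma_1}^{\infty}N(u)/(u(u^2+w^2))\,du\}$. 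Substituting \eqref{eq:RvM} splits this as the $Q$-integral of \eqref{eq:logzeta} plus an $M$-integral, where $M(u):=(u/2\pi)\log(u/2\pi e)+7/8$.

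Second, I would evaluate the $M$-integral. Partial fractions give $\int_{\gamma_1}^{\infty}du/(u(u^2+w^2))=(1/(2w^2))\log(1+w^2/\gamma_1^2)$, so the constant $7/8$ contributes $(7/8)\log|1+w^2/\gamma_1^2|$. For the $\log(u/2\pi e)$-piece, extending integration to $(0,\infty)$ and applying the classical identity $\int_0^{\infty}\log(u/a)/(u^2+w^2)\,du=(\pi/(2w))\log(w/a)$ (valid for $\Re w>0$, $a>0$) yields $\Re[(w/2)\log(w/(2\pi e))]$, up to the $(0,\gamma_1)$-correction. Bounding the correction via $|u^2+w^2|\ge t^2-u^2\ge t_0^2(1-(u/t_0)^2)$ and $|w|^2\le(1+1/t_0)^2 t^2=\delta(t_0)t^2$ gives precisely $(\delta(t_0)/\pi)\int_0^{\gamma_1}|\log(2\pi e/u)|/(1-(u/t_0)^2)\,du$, the third term of $\mathcal{R}_1(t_0)$.

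Finally, I would apply Stirling's formula with explicit remainder to match $-\Re\log\Gamma(s/2)+(\sigma/2)\log\pi-\log|s(s-1)/2|$ against $\Re[(w/2)\log(w/(2\pi e))]+(7/8)\log|1+w^2/\gamma_1^2|$. The principal Stirling identity absorbs the leading terms, leaving the explicit constant $\tfrac14\log(2e/\pi)$ and a Stirling tail bounded by $1/(6t_0)+4/(45t_0^3)$; the quantity $\log|1+w^2/\gamma_1^2|$ is compared to its value at $t=t_0$ using monotonicity in $t$, producing the $(7/8)|\log(1/\gamma_1^2-1/t_0^2)|$ contribution; and bounding the residual discrepancies between $\log|s|,\log|s-1|$ and $\log t$ on $t\ge t_0$, $\sigma\le 3/2$ yields the $(7+2t_0)/4\cdot\log(2t_0/(2t_0-1))$ summand. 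The chief technical difficulty lies in this final bookkeeping, where every explicit constant and logarithm branch must be tracked carefully so that the total error assumes precisely the shape \eqref{eq:R1}; the other steps are algebraic manipulations of classical identities.
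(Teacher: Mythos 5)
Your approach is the same as the paper's: Hadamard factorization of $\xi(1/2+\ie z)$, integration against $N(u)$ by parts, substitution of the Riemann--von Mangoldt formula, and Stirling's formula with explicit remainder to match the algebraic side — the paper works with $z=-\ie(s-1/2)$ rather than $w=s-1/2$, but since $z^2=-w^2$ this is purely notational. One small slip: in bounding the $(0,\gamma_1)$-correction you write $|u^2+w^2|\ge t^2-u^2\ge t_0^2(1-(u/t_0)^2)$; that chain leaves an unbounded factor $t^2/t_0^2$ once combined with $|w|^2\le\delta(t_0)t^2$ — you need $t^2-u^2\ge t^2(1-(u/t_0)^2)$ (true since $u/t\le u/t_0$), which then cancels the $t^2$ and produces exactly the stated term of $\mathcal{R}_1(t_0)$, as in the paper.
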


\begin{proof}
Define $z\de -\ie\left(s-1/2\right)$ and take
\begin{equation}
\label{eq:Xi}
\Xi(z)\de\xi\left(\frac{1}{2}+\ie z\right), \quad \xi(s) \de \frac{1}{2}s(s-1)\pi^{-\frac{s}{2}}\Gamma\left(\frac{s}{2}\right)\zeta(s).
\end{equation}
Let $0<\gamma_1\leq\gamma_2\leq\cdots\leq\gamma_n\leq\cdots$ denote the ordinates of the nontrivial zeros in the upper half-plane. By Hadamard's factorization theorem it follows that
\[
\Xi(z) = \xi\left(\frac{1}{2}\right)\prod_{n=1}^{\infty}\left(1-\frac{z^2}{\gamma_n^2}\right).
\]
Therefore,
\begin{equation}
\label{eq:logXi1}
\log{\left|\Xi(z)\right|} = \log{\left|\xi\left(\frac{1}{2}\right)\right|} + \Re\left\{I_1\right\}, \quad
I_1 \de \int_{\gamma_1}^{\infty}\frac{2z^2N(u)}{u\left(z^2-u^2\right)}\dif{u},
\end{equation}
see~\cite[p.~249]{TitchConseq} for details. Writing
\begin{gather*}
O_1 \de -\frac{z^2}{\pi}\int_{0}^{\gamma_1}\frac{\log{u}}{z^2-u^2}\dif{u} + \frac{z^2\log{\left(2\pi e\right)}}{\pi}\int_{0}^{\gamma_1}\frac{\dif{u}}{z^2-u^2}, \\
O_2 \de - \frac{7}{4}\log{z} + \frac{7}{4}\int_{\gamma_1}^{\infty}\frac{z^2\dif{u}}{u\left(z^2-u^2\right)},
\end{gather*}
by~\eqref{eq:RvM} it follows that
\begin{equation}
\label{eq:I1}
I_1 = \int_{\gamma_1}^{\infty}\frac{2z^2 Q(u)}{u\left(z^2-u^2\right)}\dif{u} + \frac{\ie}{2}z\log{z} - \frac{\pi}{4}z - \frac{\ie \log{\left(2\pi e\right)}}{2}z + \frac{7}{4}\log{z} + O_1 + O_2,
\end{equation}
see~\cite[p.~249]{TitchConseq} for details. Because
\begin{equation}
\label{eq:boundsz}
t\leq |z|\leq t\left(1+\frac{1}{t_0}\right), \quad \left|z^2-u^2\right|\geq \left|u^2-t^2\right|,
\end{equation}
we have
\[
\left|\Re\left\{O_1\right\}\right| \leq \left|O_1\right| \leq \frac{1}{\pi}\delta\left(t_0\right)\int_{0}^{\gamma_1}\frac{\left|\log{\left(2\pi e/u\right)}\right|}{1-\left(u/t_0\right)^2}\dif{u}.
\]
Also,
\[
\int_{\gamma_1}^{\infty}\frac{z^2\dif{u}}{u\left(z^2-u^2\right)} = \frac{1}{2}\log{\left(1-\frac{z^2}{\gamma_1^2}\right)},
\]
which implies
\[
\left|\Re\left\{O_2\right\}\right| = \frac{7}{8}\left|\log{\left|\frac{1}{z^2}-\frac{1}{\gamma_1^2}\right|}\right| \leq \frac{7}{8}\left|\log{\left(\frac{1}{\gamma_1^2}-\frac{1}{t_0^2}\right)}\right|.
\]
On the other hand, by \eqref{eq:Xi} we have
\begin{multline}
\label{eq:logXi12}
\log{\left|\Xi(z)\right|} = \log{\frac{1}{2}} - \frac{\log{\pi}}{4} + \log{\left|\frac{1}{2}+\ie z\right|} + \log{\left|-\frac{1}{2}+\ie z\right|} \\
- \Re\left\{\ie\frac{\log{\pi}}{2}z\right\} + \log{\left|\Gamma\left(\frac{1}{4}+\ie\frac{z}{2}\right)\right|} + \log{\left|\zeta(s)\right|}.
\end{multline}
We can write
\[
\log{\left(\frac{1}{2}+\ie z\right)} = \log{z} + \frac{\pi}{2}\ie + O_3(z), \quad O_3(z)\de \log{\left(1+\frac{1}{2\ie z}\right)}.
\]
Taking $w=1/\left(2\ie z\right)$, we obtain
\begin{gather*}
\left|O_3(z)\right| \leq \log{\frac{1}{1-|w|}} \leq \log{\frac{2t_0}{2t_0-1}}, \\
\left|zO_3(z)\right|\leq \frac{1}{2|w|}\log{\frac{1}{1-|w|}}\leq t_0\log{\frac{2t_0}{2t_0-1}}
\end{gather*}
since $|w|\leq 1/\left(2t_0\right)$, and the former bound is also true for $\left|O_3(-z)\right|$. By Stirling's formula for $\log{\Gamma(z)}$ with an explicit error term, see~\cite[p.~294]{Olver}, equality \eqref{eq:logXi12} implies
\begin{flalign}
\label{eq:logXi2}
\log{\left|\Xi(z)\right|} &= \log{\left|\zeta(s)\right|} + \Re\left\{\frac{\ie}{2}z\log{z} - \frac{\pi}{4}z - \frac{\ie \log{\left(2\pi e\right)}}{2}z + \frac{7}{4}\log{z}\right\} - \frac{1}{4}\log{\frac{2e}{\pi}} \nonumber \\
&+ \Re\left\{\frac{3}{4}O_3(z) + O_3(-z) + \frac{\ie}{2}zO_3(z) + O_4\left(\frac{1}{4}+\ie \frac{z}{2}\right)\right\},
\end{flalign}
where
\[
\left|O_4\left(\frac{1}{4}+\ie \frac{z}{2}\right)\right| \leq \frac{1}{6t_0} + \frac{4}{45t_0^3}.
\]
Comparing \eqref{eq:logXi1} with \eqref{eq:logXi2} while using \eqref{eq:I1} gives \eqref{eq:logzeta} with
\begin{multline*}
R_1 = \log{\left|\xi\left(\frac{1}{2}\right)\right|} + \frac{1}{4}\log{\frac{2e}{\pi}} \\
+ \Re\left\{O_1 + O_2 - \frac{3}{4}O_3(z) - O_3(-z) - \frac{\ie}{2}zO_3(z) - O_4\left(\frac{1}{4}+\ie \frac{z}{2}\right)\right\}.
\end{multline*}
After collecting all bounds for the error terms, the final result easily follows.
\end{proof}

The next step in the proof is to restrict the range of integration in~\eqref{eq:logzeta} to some neighbourhood of $t$.

\begin{lemma}
\label{lem:logzeta2}
Assume the Riemann Hypothesis. Let $s=\sigma+\ie t$ with $1/2<\sigma\leq 3/2$, $t\geq\max\left\{2\exp{\left(e^2\right)},t_0\right\}$, $t_0\geq2\gamma_1$, and $0<\xi\leq t_0/2$. Then
\begin{equation}
\label{eq:logzeta2}
\log{\left|\zeta(s)\right|} = \Re\left\{2\left(s-\frac{1}{2}\right)^2\int_{t-\xi}^{t+\xi}\frac{Q(u)}{u\left(u^2+\left(s-\frac{1}{2}\right)^2\right)}\dif{u}\right\} + R_2,
\end{equation}
where
\begin{flalign}
\label{eq:R2}
\left|R_2\right| &\leq \delta\left(t_0\right)\left(\frac{85}{4}+3\delta\left(t_0\right)\right)\frac{\phi_2(3t/2)\log{(3t/2)}}{\xi\left(\log{\log{(3t/2)}}\right)^2} \nonumber \\
&+ 2\delta\left(t_0\right)\left(4+\left(3+\delta\left(t_0\right)\right)\left(\frac{t_0}{2e^{e^2}}+\frac{3+e}{2e}\right)\right)\frac{\phi_2(t)\log{t}}{\xi\left(\log{\log{t}}\right)^2} \nonumber \\
&+ \frac{\delta\left(t_0\right)}{75}\left(1+\frac{1}{e}+\frac{t_0}{2\gamma_1}\right)\frac{1}{\xi} + \frac{2\left(1+t_0\right)^{2}\left|S_1\left(\gamma_1\right)\right|}{\gamma_1\left(t_0^2-\gamma_1^2\right)} + 0.81\delta\left(t_0\right)\left(3+\delta\left(t_0\right)\right) \nonumber \\
&+ \mathcal{R}_1\left(t_0\right) + \frac{4\delta\left(t_0\right)}{5e}\left(3+\frac{\delta\left(t_0\right)}{4}\right)\frac{\phi_2(3t/2)}{\left(\log{\log{(3t/2)}}\right)^2},
\end{flalign}
$\delta\left(t_0\right)$ is defined by~\eqref{eq:delta} and $\mathcal{R}_1\left(t_0\right)$ is from~\eqref{eq:R1}.
\end{lemma}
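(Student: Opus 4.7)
The plan is to take the integral representation from Lemma~\ref{lem:logzeta} as the starting point. Setting
\[
G(u) \de \frac{2\left(s-\frac{1}{2}\right)^2}{u\left(u^2+\left(s-\frac{1}{2}\right)^2\right)},
\]
we split $\int_{\gamma_1}^{\infty} Q(u) G(u)\,\dif{u}$ as $\int_{\gamma_1}^{t-\xi} + \int_{t-\xi}^{t+\xi} + \int_{t+\xi}^{\infty}$. The middle piece is precisely the integral appearing in~\eqref{eq:logzeta2}; the two outer pieces together with the old error $R_1$ make up $R_2$. The task is therefore to bound the two outer integrals by the remaining terms in~\eqref{eq:R2}.

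For each outer piece I would write $Q = S + R$. The $R$-contribution is bounded directly via $\left|R(u)\right| \leq 1/(150u)$ together with the pointwise estimate $\left|G(u)\right| \leq 2\delta(t_0)t^2/(u\left|u^2-t^2\right|)$, a consequence of $\left|u^2+(s-1/2)^2\right|\geq\left|u^2-t^2\right|$ (see~\eqref{eq:boundsz}) and $\left|s-1/2\right|^2\leq\delta(t_0)t^2$; the resulting elementary integrals yield the $\delta(t_0)/(75\xi)$-type term in~\eqref{eq:R2}. For the $S$-contribution I would integrate by parts using $S=S_1'$:
\[
\int_{a}^{b} S(u) G(u)\,\dif{u} = \bigl[S_1(u) G(u)\bigr]_{a}^{b} - \int_{a}^{b} S_1(u) G'(u)\,\dif{u}.
\]
On the tail $[t+\xi,\infty)$ the boundary at infinity vanishes because $G(u)=O(u^{-3})$ while $\left|S_1(u)\right|$ grows only logarithmically; at $u=\gamma_1$, using $\left|s-1/2\right|^2 \leq (1+t)^2$ and the monotonicity in $t$ of $(1+t)^2/(t^2-\gamma_1^2)$, the boundary $\left|S_1(\gamma_1) G(\gamma_1)\right|$ is bounded by $2(1+t_0)^2\left|S_1(\gamma_1)\right|/[\gamma_1(t_0^2-\gamma_1^2)]$, which accounts for the $S_1(\gamma_1)$-term in~\eqref{eq:R2}.

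The remaining boundary terms at $u=t\pm\xi$ and the inner integrals $\int S_1(u) G'(u)\,\dif{u}$ would be estimated using~\eqref{eq:S1Expl} together with the hypothesis that $\phi_2(u)\log u/(\log\log u)^2$ is increasing for $u\geq\exp(e^2)$. Since $\xi\leq t_0/2\leq t/2$ we have $t\pm\xi\in[t/2,3t/2]$; combined with $\left|G(t\pm\xi)\right|\leq 8\delta(t_0)/(3\xi)$ this produces the $\delta(t_0)\phi_2(3t/2)\log(3t/2)/[\xi(\log\log(3t/2))^2]$ contribution. For the inner integrals I would subdivide each outer range into a near-regime (where $\left|u-t\right|$ is moderate, using $\left|u^2-t^2\right|\geq 2t\xi$) and a far-regime (where $u\geq 3t/2$, so $\left|u^2-t^2\right|\geq u^2/2$ and $\left|G'(u)\right|=O(u^{-4})$ ensures absolute convergence). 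The monotonicity of $\phi_2(u)\log u/(\log\log u)^2$ then extracts the correct $\phi_2(3t/2)$ or $\phi_2(t)$ prefactor on each subregion, giving the rest of~\eqref{eq:R2}.

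The main obstacle is bookkeeping: tracking the explicit constants ($85/4$, $4/(5e)$, $1/75$, etc.) requires keeping $\left|G'(u)\right|$ in a form that integrates to $1/\xi$ rather than $1/\xi^2$, which forces a careful decomposition of $G'(u)$ according to the relative size of $u$ and $t$. Verifying monotonicity of $\phi_2(u)\log u/(\log\log u)^2$ on each subregion, and evaluating the integrals of $1/(\log\log u)^2$ with sharp enough constants, is delicate but routine. Combining everything with $\mathcal{R}_1(t_0)$ inherited from Lemma~\ref{lem:logzeta} then yields~\eqref{eq:R2}.
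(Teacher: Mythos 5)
Your plan reproduces the paper's argument step for step: start from Lemma~\ref{lem:logzeta}, split the integral over $(\gamma_1,\infty)$ into the window $(t-\xi,t+\xi)$ plus two tails, write $Q=S+R$, bound the $R$-piece directly using $\left|R(u)\right|\leq 1/(150u)$, and integrate the $S$-piece by parts with $S_1$ as antiderivative, subdividing the inner integrals to exploit the monotonicity of $\phi_2(u)\log u/(\log\log u)^2$. The boundary-term analysis (vanishing at infinity, the $S_1(\gamma_1)$ term via the decreasing function $(1+t)^2/(t^2-\gamma_1^2)$, and the $t\pm\xi$ terms) all matches the paper.

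There is, however, one real omission. For the left tail $\int_{\gamma_1}^{t-\xi}$, the inner integral $\int S_1(u)\,G'(u)\,\dif{u}$ runs from $\gamma_1$ upward, and $\gamma_1<e^{e^2}$; on $[\gamma_1,e^{e^2}]$ the function $\phi_2(u)\log u/(\log\log u)^2$ is \emph{not} known to be increasing — you correctly note the hypothesis only applies for $u\geq\exp(e^2)$ — so the ``extract the prefactor by monotonicity'' step fails there. The paper makes an extra cut at $u=e^{e^2}$ and bounds the lower segment $\int_{\gamma_1}^{e^{e^2}}$ by a direct numerical evaluation (their eq.~\eqref{eq:int111}), which is precisely what produces the $0.81\,\delta(t_0)\left(3+\delta(t_0)\right)$ constant term in~\eqref{eq:R2}. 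Your near/far split around $3t/2$ handles the right tail, but on the left tail you also need this cut at $e^{e^2}$; without it, that term in $\left|R_2\right|$ has no source in your plan and the accounting cannot close.
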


\begin{proof}
Remembering that $Q(u)=S(u)+R(u)$, we can write
\begin{multline}
\label{eq:settingLemLogzeta2}
\int_{\gamma_1}^{\infty}\frac{Q(u)\dif{u}}{u\left(u^2+\left(s-\frac{1}{2}\right)^2\right)} = \int_{t-\xi}^{t+\xi}\frac{Q(u)\dif{u}}{u\left(u^2+\left(s-\frac{1}{2}\right)^2\right)} \\
+ \left(\int_{\gamma_1}^{t-\xi}+\int_{t+\xi}^{\infty}\right)\frac{S(u)+R(u)}{u\left(u^2+\left(s-\frac{1}{2}\right)^2\right)}\dif{u}.
\end{multline}
The idea is to apply Lemma~\ref{lem:logzeta} on~\eqref{eq:settingLemLogzeta2}. We will estimate the modulus of the last two integrals by separating two cases which correspond to functions $S(u)$ and $R(u)$. Note that $t-\xi\geq t/2$ and $t+\xi\leq 3t/2$. We are also using estimates~\eqref{eq:boundsz}.

In the case of $R(u)$, we obtain
\begin{equation}
\label{eq:intR}
\left|\left(\int_{\gamma_1}^{t-\xi}+\int_{t+\xi}^{\infty}\right)\frac{R(u)\dif{u}}{u\left(u^2+\left(s-\frac{1}{2}\right)^2\right)}\right| \leq \frac{1}{150}\left(1+\frac{1}{e}+\frac{t_0}{2\gamma_1}\right)\frac{1}{t^2\xi},
\end{equation}
because
\[
0 < \int_{\gamma_1}^{t-\xi}\frac{\dif{u}}{u^2\left(1-\left(\frac{u}{t}\right)^2\right)} + \int_{t+\xi}^{\infty}\frac{\dif{u}}{u^2\left(\left(\frac{u}{t}\right)^2-1\right)} \leq \left(1+\frac{1}{e}+\frac{\xi}{\gamma_1}\right)\frac{1}{\xi}.
\]
The last inequality follows by exact integration, and by the inequalities $\log{x}\leq x/e$ and $\log{(1+x)}\leq x$ which are valid for $x>0$. Such simple global inequalities are good enough for our purpose.

In the case of $S(t)$, we will consider each integral in brackets in~\eqref{eq:settingLemLogzeta2} separately. Integration by parts implies
\begin{flalign}
\label{eq:int1}
\int_{\gamma_1}^{t-\xi}\frac{S(u)\dif{u}}{u\left(u^2+\left(s-\frac{1}{2}\right)^2\right)} &= \frac{S_1\left(t-\xi\right)}{\left(t-\xi\right)\left(\left(t-\xi\right)^2+\left(s-\frac{1}{2}\right)^2\right)} - \frac{S_1\left(\gamma_1\right)}{\gamma_1\left(\gamma_1^2+\left(s-\frac{1}{2}\right)^2\right)} \nonumber \\
&+\int_{\gamma_1}^{t-\xi}\frac{3u^2+\left(s-\frac{1}{2}\right)^2}{u^2\left(u^2+\left(s-\frac{1}{2}\right)^2\right)^2}S_1(u)\dif{u}.
\end{flalign}
Note that $\phi_{j}(u)\left(\log{\log{u}}\right)^{-j}\log{u}$ is for $j\in\{1,2\}$ and $u\geq e^{e^2}$ an increasing function. We are going to split the range of integration in the second integral in~\eqref{eq:int1} into two parts:
\begin{multline}
\label{eq:int111}
\left|\int_{\gamma_1}^{e^{e^2}}\frac{3u^2+\left(s-\frac{1}{2}\right)^2}{u^2\left(u^2+\left(s-\frac{1}{2}\right)^2\right)^2}S_1(u)\dif{u}\right| \leq
\frac{1}{t^2}\left(3+\delta\left(t_0\right)\right)\times \\
\times\int_{\gamma_1}^{e^{e^2}}\frac{2.491\log{u}\dif{u}}{u^2\left(1-\left(\frac{u}{2e^{e^2}}\right)^2\right)^2\left(\log{\log{u}}\right)^2} \leq
\frac{0.405\left(3+\delta\left(t_0\right)\right)}{t^2}
\end{multline}
and
\begin{multline}
\label{eq:int11}
\left|\int_{e^{e^2}}^{t-\xi}\frac{3u^2+\left(s-\frac{1}{2}\right)^2}{u^2\left(u^2+\left(s-\frac{1}{2}\right)^2\right)^2}S_1(u)\dif{u}\right| \leq \frac{\left(3+\delta\left(t_0\right)\right)\phi_2(t)\log{t}}{t^2\left(\log{\log{t}}\right)^2}\times \\
\times\int_{e^{e^2}}^{t-\xi}\frac{\dif{u}}{u^2\left(1-\left(\frac{u}{t}\right)^2\right)^2} \leq
\left(3+\delta\left(t_0\right)\right)\left(\frac{t_0}{2e^{e^{2}}}+\frac{1}{2}+\frac{3}{2e}\right)\frac{\phi_2(t)\log{t}}{t^2\xi\left(\log{\log{t}}\right)^2}.
\end{multline}
Observe that we used~\eqref{eq:S1Expl} in estimation~\eqref{eq:int111}. Also,
\begin{gather}
\frac{\left|S_1\left(t-\xi\right)\right|}{\left(t-\xi\right)\left|\left(t-\xi\right)^2+\left(s-\frac{1}{2}\right)^2\right|} \leq \left(\frac{t}{t-\xi}\right)^{2}\frac{\phi_2(t)\log{t}}{t^{2}\xi\left(\log{\log{t}}\right)^2} \leq \frac{4\phi_2(t)\log{t}}{t^{2}\xi\left(\log{\log{t}}\right)^2}, \label{eq:int12} \\
\frac{\left|S_1\left(\gamma_1\right)\right|}{\gamma_1\left|\gamma_1^2+\left(s-\frac{1}{2}\right)^2\right|} \leq \frac{t_0^2\left|S_1\left(\gamma_1\right)\right|}{t^2\gamma_1\left(t_0^2-\gamma_1^2\right)}. \label{eq:int13}
\end{gather}
Considering the second integral in brackets in~\eqref{eq:settingLemLogzeta2}, integration by parts implies
\begin{flalign}
\label{eq:int2}
\int_{t+\xi}^{\infty}\frac{S(u)\dif{u}}{u\left(u^2+\left(s-\frac{1}{2}\right)^2\right)} &= -\frac{S_1\left(t+\xi\right)}{\left(t+\xi\right)\left(\left(t+\xi\right)^2+\left(s-\frac{1}{2}\right)^{2}\right)} \nonumber \\
&+ \left(\int_{t+\xi}^{3t/2}+\int_{3t/2}^{\infty}\right)\frac{3u^2+\left(s-\frac{1}{2}\right)^2}{u^2\left(u^2+\left(s-\frac{1}{2}\right)^2\right)^2}S_1(u)\dif{u}.
\end{flalign}
Similarly as before, bounds on the moduli of the last two integrals in~\eqref{eq:int2} are
\begin{multline}
\label{eq:int21}
\left|\int_{t+\xi}^{3t/2}\frac{3u^2+\left(s-\frac{1}{2}\right)^2}{u^2\left(u^2+\left(s-\frac{1}{2}\right)^2\right)^2}S_1(u)\dif{u}\right| \leq
\frac{\left(\frac{27}{4}+\delta\left(t_0\right)\right)\phi_2(3t/2)\log{(3t/2)}}{t^2\left(\log{\log{(3t/2)}}\right)^2}\times \\
\times\int_{t+\xi}^{3t/2}\frac{\dif{u}}{u^2\left(\left(\frac{u}{t}\right)^2-1\right)^2} \leq \frac{3}{2}\left(\frac{27}{4}+\delta\left(t_0\right)\right)\frac{\phi_2(3t/2)\log{(3t/2)}}{t^2\xi\left(\log{\log{(3t/2)}}\right)^2}
\end{multline}
and, because $\phi_2(u)\left(\log{\log{u}}\right)^{-2}$ is for $u\geq2\pi$ a decreasing function,
\begin{multline}
\label{eq:int22}
\left|\int_{3t/2}^{\infty}\frac{3u^2+\left(s-\frac{1}{2}\right)^2}{u^2\left(u^2+\left(s-\frac{1}{2}\right)^2\right)^2}S_1(u)\dif{u}\right| \leq
\frac{1}{e}\left(3+\frac{\delta\left(t_0\right)}{4}\right)\frac{\phi_2(3t/2)}{\left(\log{\log{(3t/2)}}\right)^2}\times \\ \times\int_{3t/2}^{\infty}\frac{u\dif{u}}{\left(u^2-t^2\right)^2} \leq \frac{2}{5e}\left(3+\frac{\delta\left(t_0\right)}{4}\right)\frac{\phi_2(3t/2)}{t^2\left(\log{\log{(3t/2)}}\right)^2}.
\end{multline}
In~\eqref{eq:int22} we used $\log{u}\leq u/e$. Also,
\begin{equation}
\label{eq:int23}
\frac{\left|S_1\left(t+\xi\right)\right|}{\left(t+\xi\right)\left|\left(t+\xi\right)^2+\left(s-\frac{1}{2}\right)^{2}\right|} \leq \frac{\phi_2(3t/2)\log{(3t/2)}}{2t^2\xi\left(\log{\log{(3t/2)}}\right)^2}.
\end{equation}
Multiplying~\eqref{eq:settingLemLogzeta2} by $2\left(s-1/2\right)^2$, applying Lemma~\ref{lem:logzeta}, and using bounds~\eqref{eq:int111}, \eqref{eq:int11}, \eqref{eq:int12}, \eqref{eq:int13} in~\eqref{eq:int1}, and~\eqref{eq:int21}, \eqref{eq:int22}, \eqref{eq:int23} in~\eqref{eq:int2}, and~\eqref{eq:intR}, finally gives~\eqref{eq:logzeta2} with~\eqref{eq:R2}. The proof of Lemma~\ref{lem:logzeta2} is thus complete.
\end{proof}

\begin{theorem}
\label{thm:zetaMain}
Assume the Riemann Hypothesis. Let $s=\sigma+\ie t$ with $1/2<\sigma\leq 3/2$, $t\geq\max\left\{2\exp{\left(e^2\right)},t_0\right\}$, $t_0\geq2\gamma_1$, and $0<\lambda\leq \left(t_0/2\right)\log{\log{\left(3t_0/2\right)}}$. Then
\begin{flalign}
\label{eq:abslogzeta}
\left|\log{\left|\zeta(s)\right|}\right| &\leq 2\left(\frac{\phi_1(3t/2)\log{(3t/2)}}{\log{\log{(3t/2)}}}+\frac{1}{75t}\right)\log{\left(1+\frac{2\lambda}{\left(\sigma-\frac{1}{2}\right)\log{\log{(3t/2)}}}\right)} \nonumber \\
&+ \Omega\left(t_0,\lambda,t\right)
\end{flalign}
and
\begin{equation}
\label{eq:abszeta}
\log{\left|\zeta(s)\right|} \leq \frac{\lambda\log{(3t/2)}}{\pi\log{\log{(3t/2)}}} + \Omega\left(t_0,\lambda;t\right),
\end{equation}
where
\begin{flalign*}
\Omega\left(t_0,\lambda;t\right) &\de \frac{a_1\left(t_0\right)\phi_2(3t/2)\log{(3t/2)}}{\lambda\log{\log{(3t/2)}}} + \frac{a_2\left(t_0\right)\log{\log{(3t/2)}}}{\lambda} + a_3\left(t_0\right)\\
&+ 12\left(\frac{\phi_1\left(t/2\right)\log{\left(t/2\right)}}{t\log{\log{\left(t/2\right)}}}+\frac{1}{75t^2}\right)
\frac{\lambda}{\log{\log{(3t/2)}}} + \frac{a_4\left(t_0\right)\phi_2(3t/2)}{\left(\log{\log{(3t/2)}}\right)^2},
\end{flalign*}
and $\phi_1$ and $\phi_2$ are defined by~\eqref{eq:SExpl} and~\eqref{eq:S1Expl}, respectively,
\begin{equation}
\label{eq:a1}
a_1\left(t_0\right)\de \delta\left(t_0\right)\left(\frac{117}{4}+3\delta\left(t_0\right)
+\left(3+\delta\left(t_0\right)\right)\left(\frac{t_0}{e^{e^2}}+\frac{3+e}{e}\right)\right),
\end{equation}
\begin{equation}
\label{eq:a2}
a_2\left(t_0\right)\de \frac{\delta\left(t_0\right)}{75}\left(\frac{t_0}{2\gamma_1}+1+\frac{1}{e}\right),
\end{equation}
\begin{equation}
\label{eq:a3}
a_3\left(t_0\right)\de \frac{3\left(1+t_0\right)^{2}}{\gamma_1\left(t_0^2-\gamma_1^2\right)} + 0.81\delta\left(t_0\right)\left(3+\delta\left(t_0\right)\right) + \mathcal{R}_1\left(t_0\right) ,
\end{equation}
\begin{equation}
\label{eq:a4}
a_4\left(t_0\right)\de \frac{4\delta\left(t_0\right)}{5e}\left(3+\frac{\delta\left(t_0\right)}{4}\right),
\end{equation}
$\delta\left(t_0\right)$ is defined by~\eqref{eq:delta} and $\mathcal{R}_1\left(t_0\right)$ is from~\eqref{eq:R1}.
\end{theorem}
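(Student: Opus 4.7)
The plan is to start from Lemma~\ref{lem:logzeta2}, which for any admissible $\xi$ gives
\[
\log|\zeta(s)| = \Re\left\{2\left(s-\tfrac{1}{2}\right)^{2}\int_{t-\xi}^{t+\xi}\frac{Q(u)\,du}{u\left(u^{2}+(s-\tfrac{1}{2})^{2}\right)}\right\} + R_{2}
\]
with $|R_{2}|$ controlled by~\eqref{eq:R2}. I would choose $\xi := \lambda/\log\log(3t/2)$; the hypothesis $\lambda \leq (t_{0}/2)\log\log(3t_{0}/2)$ combined with $\log\log(3t/2) \geq \log\log(3t_{0}/2)$ (since $t \geq t_{0}$) ensures $0 < \xi \leq t_{0}/2$, so Lemma~\ref{lem:logzeta2} applies.

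The kernel is next decomposed via partial fractions. With $\alpha := \sigma - 1/2$, one verifies
\[
\Re\left\{\frac{2(s-\tfrac{1}{2})^{2}}{u(u^{2}+(s-\tfrac{1}{2})^{2})}\right\} = \frac{2}{u} - \frac{u-t}{(u-t)^{2}+\alpha^{2}} - \frac{u+t}{(u+t)^{2}+\alpha^{2}},
\]
splitting the integral into a singular piece and two smooth pieces. Splitting $Q = S + R$, I would use $|S(u)| \leq \phi_{1}(3t/2)\log(3t/2)/\log\log(3t/2)$ on $[t-\xi, t+\xi] \subseteq [t/2, 3t/2]$ (by monotonicity of $\phi_{1}(u)\log u/\log\log u$ past $e^{e^{2}}$) together with $|R(u)| \leq 1/(150u) \leq 1/(75t)$. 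The key observation for~\eqref{eq:abslogzeta} is the odd symmetry of the singular kernel about $u = t$: substituting $v = u - t$,
\[
\int_{t-\xi}^{t+\xi} Q(u)\,\frac{u-t}{(u-t)^{2}+\alpha^{2}}\,du = \int_{0}^{\xi}[Q(t+v) - Q(t-v)]\,\frac{v\,dv}{v^{2}+\alpha^{2}},
\]
whose modulus is at most $M_{Q}\log(1 + \xi^{2}/\alpha^{2})$, where $M_{Q} := \phi_{1}(3t/2)\log(3t/2)/\log\log(3t/2) + 1/(75t)$. The elementary inequality $\log(1+y^{2}) \leq 2\log(1+2y)$ for $y \geq 0$ (equivalent to $(1+2y)^{2} \geq 1 + y^{2}$) applied with $y = \xi/\alpha$ then converts this to $2M_{Q}\log\bigl(1 + 2\lambda/((\sigma-\tfrac{1}{2})\log\log(3t/2))\bigr)$, matching the leading term of~\eqref{eq:abslogzeta}. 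The two smooth kernel pieces integrate against $|Q| \leq M_{Q}$ to give $O(M_{Q}\xi/t) = O((\phi_{1}\log(3t/2)/\log\log(3t/2))\cdot\lambda/(t\log\log(3t/2)))$ contributions, furnishing the $\lambda/\log\log(3t/2)$ term of $\Omega$; combined with the explicit $R_{2}$ of Lemma~\ref{lem:logzeta2}, these assemble into the coefficients $a_{1}(t_{0}),\ldots,a_{4}(t_{0})$.

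For the one-sided bound~\eqref{eq:abszeta}, the $\alpha$-independent coefficient $\lambda\log(3t/2)/(\pi\log\log(3t/2)) = \xi\log(3t/2)/\pi$ matches the Riemann--von Mangoldt asymptotic $N(t+\xi) - N(t-\xi) \sim \xi\log(3t/2)/\pi$, so a different estimate of the singular piece is required. The plan is to replace the bound $|Q| \leq M_{Q}$ by a monotonicity argument on $N$: writing $Q = N - N_{\mathrm{avg}} - 7/8$ and performing Riemann--Stieltjes integration by parts against $dN \geq 0$ yields (on discarding a non-positive sum over zeros in the interval, which is admissible for an upper bound on $\log|\zeta(s)|$) a boundary term proportional to $N(t+\xi) - N(t-\xi)$. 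Using $N(t+\xi) - N(t-\xi) \leq (\xi/\pi)\log(3t/2) + O(\phi_{1}\log(3t/2)/\log\log(3t/2))$ from~\eqref{eq:RvM} then produces the leading $\lambda\log(3t/2)/(\pi\log\log(3t/2))$ term of~\eqref{eq:abszeta}, the remainder being absorbed into $\Omega$.

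The main obstacle I expect is the careful bookkeeping required to assemble the constants $a_{1}(t_{0}),\ldots,a_{4}(t_{0})$ in their stated explicit form: this involves repeated use of the monotonicity properties of $\phi_{1}(u)\log u/\log\log u$ and $\phi_{2}(u)\log u/(\log\log u)^{2}$ over the relevant intervals, careful tracking of contributions from the smooth kernel pieces and from the endpoint and interior terms of $R_{2}$ in Lemma~\ref{lem:logzeta2}, and matching all numerical factors (such as the $1/75$ arising from doubling $1/150$, and the coefficient $12$ produced by the combined smooth-kernel integrals) with the precise kernel estimates.
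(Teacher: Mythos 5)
Your treatment of~\eqref{eq:abslogzeta} matches the paper's proof essentially step for step: the same choice $\xi=\lambda/\log\log(3t/2)$, the same partial-fraction split of the kernel, and the same final bound. (The paper bounds the singular piece by $\int|Q(u)|\,\dif{u}/\sqrt{(\sigma-1/2)^2+(u-t)^2}$ and uses $\int_{0}^{\xi}\dif{u}/\sqrt{\alpha^2+u^2}\le\log(1+2\xi/\alpha)$; you instead pass to the real part, use the odd-symmetry substitution, and apply $\log(1+y^2)\le 2\log(1+2y)$. Both give $2M_{Q}\log(1+2\xi/\alpha)$.) Your assembly of the smooth pieces and of $R_2$ from Lemma~\ref{lem:logzeta2} into $\Omega$ is also what the paper does.

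The sketch for~\eqref{eq:abszeta}, however, misidentifies where the main term comes from. If you normalize the antiderivative of the singular kernel to vanish at the endpoints, say $F(u)=-\tfrac12\log\bigl(((u-t)^2+\alpha^2)/(\xi^2+\alpha^2)\bigr)$ with $\alpha=\sigma-\tfrac12$, then the boundary term in the Riemann--Stieltjes integration by parts is identically zero, not ``proportional to $N(t+\xi)-N(t-\xi)$''. The discarded sum $-\sum_{|\gamma-t|\le\xi}F(\gamma)$ is indeed $\le 0$, but what remains after discarding it is the integral of $F$ against the \emph{continuous} part of $\dif{Q}$, namely $-\tfrac{1}{2\pi}\log\tfrac{u}{2\pi}\,\dif{u}$; it is this piece, coming from the smooth density in the Riemann--von Mangoldt formula, that produces the leading term $\xi\log(3t/2)/\pi$. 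If one keeps the unnormalized $F(u)=-\tfrac12\log\bigl((u-t)^2+\alpha^2\bigr)$, the boundary term acquires a factor $\log(\xi^2+\alpha^2)$ (wrong $\alpha$-dependence, not sign-definite) and the sum over zeros is no longer uniformly non-positive since $\xi$ may exceed $1$. The paper avoids all this bookkeeping by observing directly, after the odd-symmetry substitution, that $Q(t-u)-Q(t+u)\le \tfrac{u}{\pi}\log\tfrac{3t}{2}$ pointwise (using $N(t-u)\le N(t+u)$ to kill the $N$-part and the mean value theorem for the smooth part), and then integrating $\int_{0}^{\xi}u^2\,\dif{u}/(u^2+\alpha^2)\le\xi$. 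Your ingredients---monotonicity of $N$, the Riemann--von Mangoldt formula---are the right ones, but the contribution you attribute to a boundary count of zeros in the window actually vanishes; the main term must be extracted from the smooth density instead.
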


\begin{proof}
Take $\xi\de\lambda/\log{\log{(3t/2)}}$. Then $0<\xi\leq t_0/2$, and also $t-\xi\geq t/2$ and $t+\xi\leq 3t/2$. Firstly we are going to prove~\eqref{eq:abslogzeta}. By Lemma~\ref{lem:logzeta2} we thus have
\begin{equation}
\label{eq:R331}
\log{\left|\zeta(s)\right|} = -\Re\left\{\int_{t-\xi}^{t+\xi}\frac{Q(u)\dif{u}}{u+\ie\left(s-\frac{1}{2}\right)}\right\} + R_3,
\end{equation}
where
\[
R_3 \de R_2 - \Re\left\{\int_{t-\xi}^{t+\xi}\frac{Q(u)\dif{u}}{u-\ie\left(s-\frac{1}{2}\right)}\right\} + 2\int_{t-\xi}^{t+\xi}\frac{Q(u)}{u}\dif{u}
\]
and the real term $R_2$ is from~\eqref{eq:logzeta2}. Because
\[
\left|\Re\left\{\frac{Q(u)}{u-\ie\left(s-\frac{1}{2}\right)}\right\}\right| = \frac{(u+t)\left|Q(u)\right|}{(u+t)^2+\left(\sigma-\frac{1}{2}\right)^2} \leq \frac{\left|Q(u)\right|}{u+t} \leq \frac{\left|Q(u)\right|}{u}
\]
for $u\in\left[t-\xi,t+\xi\right]$, and $\left|Q(u)\right|\leq \left|S(u)\right| + \left|R(u)\right|$, it follows
\begin{equation}
\label{eq:R3}
\left|R_3\right| \leq \left|R_2\right| + 3\int_{t-\xi}^{t+\xi}\frac{\left|Q(u)\right|}{u}\dif{u} \leq \left|R_2\right| + 12\left(\frac{\phi_1\left(t/2\right)\log{\left(t/2\right)}}{t\log{\log{\left(t/2\right)}}}+\frac{1}{75t^2}\right)\xi
\end{equation}
since $\phi_1(u)\left(u\log{\log{u}}\right)^{-1}\log{u}$ is for $u\geq 2\pi$ a decreasing function. Therefore, inequality~\eqref{eq:R3} asserts that $\left|R_3\right| \leq \Omega\left(t_0,\lambda;t\right)$.

Also,
\begin{flalign}
\label{eq:R31}
\left|\Re\left\{\int_{t-\xi}^{t+\xi}\frac{Q(u)\dif{u}}{u+\ie\left(s-\frac{1}{2}\right)}\right\}\right| &\leq \int_{t-\xi}^{t+\xi}\frac{\left|Q(u)\right|\dif{u}}{\sqrt{\left(\sigma-\frac{1}{2}\right)^2+(u-t)^2}} \nonumber \\
&\leq 2\left(\frac{\phi_1(3t/2)\log{(3t/2)}}{\log{\log{(3t/2)}}}+\frac{1}{75t}\right)\int_{0}^{\xi}\frac{\dif{u}}{\sqrt{\left(\sigma-\frac{1}{2}\right)^2+u^2}} \nonumber \\
&\leq 2\left(\frac{\phi_1(3t/2)\log{(3t/2)}}{\log{\log{(3t/2)}}}+\frac{1}{75t}\right)\log\left(1+\frac{2\xi}{\sigma-\frac{1}{2}}\right).
\end{flalign}
Taking~\eqref{eq:R3}, together with~\eqref{eq:R2} and $\left|S_1\left(\gamma_1\right)\right|\leq 1.5$, and~\eqref{eq:R31} as an upper bound for the integral in~\eqref{eq:R331}, we obtain~\eqref{eq:abslogzeta}.

In order to prove~\eqref{eq:abszeta}, observe that
\begin{flalign*}
-\Re\left\{\int_{t-\xi}^{t+\xi}\frac{Q(u)\dif{u}}{u+\ie\left(s-\frac{1}{2}\right)}\right\} &= \int_{0}^{\xi}\frac{u\left(Q(t-u)-Q(t+u)\right)}{\left(\sigma-\frac{1}{2}\right)^2+u^2}\dif{u} \\
&\leq \frac{1}{\pi}\log{\frac{3t}{2}} \int_{0}^{\xi}\frac{u^2 \dif{u}}{\left(\sigma-\frac{1}{2}\right)^2+u^2} \leq \frac{\xi}{\pi}\log{\frac{3t}{2}}
\end{flalign*}
since
\begin{flalign*}
Q(t-u)-Q(t+u) &= N(t-u) - N(t+u) + \frac{t+u}{2\pi}\log{\frac{t+u}{2\pi e}} - \frac{t-u}{2\pi}\log{\frac{t-u}{2\pi e}} \\
&\leq 0 + \frac{u}{\pi}\log{\frac{t+u}{2\pi}} \leq \frac{u}{\pi}\log{\frac{3t}{2}}
\end{flalign*}
by~\eqref{eq:RvM} and the mean-value theorem. Inequality~\eqref{eq:abszeta} now follows from~\eqref{eq:R331}. The proof of Theorem~\ref{thm:zetaMain} is thus complete.
\end{proof}

\begin{corollary}
\label{cor:main}
Assume the Riemann Hypothesis. Let $s=\sigma+\ie t$ with $1/2<\sigma\leq 3/2$, and $2\gamma_1\leq t_0\leq 50$, $\lambda\left(t_0\right)\de \left(t_0/2\right)\log{\log{\left(3t_0/2\right)}}$ and
\begin{multline*}
\omega\left(t_0;u\right) \de \frac{a_1\left(t_0\right)\widehat{\phi}_2(u)}{\lambda\left(t_0\right)u} + \frac{a_2\left(t_0\right)u}{\lambda\left(t_0\right)e^{u}} + \frac{15.83}{e^{u}} \\
+ \frac{12\lambda\left(t_0\right)}{ue^{e^{u}}}\left(\frac{3\widehat{\phi}_1(u)}{2u}+\frac{3}{100e^{u+e^u}}\right) + \frac{a_4\left(t_0\right)\widehat{\phi}_2(u)}{u^2e^{u}},
\end{multline*}
\begin{gather}
\omega_1\left(\sigma,t_0;u\right) \de  2\left(\frac{\widehat{\phi}_1(u)}{u}+\frac{1}{50e^{u+e^{u}}}\right)\log{\left(1+\frac{2\lambda\left(t_0\right)}{\left(\sigma-\frac{1}{2}\right)u}\right)}
+ \omega\left(t_0;u\right), \label{eq:omega1} \\
\omega_2\left(t_0;u\right) \de \frac{\lambda\left(t_0\right)}{\pi u} + \omega\left(t_0;u\right), \label{eq:omega2}
\end{gather}
where $a_1\left(t_0\right)$, $a_2\left(t_0\right)$ and $a_4\left(t_0\right)$ are defined by~\eqref{eq:a1}, \eqref{eq:a2} and~\eqref{eq:a4}, respectively,
\[
\widehat{\phi}_1(u) \de \left\{
\begin{array}{ll}
  0.96, & \log\log{2\pi}\leq u < \log\log{10^{2465}}, \\
  1.719282-\mathcal{M}_1\left(10^{2465}\right)+\frac{20.1911}{ue^{0.285u}}, & u\geq \log\log{10^{2465}},
\end{array}
\right.
\]
and
\[
\widehat{\phi}_2(u) \de \left\{
\begin{array}{ll}
  2.491, & \log\log{2\pi}\leq u < \log\log{10^{208}}, \\
  3.144-\mathcal{M}_2\left(10^{208}\right)+\frac{60.12}{ue^{0.2705u}}, & u\geq \log\log{10^{208}},
\end{array}
\right.
\]
with $\mathcal{M}_1$ and $\mathcal{M}_2$ defined by~\eqref{eq:M1} and~\eqref{eq:M2}, respectively. Then $\omega_1\left(\sigma,t_0;u\right)$ and $\omega_2\left(t_0;u\right)$ are decreasing positive continuous functions for $u\geq1$ with
\begin{gather}
\lim_{u\to\infty}u\cdot\omega_1\left(\sigma,t_0;u\right) = \left(3.144-\mathcal{M}_2\left(10^{208}\right)\right)\frac{a_1\left(t_0\right)}{\lambda\left(t_0\right)}, \label{eq:limits} \\
\lim_{u\to\infty}u\cdot \omega_2\left(t_0;u\right) = \frac{1}{\pi}\lambda\left(t_0\right) + \left(3.144-\mathcal{M}_2\left(10^{208}\right)\right)\frac{a_1\left(t_0\right)}{\lambda\left(t_0\right)}, \label{eq:limits2}
\end{gather}
and $\omega_1\left(\sigma,t_0;u\right)$ is a decreasing function for $\sigma>1/2$. Moreover, for $2\exp{\left(e^2\right)}\leq t\leq T$ the inequalities~\eqref{eq:corMain} and~\eqref{eq:corMain2} are true.
\end{corollary}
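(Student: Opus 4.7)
The plan is to specialise Theorem~\ref{thm:zetaMain} at the extremal parameter $\lambda=\lambda(t_0)=(t_0/2)\log{\log{(3t_0/2)}}$, perform the substitution $u\de\log{\log{(3T/2)}}$ (so that $\log{(3T/2)}=e^u$ and $3T/2=e^{e^u}$), and extract a factor of $\log{(3T/2)}$ from both \eqref{eq:abslogzeta} and \eqref{eq:abszeta}. Under this substitution one has $\widehat{\phi}_j(u)=\phi_j(e^{e^u})$, and the products $\widehat{\phi}_j(u)e^u/u^j$ together with $e^u/(50\,e^{u+e^u})=1/(75T)$ that appear after division by $\log{(3T/2)}$ match exactly the $T$-values of the corresponding terms in the bound. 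What remains is to dominate each $t$-dependent building block uniformly by its $t=T$ counterpart on $t\in[2\exp{(e^2)},T]$, which uses the monotonicities already exploited in the proof of Lemma~\ref{lem:logzeta2}: $\phi_j(3t/2)(\log{\log{(3t/2)}})^{-j}\log{(3t/2)}$ is increasing for $t\geq e^{e^2}$, while $\phi_j(3t/2)(\log{\log{(3t/2)}})^{-j}$ is decreasing. The numerical constant $15.83$ replacing $a_3(t_0)$ inside $\omega(t_0;u)$ is obtained as a uniform bound of $a_3(t_0)+2(1+t_0)^2|S_1(\gamma_1)|/(\gamma_1(t_0^2-\gamma_1^2))$ over $2\gamma_1\leq t_0\leq 50$, using $|S_1(\gamma_1)|\leq 1.5$ as in the proof of Theorem~\ref{thm:zetaMain}.

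The analytic claims on $\omega_1$ and $\omega_2$ are then a direct reading of \eqref{eq:omega1}--\eqref{eq:omega2}. Continuity follows because $\widehat{\phi}_j$ is constructed to be continuous at the join points $\log{\log{10^{2465}}}$ and $\log{\log{10^{208}}}$, by the very choice of additive constants in \eqref{eq:SExpl} and \eqref{eq:S1Expl}. Each summand is individually decreasing in $u$---the factor $\log{(1+c/u)}$ drops, and so do $\widehat{\phi}_j(u)/u^j$, $u/e^u$, $1/e^{e^u}$ and $\widehat{\phi}_2(u)/(u^2 e^u)$---so $\omega_1$ and $\omega_2$ are decreasing positive continuous functions of $u\geq 1$. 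Monotonicity in $\sigma$ is immediate because the only $\sigma$-dependence is through $\log{(1+2\lambda(t_0)/((\sigma-1/2)u))}$. For the limits \eqref{eq:limits}--\eqref{eq:limits2}, note that $\widehat{\phi}_2(u)\to 3.144-\mathcal{M}_2(10^{208})$ as $u\to\infty$ and $\log{(1+c/u)}=c/u+O(1/u^2)$; hence $u\cdot 2\bigl(\widehat{\phi}_1(u)/u\bigr)\log{(1+O(1/u))}\to 0$, while in $u\cdot\omega(t_0;u)$ only the first summand $a_1(t_0)\widehat{\phi}_2(u)/\lambda(t_0)$ survives, producing the constant on the right of \eqref{eq:limits}. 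The $\omega_2$ case merely adds the $\lambda(t_0)/\pi$ coming from the linear $\log{(3T/2)}$ in \eqref{eq:abszeta}.

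The main technical obstacle I anticipate is the mixed monotonicity in the coefficient of $\log{(1+\cdot)}$ in \eqref{eq:abslogzeta}: the dominant increasing factor $\phi_1(3t/2)\log{(3t/2)}/\log{\log{(3t/2)}}$ is accompanied by the decreasing additive piece $1/(75t)$, which at a general $t<T$ exceeds its $T$-analogue $1/(75T)$, and both are multiplied by the decreasing $\log{(1+c/\log{\log{(3t/2)}})}$. A clean way to handle this is to split the coefficient into the $\phi_1$-piece (handled directly by the monotonicity from Lemma~\ref{lem:logzeta2}) and the $1/(75t)$-piece, absorbing the residual discrepancy into the generous constant $15.83$ inside $\omega(t_0;u)$; an analogous split treats the $12\phi_1(t/2)\log{(t/2)}/(t\log{\log{(t/2)}})$ contribution in $\Omega$. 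Once this uniformity is settled, the explicit inequalities \eqref{eq:corMainConrete}--\eqref{eq:corMain2Conrete} follow from a numerical evaluation of $\omega_1$ and $\omega_2$ at $T\geq(2/3)\exp{(e^{10})}$ for a suitable choice of $t_0$.
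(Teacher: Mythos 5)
Your treatment of the easy pieces—continuity of $\widehat{\phi}_1,\widehat{\phi}_2$ at the junction points, positivity and $u$-monotonicity of each summand of $\omega_1$ and $\omega_2$, $\sigma$-monotonicity, and the limits \eqref{eq:limits}--\eqref{eq:limits2}—is fine and essentially matches the paper. The gap is in the substantial part of the corollary: deducing \eqref{eq:corMain}--\eqref{eq:corMain2} for all $2\exp(e^2)\leq t\leq T$ from Theorem~\ref{thm:zetaMain}, which gives a bound at the specific ordinate~$t$.

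Your proposal is to dominate, term by term, the $t$-dependent bound by its $t=T$ counterpart. That approach fails, and not only for the cosmetic reason you flag with the $1/(75t)$ piece. After multiplying $\omega_1(\sigma,t_0;u)$ or $\omega_2(t_0;u)$ by $e^u=\log(3t/2)$—the relevant normalization, since what must be monotone in $t$ is the full bound $\omega_j(t_0;\log\log(3t/2))\log(3t/2)$—several entire summands are genuinely \emph{decreasing} in $u$. For instance $e^u\cdot\dfrac{a_4(t_0)\widehat{\phi}_2(u)}{u^2e^u}=\dfrac{a_4(t_0)\widehat{\phi}_2(u)}{u^2}$ decreases, and so does $e^u\cdot\dfrac{12\lambda}{ue^{e^u}}\cdot\dfrac{3\widehat{\phi}_1(u)}{2u}$ (the $e^{e^u}$ wins); these correspond to terms of $\Omega$ that are strictly larger at a generic $t<T$ than at $T$. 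Your proposed remedy—absorbing ``the residual discrepancy into the generous constant $15.83$''—is neither quantified nor plausible as stated: $15.83$ is chosen simply as an upper bound for $a_3(t_0)$ over $t_0\in[2\gamma_1,50]$ (together with the $|S_1(\gamma_1)|$ contribution), not as slack to absorb $t$-versus-$T$ discrepancies that depend on both $t$ and $T$ and can dwarf any fixed slack near the lower endpoint $t=2\exp(e^2)$. You also misidentify the $12\phi_1(t/2)\log(t/2)/(t\log\log(t/2))$ contribution as needing such absorption; that piece is in fact controlled directly by the stated monotonicity $\phi_1(u)\log u/\log\log u\uparrow$, replacing $t/2$ by $3t/2$ inside the bound, which is how it enters the definition of $\omega$.

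What the paper actually does, and what is missing from your proposal, is a global monotonicity statement: it defines $\widehat{\omega}_j(u)\de e^u\omega_j(t_0;u)$ and proves $\widehat{\omega}_j'(u)>0$ for $u\geq\log\log(3\exp(e^2))$ by computing the derivative explicitly and running a three-case analysis on the regimes $u<\log\log 10^{208}$, $\log\log 10^{208}<u<\log\log 10^{2465}$, and $u>\log\log 10^{2465}$ (where $\widehat{\phi}_1,\widehat{\phi}_2$ change formula), with careful numerical lower bounds on the competing increasing and decreasing contributions. This case analysis is the real content of the corollary's proof; nothing in your outline produces it, and no amount of term-by-term comparison or constant-absorption substitutes for it.
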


\begin{proof}
It is clear that $\omega_1\left(\sigma,t_0;u\right)$ and $\omega_2\left(t_0;u\right)$ are decreasing positive continuous functions in the variable $u\geq 1$ since $\widehat{\phi}_1(u)$, $\widehat{\phi}_2(u)$, and $ue^{-u}$ are decreasing positive functions for $u\geq 1$, and that $\omega_1\left(\sigma,t_0;u\right)$ is a decreasing function in $\sigma>1/2$. It is also clear that~\eqref{eq:limits} and~\eqref{eq:limits2} hold. Take $u=\log{\log{\left(3t/2\right)}}$. Then $\widehat{\phi}_1(u)=\phi_1(3t/2)$ and $\widehat{\phi}_2(u)=\phi_2(3t/2)$, where $\phi_1$ and $\phi_2$ are defined by~\eqref{eq:SExpl} and~\eqref{eq:S1Expl}, respectively. Because the function $a_3\left(t_0\right)$, given by~\eqref{eq:a3}, is decreasing, it follows that $a_3\left(t_0\right)\leq 15.83$. Inequalities~\eqref{eq:corMain} and~\eqref{eq:corMain2} now easily follow from Theorem~\ref{thm:zetaMain}, if we are able to show that
\[
\widehat{\omega}_1(u) \de e^{u}\omega_1\left(\sigma,t_0;u\right), \quad \widehat{\omega}_2(u) \de e^{u}\omega_2\left(t_0;u\right)
\]
are increasing functions for $u\geq \log{\log{\left(3\exp{\left(e^2\right)}\right)}}$.

Straightforward calculation confirms that
\begin{flalign*}
\widehat{\omega}_1'(u) &= \frac{2e^{u}\widehat{\phi}_1}{u}\left(1-\frac{1}{u}+\frac{\widehat{\phi}_1'}{\widehat{\phi}_1}-\frac{u}{50\widehat{\phi}_1e^{e^{u}}}\right)
\log{\left(1+\frac{2\lambda}{\left(\sigma-\frac{1}{2}\right)u}\right)} \\
&- \frac{e^{u}\widehat{\phi}_1}{u}
\left(1+\frac{u}{50\widehat{\phi}_1e^{u+e^{u}}}\right)\frac{4\lambda}{u\left(\left(\sigma-\frac{1}{2}\right)u+2\lambda\right)}
+ \frac{\dif{}}{\dif{u}}e^{u}\omega\left(t_0;u\right), \\
\widehat{\omega}_2'(u) &= \frac{\lambda e^{u}}{\pi u}\left(1-\frac{1}{u}\right) + \frac{\dif{}}{\dif{u}}e^{u}\omega\left(t_0;u\right)
\end{flalign*}
with
\begin{multline*}
\frac{\dif{}}{\dif{u}}e^{u}\omega\left(t_0;u\right) = \frac{a_2}{\lambda} + \frac{a_1e^{u}\widehat{\phi}_2}{u\lambda}\left(1-\frac{1}{u}+\frac{\widehat{\phi}_2'}{\widehat{\phi}_2}\right)
- \frac{2a_4\widehat{\phi}_2}{u^2}\left(\frac{1}{u}-\frac{1}{2}\frac{\widehat{\phi}_2'}{\widehat{\phi}_2}\right) \\
-12\lambda e^{2u-e^{u}}\left(\frac{3}{100u^2 e^{2u+e^{u}}}+\frac{3\widehat{\phi}_1}{u^3e^{u}}+\frac{3}{50ue^{u+e^{u}}}+\frac{3\left(1-e^{-u}\right)\widehat{\phi}_1}{2u^2}-\frac{3\widehat{\phi}_1'}{2u^2e^{u}}\right)
\end{multline*}
for $u\in\mathcal{D}$, where
\[
\mathcal{D}\de \left(1,\infty\right)\setminus\left\{\log\log{10^{208}},\log\log{10^{2465}}\right\}.
\]
We are going to demonstrate that $\widehat{\omega}_1'(u)>0$ and $\widehat{\omega}_2'(u)>0$ for
\[
u\in\mathcal{D}_0\de\mathcal{D}\cap\left[\log{\log{\left(3\exp{\left(e^2\right)}\right)}},\infty\right).
\]
Then it will follow that $\widehat{\omega}_1(u)$ and $\widehat{\omega}_2(u)$ increase for all $u\geq \log{\log{\left(3\exp{\left(e^2\right)}\right)}}$ since $\widehat{\omega}_1$ and $\widehat{\omega}_2$ are continuous functions and $\left[\log{\log{\left(3\exp{\left(e^2\right)}\right)}},\infty\right)\setminus\mathcal{D}_0$ is finite.

Because $2\gamma_1\leq t_0\leq 50$, we obtain $18.67\leq\lambda\left(t_0\right)\leq 36.6$, $42.65\leq a_1\left(t_0\right)\leq 44.1$, $0.033\leq a_2\left(t_0\right)\leq 0.044$ and $0.998\leq a_4\left(t_0\right)\leq 1.032$. In order to derive bounds for $\widehat{\phi}_1(u)$, $\widehat{\phi}_2(u)$ and corresponding derivatives, we will divide the set $\mathcal{D}_0$ into three parts, namely:
\begin{enumerate}
  \item The case of $\log{\log{\left(3\exp{\left(e^2\right)}\right)}}\leq u < \log{\log{10^{208}}}$. Here we have
        \[
        \widehat{\phi}_1=0.96, \quad \widehat{\phi}_2=2.491, \quad \widehat{\phi}_1'=\widehat{\phi}_2'=0.
        \]
  \item The case of $\log{\log{10^{208}}}< u < \log{\log{10^{2465}}}$. Here we have
        \[
        \widehat{\phi}_1=0.96, \quad \widehat{\phi}_2 \geq 1.3273, \quad \widehat{\phi}_1'=0, \quad \frac{|\widehat{\phi}_2'|}{\widehat{\phi}_2}\leq 0.32.
        \]
  \item The case of $u>\log{\log{10^{2465}}}$. Here we have
        \[
        0.761\leq \widehat{\phi}_1 \leq 0.96, \quad \widehat{\phi}_2 \geq 0.656, \quad \frac{|\widehat{\phi}_1'|}{\widehat{\phi}_1}\leq 0.1, \quad
        \frac{|\widehat{\phi}_2'|}{\widehat{\phi}_2}\leq 0.2, \quad |\widehat{\phi}_1'|\leq 0.08.
        \]
\end{enumerate}
All these estimates simply follow from definitions for $\widehat{\phi}_1$ and $\widehat{\phi}_2$. It is not hard to see that we can now write
\begin{gather}
\widehat{\omega}_1'(u) \geq \frac{74.68e^{u}\widehat{\phi}_1}{u\left(u+73.2\right)}\left(1-\frac{2}{u}-\frac{|\widehat{\phi}_1'|}{\widehat{\phi}_1}-\frac{u}{25\widehat{\phi}_1 e^{e^{u}}}\right) + \frac{\dif{}}{\dif{u}}e^{u}\omega\left(t_0;u\right), \label{eq:boundderiv} \\
\widehat{\omega}_2'(u) \geq \frac{18.67 e^{u}}{\pi u}\left(1-\frac{1}{u}\right) + \frac{\dif{}}{\dif{u}}e^{u}\omega\left(t_0;u\right) \label{eq:boundderiv3}
\end{gather}
with
\begin{multline}
\label{eq:boundderiv2}
\frac{\dif{}}{\dif{u}}e^{u}\omega\left(t_0;u\right) \geq \frac{e^{u}\widehat{\phi}_2}{u}\left(\frac{42.65}{36.6}\left(1-\frac{1}{u}\right)-\frac{2.064}{u^2e^{u}}-\left(\frac{44.1}{18.67}+\frac{1.032}{ue^{u}}\right)
\frac{|\widehat{\phi}_2'|}{\widehat{\phi}_2}\right) \\
- 439.2e^{2u-e^u}\left(\frac{3}{50ue^{u+e^{u}}}\left(1+\frac{1}{2ue^u}\right)+\frac{3\widehat{\phi}_1}{2u^2}\left(1+\frac{2}{ue^u}\right)+
\frac{3|\widehat{\phi}_1'|}{2u^2e^u}\right),
\end{multline}
where we also used $\log{\left(1+u\right)}\geq u/(1+u)$ in derivation of~\eqref{eq:boundderiv}. By using bounds from each of the above cases (1)--(3) in the inequality~\eqref{eq:boundderiv2}, we obtain the following:
\begin{enumerate}
  \item The case of $\log{\log{\left(3\exp{\left(e^2\right)}\right)}}\leq u < \log{\log{10^{208}}}$. Here we have
        \[
        \frac{\dif{}}{\dif{u}}e^{u}\omega\left(t_0;u\right) \geq 1.413\frac{e^u}{u} - 153.52e^{2u-e^u} > 0.
        \]
  \item The case of $\log{\log{10^{208}}}< u < \log{\log{10^{2465}}}$. Here we have
        \[
        \frac{\dif{}}{\dif{u}}e^{u}\omega\left(t_0;u\right) \geq 0.292\frac{e^u}{u} - 16.62e^{2u-e^u} > 0.
        \]
  \item The case of $u>\log{\log{10^{2465}}}$. Here we have
        \[
        \frac{\dif{}}{\dif{u}}e^{u}\omega\left(t_0;u\right) \geq 0.366\frac{e^u}{u} - 8.47e^{2u-e^u} > 0.
        \]
\end{enumerate}
Because the expressions in the brackets in~\eqref{eq:boundderiv} and~\eqref{eq:boundderiv3} are positive in all of the above cases, it follows $\widehat{\omega}_1'(u)>0$ and $\widehat{\omega}_2'(u)>0$ for all $u\in\mathcal{D}_0$. The proof is thus complete.
\end{proof}

\begin{proof}[Proof of Theorem~\ref{thm:main}.]
The first part of Theorem~\ref{thm:main} follows immediately from Corollary~\ref{cor:main}. Estimates~\eqref{eq:corMainConrete} and~\eqref{eq:corMain2Conrete} follow by taking $t_0=2\gamma_1$ and $u\geq 10$ in Corollary~\ref{cor:main} while observing that then $u\cdot\omega\left(2\gamma_1;u\right)$ is a decreasing function.
\end{proof}

In addition to Corollary~\ref{cor:main}, for our applications it is crucial to estimate $\left|1/\zeta(s)\right|$ and $\left|\zeta(s)\right|$ also for $0\leq t\leq 2\exp{\left(e^2\right)}$. We do this by using numerical methods.

\begin{lemma}
\label{lem:zetabound}
Let $s=\sigma+\ie t$ with $1/2<\sigma\leq 3/2$ and $0\leq t\leq 2\exp{\left(e^2\right)}$. Then
\[
\left|\frac{1}{\zeta(s)}\right| \leq \frac{4}{\sigma-\frac{1}{2}}.
\]
\end{lemma}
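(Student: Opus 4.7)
The claim is a statement about $|\zeta|$ on the bounded region $R=\{1/2<\sigma\leq 3/2,\ 0\leq t\leq 2\exp(e^2)\}$, equivalent to $|\zeta(\sigma+\ie t)|\geq (\sigma-1/2)/4$ throughout $R$. Since the first few thousand nontrivial zeros (all of height well below $2\exp(e^2)\approx 3236$) have been numerically verified to lie on $\sigma=1/2$, the function $\zeta$ has no zeros in $R$, and the right-hand side of the inequality vanishes at the boundary $\sigma=1/2$ exactly where $\zeta$ is permitted to vanish. So no analytical miracle is being asked for; the lemma is a quantitative numerical assertion.

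My approach is therefore rigorous numerical verification. The inequality is tight only for $s$ close to the critical line and close to some nontrivial zero, so for each of the roughly $2700$ zeros $\rho_n=1/2+\ie\gamma_n$ with $\gamma_n\leq 2\exp(e^2)$, I would Taylor expand
\[
\zeta(s)=\zeta'(\rho_n)(s-\rho_n)+\tfrac{1}{2}\zeta''(\rho_n)(s-\rho_n)^2+\cdots
\]
with an explicit Cauchy-type remainder bound obtained from a rigorous supremum of $|\zeta|$ on a slightly larger disk. Verifying numerically (via Arb or comparable interval arithmetic) that $|\zeta'(\rho_n)|\geq 1/2$ at every such zero, I then obtain $|\zeta(s)|\geq (1/4)|s-\rho_n|\geq (1/4)(\sigma-1/2)$ in a small disk around each $\rho_n$. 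Away from the union of these disks, $R$ shrinks to a compact set on which $|\zeta(s)|$ is bounded below by an absolute positive constant; this residual verification is carried out on a grid fine enough that a Lipschitz bound from $|\zeta'|$ closes the gap between grid points. Near the pole at $s=1$, the Laurent expansion $\zeta(s)=1/(s-1)+\gamma_{\mathrm{EM}}+O(s-1)$ makes $|\zeta(s)|$ large automatically, so no delicate work is needed there.

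The main obstacle is orchestrating the rigorous numerical component at scale: thousands of individual zeros each demanding a verified lower bound on $|\zeta'(\rho_n)|$, together with a grid-based verification on the complement, with all disk radii and Lipschitz constants matched so that the pieces combine cleanly to the uniform bound $|\zeta(s)|(\sigma-1/2)^{-1}\geq 1/4$. The constant $4$ is clearly not sharp, which is what gives enough slack for a finite computation to succeed; the real work is bookkeeping rather than analysis.
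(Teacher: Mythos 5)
Your plan is sound in outline and correctly identifies the essential reduction: the inequality is a quantitative statement on a bounded region containing finitely many (verified, simple) nontrivial zeros, so a finite rigorous computation must suffice. The paper, however, reaches the conclusion by a cleaner mechanism. It applies the maximum modulus principle to $F_n(s) = \bigl(s - \tfrac{1}{2} - \ie\gamma_n\bigr)/\zeta(s)$ on closed rectangles $\mathcal{S}_n$ each containing exactly one nontrivial zero; since the numerator cancels the simple zero at $\rho_n$, $F_n$ is holomorphic on a neighbourhood of $\mathcal{S}_n$, so $\max_{\mathcal{S}_n}|F_n|$ is attained on the one-dimensional boundary $\partial\mathcal{S}_n$ and can be verified by sampling finitely many boundary points, and the factor $(\sigma-\tfrac{1}{2})^{-1}$ falls out exactly from $|s-\tfrac{1}{2}-\ie\gamma_n|\geq\sigma-\tfrac{1}{2}$ with no remainder to control. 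Your Taylor-plus-grid scheme, by contrast, needs (i) a verified uniform lower bound on $|\zeta'(\rho_n)|$ over $\gamma_n\leq 2e^{e^2}$ --- you propose $\geq 1/2$, but the paper's computation (which yields $\max_n\max_{\partial\mathcal{S}_n}|F_n|\leq 3.3$) only guarantees $|\zeta'(\rho_n)|\geq 1/3.3\approx 0.30$, so your threshold is a guess that might not survive verification; (ii) an explicit Cauchy remainder on each disk, which, given $|\zeta|$ can be as large as $\approx 14$ here, forces the disks to be quite small; and (iii) a separate two-dimensional grid with a matching Lipschitz bound on the large complement. All of this can be made rigorous, but it is considerably more bookkeeping than the paper's one-dimensional boundary sweep, and none of your constants are actually established. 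The trick worth taking away from the paper is the auxiliary function $F_n$: it converts the problem of bounding $|\zeta|$ from below near a zero into bounding a holomorphic function from above, absorbing the $(\sigma-\tfrac{1}{2})^{-1}$ singularity exactly.
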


\begin{proof}
Let
\[
F_n(s) \de \frac{s-\frac{1}{2}-\ie\gamma_n}{\zeta(s)}
\]
for $n\in\N$, and define sets
\begin{gather*}
\mathcal{S}_n \de \left\{z\in\C\colon \frac{1}{2}\leq \Re\{z\}\leq \frac{3}{2}, \frac{\gamma_{n-1}+\gamma_{n}}{2}\leq \Im\{z\}\leq \frac{\gamma_{n}+\gamma_{n+1}}{2}\right\}, \\
\mathcal{S}_{0} \de \left\{z\in\C\colon \frac{1}{2}\leq \Re\{z\}\leq \frac{3}{2}, 0\leq \Im\{z\}\leq 11\right\},
\end{gather*}
where $\gamma_0\de 22-\gamma_1$. Observe that
\[
\mathcal{S}\de \left\{z\in\C\colon \frac{1}{2}\leq \Re\{z\}\leq \frac{3}{2}, 0\leq \Im\{z\}\leq 2e^{e^{2}}\right\} \subset \mathcal{S}_{0}\cup \bigcup_{n=1}^{2703}\mathcal{S}_n.
\]
Because $F_n(s)$ is a holomorphic function in a neighborhood of $\mathcal{S}_n$ for each $n\in\N$, and $1/\zeta(s)$ is a holomorphic function in a neighborhood of $\mathcal{S}_{0}$, the maximum modulus principle asserts that
\begin{equation}
\label{eq:maxprinc}
\left|\frac{1}{\zeta(s)}\right| \leq \frac{1}{\sigma-\frac{1}{2}} \cdot \max\left\{\max_{1\leq n\leq 2703}\left\{\max_{z\in\partial{\mathcal{S}_n}}\left\{\left|F_n(z)\right|\right\}\right\},
\max_{z\in\partial{\mathcal{S}_0}}\left\{\left|\frac{1}{\zeta(z)}\right|\right\}\right\}
\end{equation}
for $s\in\mathcal{S}$. For each $n\in\{0,1,\ldots,2703\}$ we calculated (using \emph{Mathematica}) values of $\left|F_n(z)\right|$ for $n\neq 0$ and $\left|1/\zeta(z)\right|$ for $n=0$, respectively, at $101$ points for each edge of $\partial{\mathcal{S}_n}$ and points are placed equidistantly to each other. After making further inspection by performing calculations on more points for those $n$ for which the latter procedure returned values greater than $2$, we obtain
\[
\max_{z\in\partial{\mathcal{S}_0}}\left\{\left|\frac{1}{\zeta(z)}\right|\right\} \leq 2, \quad \max_{1\leq n\leq 2703}\left\{\max_{z\in\partial{\mathcal{S}_n}}\left\{\left|F_n(z)\right|\right\}\right\} \leq 3.3,
\]
and the latter expression attains extremum for $n=922$ and $n=923$. The lemma now follows by~\eqref{eq:maxprinc} and after rounding the constants to the largest integers.
\end{proof}

\begin{lemma}
\label{lem:zetabound2}
Let $s=\sigma+\ie t$ with $1/2\leq\sigma\leq 3/4$ and $0\leq t\leq 2\exp{\left(e^2\right)}$. Then $\left|\zeta(s)\right|\leq 14$.
\end{lemma}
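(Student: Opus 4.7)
The plan is to mirror the argument used for Lemma~\ref{lem:zetabound}, but much more directly since $\zeta(s)$ has no singularity in the closed rectangle
\[
\mathcal{S}'\de\left\{z\in\C\colon \tfrac{1}{2}\leq \Re\{z\}\leq \tfrac{3}{4},\ 0\leq \Im\{z\}\leq 2e^{e^{2}}\right\},
\]
the pole at $s=1$ lying outside since $\sigma\leq 3/4$. Because $\zeta$ is holomorphic on a neighborhood of this compact set, the maximum modulus principle gives
\[
\max_{s\in\mathcal{S}'}\left|\zeta(s)\right| = \max_{s\in\partial\mathcal{S}'}\left|\zeta(s)\right|,
\]
so it suffices to verify the bound $|\zeta(s)|\leq 14$ along the four edges of $\partial\mathcal{S}'$.

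The edges $t=0$ and $t=2e^{e^{2}}$ are short and handled immediately: on the real segment $\sigma\in[1/2,3/4]$, $\zeta(\sigma)$ is continuous and negative with $|\zeta(\sigma)|\leq|\zeta(1/2)|<1.5$, while on the top edge the three-point Phragm\'en--Lindel\"of-type estimates or direct computation keep $|\zeta|$ well under $14$. The two long vertical edges $\sigma=1/2$ and $\sigma=3/4$ with $0\leq t\leq 2e^{e^{2}}\approx 3236$ are the substantive part and I would dispatch them numerically as in the proof of Lemma~\ref{lem:zetabound}: sample $|\zeta(\sigma+\ie t)|$ at a sufficiently fine equidistant grid of values of $t$ on each edge using \emph{Mathematica}, and then densify the grid around local maxima until a rigorous numerical guarantee at $|\zeta|\leq 14$ is obtained.

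To justify that the discrete sampling controls the supremum between grid points, I would appeal to an a priori Lipschitz bound: for $1/2\leq\sigma\leq 3/4$ and $|t|\leq T_0\de 2e^{e^2}$, a standard convexity argument (or crude use of the approximate functional equation) gives $|\zeta'(\sigma+\ie t)|\leq C$ for an explicit moderate constant $C$ on the strip, so that a spacing $\Delta t\leq (14-M)/C$, where $M$ is the maximum of $|\zeta|$ observed on the grid, certifies that no value on the edge exceeds $14$. With $M$ empirically well below $14$ (heuristically in the range $5$--$10$, attained on $\sigma=1/2$ near the largest $t$ and at zeros of $\zeta$ being irrelevant), the required grid is entirely manageable.

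The main obstacle, as in Lemma~\ref{lem:zetabound}, is not the analytic structure but ensuring the numerical verification is rigorous: one must either use interval arithmetic or combine coarse sampling with the explicit Lipschitz bound on $\zeta'$ to rule out a narrow spike between grid points. Once the $t=0$ edge is disposed of analytically and the Lipschitz constant $C$ on the two long vertical edges is fixed, the remaining work is a bounded computation whose conclusion, after rounding up to the integer $14$, yields the stated lemma.
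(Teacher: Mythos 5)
Your approach is essentially the same as the paper's: maximum modulus on the rectangle (the pole at $s=1$ lies outside since $\sigma\leq 3/4$), followed by a numerical check of $|\zeta|$ on the boundary edges using \emph{Mathematica}; the paper samples $101$ equidistant points per gap between consecutive zeros on the two vertical edges (obtaining maxima $\approx 13.5$ on $\sigma=1/2$ and $\approx 6.91$ on $\sigma=3/4$) and rounds up to $14$. Your extra discussion of a Lipschitz bound on $\zeta'$ to certify the in-between values is a welcome strengthening of the rigor, which the paper glosses over. One small slip: on the $t=0$ edge, $\zeta(\sigma)$ is real, negative, and \emph{decreasing} on $[1/2,3/4]$ (since it tends to $-\infty$ as $\sigma\to 1^-$), so the maximum modulus there is attained at $\sigma=3/4$ with $|\zeta(3/4)|\approx 3.44$, not at $\sigma=1/2$; your claimed bound $|\zeta(\sigma)|\leq|\zeta(1/2)|<1.5$ is false, though of course $3.44<14$ so the conclusion of the lemma is unaffected.
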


\begin{proof}
Let
\[
\mathcal{T} \de \left\{z\in\C\colon \frac{1}{2}\leq \Re\{z\}\leq \frac{3}{4}, 0\leq \Im\{z\}\leq \gamma_{2703}\right\}.
\]
The maximum modulus principle asserts that $\left|\zeta(s)\right|\leq \max_{z\in\partial{\mathcal{T}}}\left|\zeta(z)\right|$ for $s\in\mathcal{T}$. Using \emph{Mathematica} we obtained
\[
\max_{\gamma_1\leq t\leq \gamma_{2703}}\left|\zeta\left(\frac{1}{2}+\ie t\right)\right| \leq 13.5, \quad
\max_{\gamma_1\leq t\leq \gamma_{2703}}\left|\zeta\left(\frac{3}{4}+\ie t\right)\right| \leq 6.91
\]
by calculating values of $\left|\zeta(s)\right|$ at $101$ equidistantly distributed points for each gap between consecutive zeros. Similarly we can verify that $\left|\zeta(s)\right|$ is less than $3.5$ on the rest of $\partial{\mathcal{T}}$. The lemma now follows after rounding the constants to the largest integers.
\end{proof}

\section{Explicit truncated Perron's formula with applications}
\label{sec:Perron}

There exist several variants of the truncated Perron's formula, e.g., the classical version~\cite[Lemma 3.12]{Titchmarsh} which implies~\eqref{eq:PerronNonexplicit} and~\eqref{eq:PerronNonexplicit2}, and the version with a smooth truncation~\cite{RamanaRamare} which produces the error term without the log-factor for suitable chosen test functions. The following theorem is an explicit version of the classical variant from~\cite{RamareEigen} with the error term which is good enough for our purposes. In Section~\ref{sec:general} we derive general estimates for the Mertens function $M(x)$ and the number of $k$-free numbers $Q_{k}(x)$.

\begin{theorem}
\label{thm:truncatedPerron}
Let $f(s)=\sum_{n=1}^{\infty}a_n n^{-s}$ be the Dirichlet series with the abscissa of absolute convergence $\bar{\sigma}$, and let $g(\sigma)=\sum_{n=1}^{\infty}\left|a_n\right|n^{-\sigma}$ for $\sigma>\bar{\sigma}$. If $x\geq 1$, $T\geq 1$, $\sigma>\max\{0,\bar{\sigma}\}$ and $\left|a_n\right|\leq \psi(n)$ for an increasing positive function $\psi$, then
\[
\left|\sum_{n\leq x}a_n - \frac{1}{2\pi\ie}\int_{\sigma-\ie T}^{\sigma+\ie T}f(z)\frac{x^z}{z}\dif{z}\right| \leq 2g(\sigma)\frac{x^{\sigma}}{T} +
4e^{\sigma}\left(\frac{e\psi(ex)x\log{T}}{T} + \psi(ex)\right).
\]
\end{theorem}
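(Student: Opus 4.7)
The plan is to prove Theorem~\ref{thm:truncatedPerron} along the classical Perron lines, but with every constant tracked. The starting point is to interchange summation and the truncated contour integral, which is legitimate because $f(z)=\sum a_{n}n^{-z}$ converges absolutely and uniformly on the segment $\Re z=\sigma>\bar{\sigma}$:
$$\frac{1}{2\pi\ie}\int_{\sigma-\ie T}^{\sigma+\ie T}f(z)\frac{x^{z}}{z}\dif{z}=\sum_{n=1}^{\infty}a_{n}J\!\left(\tfrac{x}{n},\sigma,T\right),\qquad J(y,c,T)\de\frac{1}{2\pi\ie}\int_{c-\ie T}^{c+\ie T}\frac{y^{s}}{s}\dif{s}.$$
Setting $\delta(y)=\mathbf{1}_{y>1}$, the quantity to estimate reduces to $E=\sum_{n=1}^{\infty}a_{n}\bigl(\mathbf{1}_{n\leq x}-J(x/n,\sigma,T)\bigr)$, which differs from $\sum_{n}a_{n}(\delta(x/n)-J(x/n,\sigma,T))$ only at the single index $n=x$ when $x\in\mathbb{Z}$.

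The workhorse is the effective Perron kernel estimate obtained by shifting the contour defining $J(y,c,T)$ to $\Re s=-\infty$ (resp.\ $+\infty$) for $y>1$ (resp.\ $y<1$) and collecting the residue at $s=0$:
$$\bigl|J(y,c,T)-\delta(y)\bigr|\leq \frac{y^{c}}{\pi T\,|\log y|},\qquad y\neq 1,$$
together with the companion bound $|J(1,c,T)-1/2|\leq c/(\pi T)$ obtained directly. The plan is then to split the error $E$ at the threshold $|\log(x/n)|=1$, i.e.\ at $n=x/e$ and $n=ex$.

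In the \emph{far} range $n\leq x/e$ or $n\geq ex$ we have $|\log(x/n)|\geq 1$, so the kernel bound and $\sum_{n}|a_{n}|(x/n)^{\sigma}=x^{\sigma}g(\sigma)$ yield a contribution bounded by $g(\sigma)x^{\sigma}/(\pi T)<2g(\sigma)x^{\sigma}/T$, which produces the first term of the announced estimate. In the \emph{near} range $x/e<n<ex$ I would use monotonicity of $\psi$ to replace $|a_{n}|$ by $\psi(ex)$ and $(x/n)^{\sigma}$ by $e^{\sigma}$, then partition this range into dyadic logarithmic shells $|\log(x/n)|\in[2^{-k-1},2^{-k}]$ for $0\leq k\leq\lfloor\log_{2}T\rfloor$; shell $k$ contains at most $\lceil 2^{-k}x\rceil+1$ integers, and the kernel bound on each gives a contribution $\ll e^{\sigma}\psi(ex)\bigl(x/T+2^{k}/T\bigr)$. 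Summing over the $O(\log T)$ shells telescopes to $O\!\bigl(e^{\sigma}\psi(ex)(x\log T/T+1)\bigr)$, which matches the second bracket, while the boundary indices $n=\lfloor x\rfloor,\lceil x\rceil$ and the mismatch at $n=x\in\mathbb{Z}$ contribute at most $O(\psi(ex))$.

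The main obstacle will be the near range: one must choose the dyadic cut-off $K=\lfloor\log_{2}T\rfloor$ carefully enough that the $\log T$ factor (rather than $\log x$) emerges from the shell sum, count the integers in each shell tightly but conservatively, and track the explicit constants $2$, $4$, and $e$ to match the stated inequality. A subsidiary bookkeeping point is the anomalous term $n=x$ when $x$ is an integer, where $J(1,\sigma,T)\approx 1/2$ while $\mathbf{1}_{n\leq x}=1$; this single term contributes at most $\psi(ex)$ and is absorbed into the final $\psi(ex)$ piece.
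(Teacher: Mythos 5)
Your plan follows the classical route (Perron kernel estimate $|J(y,c,T)-\delta(y)|\leq y^{c}/(\pi T|\log y|)$ plus a near/far split with dyadic shells), whereas the paper does something materially different: it quotes Ramar\'e's Theorem 7.1 in \cite{RamareEigen}, which packages the Perron error as
\[
\frac{2x^{\sigma}}{T}\int_{1/T}^{\infty}\frac{1}{u^{2}}\sum_{|\log(x/n)|\leq u}\frac{|a_{n}|}{n^{\sigma}}\,\dif{u},
\]
and then simply splits this integral at $u=1$. For $u\geq 1$ the inner sum is $\leq g(\sigma)$, giving $2g(\sigma)x^{\sigma}/T$; for $1/T\leq u\leq 1$ the inner sum is estimated by partial summation, using $\psi(n)\leq\psi(ex)$ and $e^{u}-e^{-u}\leq 2eu$, and the $u$-integration produces $T-1$ and $ex\log T$ directly. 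The Ramar\'e form thus interpolates smoothly between the near and far ranges, and the $\log T$ factor emerges from $\int_{1/T}^{1}\dif u/u$ rather than from a dyadic count; this is why the final constants are clean.

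The concrete gap in your version is the central region. Your shells $k=0,\dots,K=\lfloor\log_{2}T\rfloor$ cover $|\log(x/n)|\in[2^{-K-1},1)$, but not $|\log(x/n)|<2^{-K-1}$. Since $1/(2T)\leq 2^{-K-1}<1/T$, that residual band contains on the order of $x/T$ integers, not merely $n=\lfloor x\rfloor,\lceil x\rceil$. On that band the kernel estimate $y^{c}/(\pi T|\log y|)$ blows up, so a separate trivial bound must be used, and this contributes a term of size $\asymp e^{\sigma}\psi(ex)\,x/T$. For $T\geq e$ this is absorbed into $e^{\sigma}\psi(ex)\,x\log T/T$, but the theorem is asserted for all $T\geq 1$; for $1\leq T<e$ one has $\log T<1$, and the $x/T$ term is no longer dominated, yet the claimed bound has only $x\log T/T$ (which vanishes as $T\to 1^{+}$) plus $O(\psi(ex))$ with no $x$-dependence. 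Your shell sum has the same defect: for $1\leq T<2$ you have $K=0$ and $K+1=1$, so the "$(K+1)$" factor cannot be replaced by $\log_{2}T$, and again an unabsorbed $e^{\sigma}\psi(ex)\,x/T$ survives. To repair this along your lines one must bound the near-range and central contributions, for small $T$, in terms of $g(\sigma)x^{\sigma}/T$ rather than $\psi(ex)e^{\sigma}$ (e.g.\ by keeping $|a_{n}|n^{-\sigma}$ instead of replacing it by $\psi(ex)(x/n)^{-\sigma}\cdot x^{-\sigma}$). The Ramar\'e integral formulation does exactly this automatically: at $T=1$ the integral over $[1/T,1]$ is empty, and the whole bound collapses to $2g(\sigma)x^{\sigma}$.
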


\begin{proof}
By~\cite[Theorem 7.1]{RamareEigen} we know that the left-hand side of the latter inequality is not greater than
\[
\frac{2x^{\sigma}}{T}\int_{1/T}^{\infty}\frac{1}{u^2}\sum_{\left|\log{\frac{x}{n}}\right|\leq u}\frac{\left|a_n\right|}{n^{\sigma}}\dif{u} =
\frac{2x^{\sigma}}{T}\left(\int_{1/T}^{1}+\int_{1}^{\infty}\right)\frac{1}{u^2}\sum_{x/e^{u}\leq n\leq xe^{u}}\frac{\left|a_n\right|}{n^{\sigma}}\dif{u}.
\]
We need to bound the last two integrals. Trivially,
\[
\int_{1}^{\infty}\frac{1}{u^2}\sum_{x/e^{u}\leq n\leq xe^{u}}\frac{\left|a_n\right|}{n^{\sigma}}\dif{u} \leq g(\sigma).
\]
Let $\sigma\neq 1$. By partial summation we have
\[
\sum_{a\leq n\leq b}\frac{1}{n^{\sigma}} \leq a^{-\sigma} + \lfloor{b}\rfloor b^{-\sigma} - \lfloor{a}\rfloor a^{-\sigma} + \sigma\int_{a}^{b}\frac{\lfloor{y}\rfloor}{y^{1+\sigma}}\dif{y} \leq 2a^{-\sigma} + (b-a)a^{-\sigma}
\]
for $0<a\leq b$ since $\left|a^{1-\sigma}-b^{1-\sigma}\right|\leq (b-a)\left|\sigma-1\right|a^{-\sigma}$. It is not hard to see that the same inequality holds also for $\sigma=1$ since $\log{(b/a)}\leq b/a-1$. Therefore,
\[
\int_{1/T}^{1}\frac{1}{u^2}\sum_{x/e^{u}\leq n\leq xe^{u}}\frac{\left|a_n\right|}{n^{\sigma}}\dif{u} \leq
2e^{\sigma}\psi(ex)\frac{T}{x^{\sigma}}\left(1+\frac{ex\log{T}}{T}\right),
\]
where we used $e^{u}-e^{-u}\leq 2eu$ which is valid for $u\in[0,1]$. The final estimate from Theorem~\ref{thm:truncatedPerron} now easily follows.
\end{proof}

Let $\sigma>1$. By partial summation one can easily prove that
\begin{equation}
\label{eq:simpleboundzeta}
\zeta(\sigma) \leq \frac{\sigma}{\sigma-1}.
\end{equation}
Although not needed here, note that better estimate exists, see~\cite[Lemma 5.4]{Ramare}. We are going to use~\eqref{eq:simpleboundzeta} in the following corollaries to Theorem~\ref{thm:truncatedPerron}.

\begin{corollary}
\label{cor:Perron}
Let $x\geq e^{2}$. Then
\[
M(x) = \frac{1}{2\pi\ie}\int_{1+\frac{1}{\log{x}}-\ie e\sqrt{x}}^{1+\frac{1}{\log{x}}+\ie e\sqrt{x}}\frac{x^{z}}{z\zeta(z)}\dif{z} + P_1,
\]
where $\left|P_1\right| \leq 22.2\sqrt{x}\log{(ex)}$.
\end{corollary}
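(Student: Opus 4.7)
The plan is to apply Theorem~\ref{thm:truncatedPerron} directly to the Dirichlet series $1/\zeta(s)=\sum_{n=1}^{\infty}\mu(n)n^{-s}$, which has abscissa of absolute convergence $\bar{\sigma}=1$. I would take the parameters $\sigma=1+1/\log{x}$ and $T=e\sqrt{x}$, so that $\sigma>\max\{0,\bar\sigma\}=1$ is satisfied, and I would choose the majorising function $\psi\equiv 1$, which is a (weakly) increasing positive function with $|\mu(n)|\leq\psi(n)$.

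Next I would assemble the three ingredients appearing in the right-hand side of Theorem~\ref{thm:truncatedPerron}. For $g(\sigma)=\sum|\mu(n)|/n^{\sigma}\leq\zeta(\sigma)$, the elementary bound~\eqref{eq:simpleboundzeta} gives $g(\sigma)\leq\sigma/(\sigma-1)=1+\log{x}=\log{(ex)}$. Since $x^{\sigma}=x\cdot x^{1/\log{x}}=ex$, the main term already carries a factor $x^{\sigma}/T=\sqrt{x}$. The assumption $x\geq e^{2}$ forces $1/\log{x}\leq 1/2$, hence $\sigma\leq 3/2$ and $e^{\sigma}\leq e^{3/2}$; moreover $\log{T}=\log(e\sqrt{x})\leq\log(ex)$, and trivially $\psi(ex)=1$.

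Substituting these estimates into Theorem~\ref{thm:truncatedPerron} yields
\[
|P_1|\leq 2\log{(ex)}\cdot\sqrt{x}+4e^{3/2}\left(\sqrt{x}\log{(e\sqrt{x})}+1\right)\leq \left(2+4e^{3/2}\right)\sqrt{x}\log{(ex)}+4e^{3/2}.
\]
The only remaining step — and the only one that requires any care — is to absorb the additive constant $4e^{3/2}$ into the main term. For $x\geq e^{2}$ we have $\sqrt{x}\log{(ex)}\geq e\cdot 3=3e$, so
\[
4e^{3/2}\leq\frac{4e^{3/2}}{3e}\sqrt{x}\log{(ex)}=\tfrac{4}{3}e^{1/2}\sqrt{x}\log{(ex)}.
\]
Collecting constants gives $|P_1|\leq\bigl(2+4e^{3/2}+\tfrac{4}{3}e^{1/2}\bigr)\sqrt{x}\log{(ex)}$, and a direct numerical check shows $2+4e^{3/2}+\tfrac{4}{3}e^{1/2}\approx 22.125<22.2$, which delivers the stated bound. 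No genuine obstacle is expected: the work is entirely a bookkeeping of the constants arising from the choice $\sigma=1+1/\log x$, $T=e\sqrt{x}$ in the general truncated Perron formula, combined with the crude bound $\zeta(\sigma)\leq\sigma/(\sigma-1)$.
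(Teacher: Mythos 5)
Your proposal is correct and follows exactly the same route as the paper: apply Theorem~\ref{thm:truncatedPerron} with $a_n=\mu(n)$, $\psi\equiv1$, $\sigma=1+1/\log{x}$, $T=e\sqrt{x}$, and bound $g(\sigma)\leq\zeta(\sigma)\leq\sigma/(\sigma-1)$ via~\eqref{eq:simpleboundzeta}. The paper gives no explicit arithmetic, so your careful bookkeeping (showing $2+4e^{3/2}+\tfrac{4}{3}e^{1/2}\approx 22.125<22.2$) just spells out what the paper leaves to the reader.
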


\begin{proof}
Taking $a_n=\mu(n)$, $\psi\equiv1$, $\sigma=1+1/\log{x}$ and $T=e\sqrt{x}$ in Theorem~\ref{thm:truncatedPerron}, which conditions are clearly satisfied with such choice of parameters, and then using~\eqref{eq:simpleboundzeta} since $g(\sigma)\leq \zeta(\sigma)$ will give the stated estimate.
\end{proof}

\begin{corollary}
\label{cor:Perron2}
Let $x\geq e^{2}$, $y\geq 1$ and $k\geq 2$ be an integer. Then
\[
Q_{k,2}(x) = \frac{1}{2\pi\ie} \int_{1+\frac{1}{\log{x}}-\ie ex}^{1+\frac{1}{\log{x}}+\ie ex}\zeta(z)f_{y}(kz)\frac{x^z}{z}\dif{z} + P_2,
\]
where $f_y(s)$ is defined by~\eqref{eq:fy}, and $\left|P_2\right|\leq 26y\log{(ex)}$.
\end{corollary}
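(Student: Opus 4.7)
The plan is to apply Theorem~\ref{thm:truncatedPerron} to $f(s)=\zeta(s)f_{y}(ks)$ at exactly the chosen parameters. By~\eqref{eq:zetafy} this $f$ has the Dirichlet representation $\sum_{d=1}^{\infty}a_d d^{-s}$ with $a_d=\sum_{n^{k}\mid d,\,n>y}\mu(n)$ and abscissa of absolute convergence $\bar{\sigma}=1$. Lemma~\ref{lem:zetafy} gives $|a_d|\leq y$, so I take $\psi\equiv y$, which is positive (since $y\geq 1$) and trivially non-decreasing. Then Theorem~\ref{thm:truncatedPerron} is applicable with $\sigma_0\de 1+1/\log{x}$ and $T\de ex$, since $x\geq e^{2}\geq 1$, $T\geq 1$, and $\sigma_0>\bar{\sigma}$.

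For the first error term, I exploit that $x^{\sigma_0}/T=x\cdot e/(ex)=1$, so it reduces to $2g(\sigma_0)$. Bounding $g(\sigma_0)$ term by term using absolute convergence,
\[
g(\sigma_0)\leq\zeta(\sigma_0)\sum_{n>y}\frac{|\mu(n)|}{n^{k\sigma_0}}\leq\zeta(\sigma_0)\bigl(\zeta(k\sigma_0)-1\bigr),
\]
where the key point is that $y\geq 1$ forces the tail to start from $n\geq 2$, producing the ``$-1$''. Since $k\sigma_0>2$ and $\zeta$ is decreasing on $(1,\infty)$, $\zeta(k\sigma_0)\leq\zeta(2)=\pi^{2}/6$; combined with $\zeta(\sigma_0)\leq \sigma_0/(\sigma_0-1)=\log(ex)$ from~\eqref{eq:simpleboundzeta}, the first error term is at most $2\bigl(\pi^{2}/6-1\bigr)\log(ex)\approx 1.29\log(ex)$.

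For the second error term $4e^{\sigma_0}\bigl(e\psi(ex)x\log T/T+\psi(ex)\bigr)$, plugging in $\psi\equiv y$, $T=ex$, and $\log T=\log(ex)$ yields $4e^{\sigma_0}y\bigl(\log(ex)+1\bigr)$. Because $x\geq e^{2}$, I have $e^{\sigma_0}=e\cdot e^{1/\log x}\leq e^{3/2}$ and $\log(ex)\geq 3$, so $\log(ex)+1\leq\tfrac{4}{3}\log(ex)$. Hence the second error term is at most $\tfrac{16}{3}e^{3/2}y\log(ex)\approx 23.90\,y\log(ex)$. Summing the two contributions and folding the first into $y$ (using $y\geq 1$) gives at most $25.19\,y\log(ex)\leq 26\,y\log(ex)$, which is the claimed bound.

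The one delicate point is squeezing the constant down to $26$: the cruder estimate $g(\sigma_0)\leq\zeta(\sigma_0)\zeta(k\sigma_0)$ would deliver roughly $27.2\,y\log(ex)$, whereas retaining the ``$-1$'' coming from the restriction $n\geq 2$ makes the difference. Otherwise the argument is entirely parallel to Corollary~\ref{cor:Perron}, combining Lemma~\ref{lem:zetafy}, the estimate~\eqref{eq:simpleboundzeta}, and Theorem~\ref{thm:truncatedPerron}.
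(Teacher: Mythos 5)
Your proof is correct and follows essentially the same route as the paper's: apply Theorem~\ref{thm:truncatedPerron} with $\sigma=1+1/\log x$, $T=ex$, $\psi\equiv y$ (justified by Lemma~\ref{lem:zetafy}), and then invoke~\eqref{eq:simpleboundzeta}. The only point of divergence is the bound on $g(\sigma)$: you use the $y$-independent estimate $g(\sigma)\leq\zeta(\sigma)\bigl(\zeta(k\sigma)-1\bigr)\leq\bigl(\pi^{2}/6-1\bigr)\log(ex)$, whereas the paper takes the simpler $g(\sigma)\leq y\zeta(\sigma)\leq y\log(ex)$ coming directly from $|a_d|\leq y$. Both work: with the paper's bound the total is about $(2+\tfrac{16}{3}e^{3/2})\,y\log(ex)\approx 25.90\,y\log(ex)<26\,y\log(ex)$, so the ``$-1$'' refinement you emphasize in the last paragraph is not actually needed to reach the stated constant; the hypothetical overshoot to $\approx 27.2$ only arises from combining the $y$-independent $\zeta(\sigma)\zeta(k\sigma)$ bound with $y\geq 1$, which is not the estimate the paper uses.
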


\begin{proof}
Let $\zeta(s)f_{y}(ks)=\sum_{n=1}^{\infty}a_n n^{-s}$ be the Dirichlet series. By Lemma~\ref{lem:zetafy} we have $\left|a_n\right|\leq y$. Therefore, taking $\psi\equiv y$, $\sigma=1+1/\log{x}$ and $T=ex$ in Theorem~\ref{thm:truncatedPerron} and then using~\eqref{eq:simpleboundzeta} since $g(\sigma)\leq y\zeta(\sigma)$ will furnish the proof.
\end{proof}

Constants from Corollaries~\ref{cor:Perron} and~\ref{cor:Perron2} can be improved in terms of larger $x_0\leq x$, but such an improvement is negligible for our applications.

\subsection{General form of Theorem~\ref{thm:FinalApp}}
\label{sec:general}

We are going to formulate and prove general bounds for $M(x)$ and $Q_{k}(x)$, where constants are expressed with functions developed in previous sections.

\begin{theorem}
\label{thm:MainMoebius}
Assume the Riemann Hypothesis. Let $1/2<\sigma_0<1$, $2\gamma_1\leq t_0\leq 50$ and $x\geq x_0\geq 4\exp{\left(2e^2\right)}$. Then
\[
\left|M(x)\right| \leq \mathcal{N}_1\left(\sigma_0,t_0,x_0\right)x^{\sigma_0+\frac{1}{2}\omega_{0,1}}\log{x},
\]
where
\begin{equation}
\label{eq:omega01}
\omega_{0,1} = \omega_{0,1}\left(\sigma_0,t_0,x_0\right) \de \omega_1\left(\sigma_0,t_0;\log{\log{\left(\frac{3e\sqrt{x_0}}{2}\right)}}\right)
\end{equation}
with $\omega_1\left(\sigma_0,t_0;u\right)$ defined by~\eqref{eq:omega1}, and
\begin{multline}
\label{eq:N1}
\mathcal{N}_1\left(\sigma_0,t_0,x_0\right) \de \frac{1}{\pi}\left(\frac{3e}{2}\right)^{\omega_{0,1}}\left(\frac{1}{2}+\frac{1}{\log{x_0}}+\frac{1.1-\sigma_0}{x_0^{\sigma_0-\frac{1}{2}}\log{x_0}}\right) \\
+ \frac{1}{x_0^{\frac{1}{2}\omega_{0,1}}}\left(\frac{8e^{e^2}}{\pi\sigma_0\left(\sigma_0-\frac{1}{2}\right)\log{x_0}}+
\frac{23.6}{x_0^{\sigma_0-\frac{1}{2}}}\right).
\end{multline}
\end{theorem}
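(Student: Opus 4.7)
The plan is to exploit Corollary~\ref{cor:Perron} in conjunction with a contour shift, following the sketch in Section~\ref{sec:app}. Starting from
\[
M(x) = \frac{1}{2\pi\ie} \int_{c-\ie T}^{c+\ie T} \frac{x^{z}}{z\zeta(z)}\dif{z} + P_1, \qquad c = 1+\tfrac{1}{\log x},\ T = e\sqrt{x},
\]
with $|P_1|\leq 22.2\sqrt{x}\log(ex)$, I shift the path of integration to the vertical line $\Re(z)=\sigma_0$. Under RH, $1/\zeta(z)$ is holomorphic for $\Re(z)>1/2$, and the pole of $1/z$ at the origin lies outside the rectangular contour since $\sigma_0>1/2$. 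Hence Cauchy's theorem yields
\[
\int_{c-\ie T}^{c+\ie T}\frac{x^{z}\dif{z}}{z\zeta(z)} = \int_{\sigma_0-\ie T}^{\sigma_0+\ie T}\frac{x^{z}\dif{z}}{z\zeta(z)} + \int_{c-\ie T}^{\sigma_0-\ie T}\frac{x^{z}\dif{z}}{z\zeta(z)} + \int_{\sigma_0+\ie T}^{c+\ie T}\frac{x^{z}\dif{z}}{z\zeta(z)},
\]
and the task reduces to bounding each of these three integrals together with $P_1$, then absorbing everything into $\mathcal{N}_1(\sigma_0,t_0,x_0)\,x^{\sigma_0+\omega_{0,1}/2}\log x$.

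For the vertical integral at $\Re(z)=\sigma_0$, I would split at $|t|=2\exp(e^2)$. On $|t|\leq 2\exp(e^2)$, Lemma~\ref{lem:zetabound} gives $|1/\zeta(\sigma_0+\ie t)|\leq 4/(\sigma_0-1/2)$, and $|z|\geq\sigma_0$ then yields a contribution of size $\tfrac{8e^{e^2}}{\pi\sigma_0(\sigma_0-1/2)}x^{\sigma_0}$; this explains the first summand in the second parenthesis of~\eqref{eq:N1}. On $2\exp(e^2)<|t|\leq T$, the hypothesis $x_0\geq 4\exp(2e^2)$ forces $T\geq 2\exp(e^2)$ so Corollary~\ref{cor:main} applies, and the monotonicity of $\omega_1$ in $u$ together with $x\geq x_0$ gives the uniform bound $|1/\zeta(\sigma_0+\ie t)|\leq (3e\sqrt{x}/2)^{\omega_{0,1}}$. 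Integrating $1/|z|\leq 1/|t|$ produces the main term $\tfrac{(3e/2)^{\omega_{0,1}}}{\pi}\,x^{\sigma_0+\omega_{0,1}/2}\log(e\sqrt{x})$, and rewriting $\log(e\sqrt x)=1+\tfrac12\log x \leq (\tfrac12+1/\log x_0)\log x$ recovers the first two terms in the first parenthesis of~\eqref{eq:N1}.

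For the two horizontal segments with $\Im(z)=\pm T$ and $\sigma_0\leq\Re(z)\leq c$, Corollary~\ref{cor:main} again applies since $T\geq 2\exp(e^2)$, and the monotonicity of $\omega_1$ in $\sigma$ allows the uniform estimate $|1/\zeta(\sigma\pm\ie T)|\leq (3T/2)^{\omega_{0,1}}$. Using $|\sigma\pm\ie T|\geq T$, the identity $x^{c}=ex$, and $c-\sigma_0\leq 1.1-\sigma_0$ (which is legitimate because $\log x_0\geq 10$ is implied by $x_0\geq 4\exp(2e^2)$), direct integration of $x^{\sigma}$ yields $\tfrac{(3e/2)^{\omega_{0,1}}}{\pi}\cdot\tfrac{1.1-\sigma_0}{x_0^{\sigma_0-1/2}\log x_0}\cdot x^{\sigma_0+\omega_{0,1}/2}\log x$, matching the third term in the first parenthesis of~\eqref{eq:N1}. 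Finally, $|P_1|\leq 22.2\sqrt{x}\log(ex)\leq 22.2(1+1/\log x_0)\sqrt{x}\log x$; dividing by $x^{\sigma_0+\omega_{0,1}/2}\log x$ and maximizing over $x\geq x_0$ gives a factor bounded by $23.6/x_0^{\sigma_0-1/2+\omega_{0,1}/2}$, furnishing the last summand in~\eqref{eq:N1}.

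I expect the main obstacle to be uniform bookkeeping of the numerical constants so that each of the four contributions aligns precisely with a summand of $\mathcal{N}_1(\sigma_0,t_0,x_0)$. This will require repeated use of the monotonicity of $\omega_1$ in both its $\sigma$-- and $u$--arguments, the inequalities $\log x\geq\log x_0$ and $x^{\omega_{0,1}/2}\geq x_0^{\omega_{0,1}/2}$ for $x\geq x_0$, and the explicit lower bound $x_0\geq 4\exp(2e^2)$ which simultaneously guarantees the applicability of Corollary~\ref{cor:main} on the shifted contour and secures the tight numerical estimates such as $22.2(1+1/\log x_0)\leq 23.6$.
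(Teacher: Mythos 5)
Your proposal matches the paper's proof essentially step for step: the same Perron integral via Corollary~\ref{cor:Perron}, the same contour shift to $\Re(z)=\sigma_0$, the same split of the vertical integral at $|t|=2\exp(e^2)$ (handled by Lemma~\ref{lem:zetabound} and Corollary~\ref{cor:main} respectively), and the same treatment of the horizontal segments and of $P_1$. The monotonicity of $\omega_1$ in both arguments and the inequalities $x\geq x_0$, $22.2(1+1/\log x_0)\leq 23.6$ reproduce all four summands of $\mathcal{N}_1(\sigma_0,t_0,x_0)$ exactly as in the paper's own argument.
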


\begin{proof}
Take $T=e\sqrt{x}$ and observe that $T\geq 2\exp{\left(e^2\right)}$. By Cauchy's formula we have
\[
\int_{1+\frac{1}{\log{x}}-\ie T}^{1+\frac{1}{\log{x}}+\ie T}\frac{x^{z}}{z\zeta(z)}\dif{z} = \left(\int_{1+\frac{1}{\log{x}}-\ie T}^{\sigma_0-\ie T}+\int_{\sigma_0-\ie T}^{\sigma_0+\ie T}+\int_{\sigma_0+\ie T}^{1+\frac{1}{\log{x}}+\ie T}\right)\frac{x^{z}}{z\zeta(z)}\dif{z}
\]
since under RH the integrand is a holomorphic function for $\Re\{z\}>1/2$. Denote by $\mathcal{I}_1$, $\mathcal{I}_2$ and $\mathcal{I}_3$ the latter integrals, written in the same order. By Corollary~\ref{cor:Perron} we then have
\begin{equation}
\label{eq:boundMmain}
\left|M(x)\right| \leq \frac{1}{2\pi}\left(\left|\mathcal{I}_1\right| + \left|\mathcal{I}_2\right| + \left|\mathcal{I}_3\right|\right) + 22.2\sqrt{x}\log{(ex)}.
\end{equation}
We need to estimate each of the integrals. Corollary~\ref{cor:main} guarantees that
\begin{equation}
\label{eq:I1I3}
\left|\mathcal{I}_1\right| + \left|\mathcal{I}_3\right| \leq 2\left(1+\frac{1}{\log{x}}-\sigma_0\right)\left(\frac{3e}{2}\right)^{\omega_{0,1}}x^{\frac{1}{2}+\frac{1}{2}\omega_{0,1}}.
\end{equation}
By Corollary~\ref{cor:main} and Lemma~\ref{lem:zetabound} we also have
\begin{equation}
\label{eq:I2}
\left|\mathcal{I}_2\right| \leq \frac{16e^{e^2}}{\sigma_0\left(\sigma_0-\frac{1}{2}\right)}x^{\sigma_0} + 2\left(\frac{1}{2}+\frac{1}{\log{x}}\right)\left(\frac{3e}{2}\right)^{\omega_{0,1}}x^{\sigma_0+\frac{1}{2}\omega_{0,1}}\log{x}.
\end{equation}
Taking~\eqref{eq:I1I3} and~\eqref{eq:I2} into~\eqref{eq:boundMmain} gives the main estimate.
\end{proof}

Before proceeding to the formulation and proof of the similar result also for $Q_{k}(x)$, we need to obtain conditional estimate for $\left|f_y(s)\right|$. We do this entirely with the help of Theorem~\ref{thm:MainMoebius}.

\begin{lemma}
\label{lem:fy}
Assume the Riemann Hypothesis. Let $s=\sigma+\ie t$ with $\sigma>1$, $1/2<\sigma_0<1$, $2\gamma_1\leq t_0\leq 50$, $x\geq x_0\geq 4\exp{\left(2e^2\right)}$, and $y\geq x_0$. If
\begin{equation}
\label{eq:alpha}
\alpha=\alpha\left(\sigma_0,t_0,x_0\right) \de \sigma_0 + \frac{1}{2}\omega_{0,1}\left(\sigma_0,t_0,x_0\right) < 1,
\end{equation}
where $\omega_{0,1}\left(\sigma_0,t_0,x_0\right)$ is defined by~\eqref{eq:omega01}, then
\[
\left|f_{y}(s)\right| \leq \mathcal{N}_1\left(\sigma_0,t_0,x_0\right)\Phi\left(\sigma,\alpha,x_0\right)y^{\alpha-\sigma}\log{y},
\]
where $f_{y}(s)$ is defined by~\eqref{eq:fy} and
\begin{equation}
\label{eq:Phi}
\Phi\left(\sigma,\alpha,x_0\right) \de 1+\frac{\sigma}{\sigma-\alpha}\left(1+\frac{1}{\left(\sigma-\alpha\right)\log{x_0}}\right).
\end{equation}
\end{lemma}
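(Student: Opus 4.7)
The plan is to use Abel summation to convert the tail Dirichlet sum $f_y(s)$ into an integral involving the Mertens function on $[y,\infty)$, and then apply the conditional pointwise bound $|M(u)|\le \mathcal{N}_1 u^{\alpha}\log u$ provided by Theorem~\ref{thm:MainMoebius}. The hypothesis $y\ge x_0$ ensures the Mertens bound applies throughout the range of integration, while $\alpha<1<\sigma$ guarantees absolute convergence at infinity and thereby justifies every analytic manipulation.

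First I would derive the identity
\[
f_y(s) = -\frac{M(y)}{y^s}+s\int_y^\infty\frac{M(u)}{u^{s+1}}\,du
\]
by running partial summation on $\sum_{y<n\le N}\mu(n)n^{-s}$ and letting $N\to\infty$; the endpoint term $M(N)N^{-s}$ vanishes since $|M(N)|\ll N^{\alpha}\log N$ and $\alpha<\sigma$. The boundary contribution $-M(y)/y^s$ is exactly the source of the constant $1$ that appears inside $\Phi$.

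Next I would insert the bound from Theorem~\ref{thm:MainMoebius} into the integrand and reduce matters to evaluating $\int_y^\infty u^{\alpha-\sigma-1}\log u\,du$. A single integration by parts with $f=\log u$ and $dg=u^{\alpha-\sigma-1}\,du$ (using $\alpha-\sigma<0$ to kill the upper boundary) yields
\[
\int_y^\infty u^{\alpha-\sigma-1}\log u\,du = \frac{y^{\alpha-\sigma}\log y}{\sigma-\alpha}\left(1+\frac{1}{(\sigma-\alpha)\log y}\right).
\]
Replacing $\log y$ in the inner factor by the smaller $\log x_0$ (legitimate since $y\ge x_0$) and factoring out $\mathcal{N}_1 y^{\alpha-\sigma}\log y$ from the two contributions produces a bound of precisely the form $\mathcal{N}_1\,\Phi(\sigma,\alpha,x_0)\,y^{\alpha-\sigma}\log y$ claimed in the lemma.

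The step I expect to demand the most care, and the main obstacle, is extracting the factor $\sigma$ (rather than $|s|=\sqrt{\sigma^{2}+t^{2}}$) in front of $1/(\sigma-\alpha)$ inside $\Phi$: a brute-force majorisation of $\bigl|s\int_y^\infty M(u)u^{-s-1}\,du\bigr|$ produces the factor $|s|$, so reducing this to $\sigma$ requires a sharper organisation of the partial summation. A natural attempt is to rewrite the integral on $M(u)-M(y)$ and use $s\int_y^\infty u^{-s-1}\,du=y^{-s}$ to absorb the boundary term directly into the integrand, or alternatively to split $s=\sigma+it$ and handle the imaginary part by a secondary integration by parts that transfers the factor $it$ onto the kernel. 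This book-keeping of constants is where the precise form of $\Phi$ lives; once it is in place, the remaining algebra\,---\,collecting the two contributions and invoking $\log y\ge \log x_0$\,---\,is routine.
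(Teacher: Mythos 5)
Your approach coincides with the paper's: Abel summation to convert $f_y(s)$ into a boundary term plus an integral involving $M(u)$, insertion of the Mertens bound from Theorem~\ref{thm:MainMoebius}, and an exact evaluation of $\int_y^\infty u^{\alpha-\sigma-1}\log u\,\dif{u}$. Your arithmetic is correct, the boundary term $-M(y)/y^s$ indeed supplies the ``$1$'' in $\Phi$, and replacing $\log y$ by $\log x_0$ in the inner factor is justified because $y\ge x_0$.

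The $\sigma$-versus-$|s|$ issue you flag is the genuine gap, and neither of your proposed remedies closes it. Abel summation gives the factor $s$, not $\sigma$, in front of the integral, so taking moduli yields $|s|$ in the second term of $\Phi$; that is a strictly weaker statement than the lemma, and the difference matters because the lemma is invoked in Theorem~\ref{thm:MainkFree} along contour segments whose imaginary parts run up to $ex$ in size. Replacing $M(u)$ by $M(u)-M(y)$ merely makes the two boundary terms cancel, leaving $f_y(s)=s\int_y^\infty (M(u)-M(y))\,u^{-s-1}\,\dif{u}$ with $s$ still in front. Splitting $s=\sigma+\ie t$ and integrating by parts again does not remove the $t$-dependence either, because the oscillating factor $e^{-\ie t\log u}$ cannot be transferred onto $\mu(n)$ without losing the link to $M$. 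You should know, however, that the paper's own proof of this lemma writes the Abel-summation identity with a factor $\sigma$ (rather than $s$) in front of the integral, which, as you correctly suspect, does not follow from partial summation; the obstacle you located appears to reflect a slip in the source rather than a standard trick you failed to find. As written, your proposal proves only the $|s|$-variant of the lemma, so it is incomplete relative to the stated claim.
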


\begin{proof}
By partial summation we have
\[
\sum_{y<n\leq Y} \frac{\mu(n)}{n^s} = \frac{M(Y)}{Y^s} - \frac{M(y)}{y^s} + \sigma\int_{y}^{Y}\frac{M(u)}{u^{1+s}}\dif{u}
\]
for $Y>y$. Taking absolute values in the latter equality and then using Theorem~\ref{thm:MainMoebius} to estimate the right-hand side, we see that we can take $Y\to\infty$ since $\alpha-\sigma<0$. The stated inequality now easily follows.
\end{proof}


\begin{theorem}
\label{thm:MainkFree}
Assume the Riemann Hypothesis. Let $1/2<\sigma_0<1$, $2\gamma_1\leq t_1\leq 50$, $2\gamma_1\leq t_2\leq 50$, $x\geq x_0^{k+1}$, $x_0\geq 4\exp{\left(2e^2\right)}$ and $\alpha<1$, where $\alpha\left(\sigma_0,t_1,x_0\right)$ is defined by~\eqref{eq:alpha}. Then
\[
\left|Q_k(x) - \frac{x}{\zeta(k)}\right| \leq \mathcal{N}_2\left(\sigma_0,t_1,t_2,x_0,k,\alpha\right) x^{\frac{1/2+\alpha}{k+1}+\omega_{0,2}}\log^{2}{x},
\]
where
\[
\omega_{0,2} = \omega_{0,2}\left(t_2,x_0\right) \de \omega_2\left(t_2;\log{\log{\left(\frac{3e x_0}{2}\right)}}\right)
\]
with $\omega_2\left(t_2;u\right)$ defined by~\eqref{eq:omega2}, and
\begin{flalign*}
\mathcal{N}_2\left(\sigma_0,t_1,t_2,x_0,k,\alpha\right) &\de \frac{\mathcal{N}_1\Phi}{\pi(k+1)}\left(\frac{3e}{2}\right)^{\omega_{0,2}}\left(1+\frac{1}{\log{x_0}}
+\frac{0.57}{\sqrt{x_0}\log^{2}{x_0}}\right) \\
&+ \frac{1}{x_0^{\omega_{0,2}}\log{x_0}}
\left(\frac{\mathcal{N}_1}{k+1}\left(\frac{56e^{e^{2}}\Phi}{\pi}+\frac{1}{2x_0^{\frac{1}{2(k+1)}}}\right)
+\frac{27.7}{x_0^{\frac{\alpha-1/2}{k+1}}}\right)
\end{flalign*}
with $\mathcal{N}_1=\mathcal{N}_1\left(\sigma_0,t_1,x_0\right)$ and $\Phi=\Phi\left(k/2,\alpha,x_0\right)$ defined by~\eqref{eq:N1} and~\eqref{eq:Phi}, respectively.
\end{theorem}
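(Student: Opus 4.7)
The plan is to closely follow the outline sketched at the end of Section~\ref{sec:kfree}. Set $y\de x^{1/(k+1)}$, so $y\geq x_0$ by the hypothesis $x\geq x_0^{k+1}$, and decompose $Q_k(x)=Q_{k,1}(x)+Q_{k,2}(x)$. The identity~\eqref{eq:Q1} gives
\[
Q_{k,1}(x)-\frac{x}{\zeta(k)}=-xf_y(k)-\frac{1}{2}M(y)-S_k(x,y),\qquad |S_k(x,y)|\leq\frac{y}{2},
\]
while Corollary~\ref{cor:Perron2} supplies
\[
Q_{k,2}(x)=\frac{1}{2\pi\ie}\int_{c-\ie T}^{c+\ie T}\zeta(z)f_y(kz)\frac{x^z}{z}\dif z+P_2
\]
with $c=1+1/\log x$, $T=ex$, and $|P_2|\leq 26y\log(ex)$.

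The crucial step is to shift the line of integration from $\Re z=c$ to $\Re z=\sigma'$ for some $\sigma'\in(1/2,3/4]$. Under RH, $\zeta(z)$ is holomorphic in the strip $\sigma'\leq\Re z\leq c$ except for a simple pole at $z=1$, and $f_y(kz)$ extends holomorphically there with usable bounds supplied by Lemma~\ref{lem:fy}---whose applicability is the precise content of the hypothesis $\alpha<1$. By the residue theorem the contour shift picks up $xf_y(k)$ at $z=1$, which cancels the $-xf_y(k)$ present in $Q_{k,1}-x/\zeta(k)$, leaving
\begin{multline*}
Q_k(x)-\frac{x}{\zeta(k)}=-\frac{1}{2}M(y)-S_k(x,y)+P_2\\
+\frac{1}{2\pi\ie}\int_{\sigma'-\ie T}^{\sigma'+\ie T}\zeta(z)f_y(kz)\frac{x^z}{z}\dif z+\mathcal{H}(\sigma'),
\end{multline*}
where $\mathcal{H}(\sigma')$ collects the two horizontal segments at $\Im z=\pm T$.

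I would then bound each piece in turn. Theorem~\ref{thm:MainMoebius} yields $|M(y)|\leq \mathcal{N}_1 y^{\alpha}\log y$, while the bounds on $S_k$ and $P_2$ are already explicit. For $\mathcal{H}(\sigma')$ I would combine inequality~\eqref{eq:corMain2} of Corollary~\ref{cor:main} (which applies since $T=ex\geq 2\exp(e^2)$ and gives $|\zeta(z)|\leq(3ex/2)^{\omega_{0,2}}$ on the segments) with the $f_y$-bound of Lemma~\ref{lem:fy}, using the short segment length $c-\sigma'\leq 1/2+1/\log x$ together with $|x^z/z|\leq x^c/T$. For the vertical integral at $\sigma'$, split the range $|t|\leq T$ at $|t|=2\exp(e^2)$: on the small-$|t|$ part apply Lemma~\ref{lem:zetabound2} (so $|\zeta|\leq 14$ since $\sigma'\leq 3/4$), and on the large-$|t|$ part apply Corollary~\ref{cor:main}, giving $|\zeta(\sigma'+\ie t)|\leq(3ex/2)^{\omega_{0,2}}$. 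In both pieces Lemma~\ref{lem:fy} supplies $|f_y(k\sigma'+\ie kt)|\leq \mathcal{N}_1\Phi(k\sigma',\alpha,x_0)y^{\alpha-k\sigma'}\log y$, and $\int_{-T}^{T}|z|^{-1}\dif t=O(\log T)$.

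The resulting bound is continuous in $\sigma'$ on $(1/2,3/4]$, so I would let $\sigma'\downarrow 1/2$, replacing $\Phi(k\sigma',\alpha,x_0)$ by $\Phi(k/2,\alpha,x_0)$. Since $y^{\alpha-k\sigma'}x^{\sigma'}=x^{(\alpha+\sigma')/(k+1)}$ tends to $x^{(1/2+\alpha)/(k+1)}$, and the $\zeta$-estimate contributes an additional factor $x^{\omega_{0,2}}$, the dominant order is $x^{(1/2+\alpha)/(k+1)+\omega_{0,2}}$; the two logarithms $\log y\cdot\log T=\frac{1}{k+1}\log x\cdot\log(ex)$ yield the asserted $\log^2 x$ divided by $k+1$. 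All subordinate terms ($|M(y)|$, $|S_k|$, $|P_2|$, the horizontals and the small-$|t|$ vertical piece) are of strictly smaller order in $x$ and can be absorbed into $x^{(1/2+\alpha)/(k+1)+\omega_{0,2}}\log^2 x$ at the cost of factors of the form $x_0^{-\ast}(\log x_0)^{-1}$. The main obstacle is therefore pure bookkeeping: one must track each error term carefully enough to recover the precise form of $\mathcal{N}_2$, with the leading $\mathcal{N}_1\Phi(3e/2)^{\omega_{0,2}}/(\pi(k+1))$ piece arising from the large-$|t|$ vertical integral, the constant $56e^{e^2}/\pi$ coming from the segment where Lemma~\ref{lem:zetabound2} replaces Corollary~\ref{cor:main}, and the $27.7$ bundling together the Perron error $P_2$, the $M(y)/2$ contribution, and the trivial $|S_k|\leq y/2$.
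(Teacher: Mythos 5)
Your proposal follows the paper's proof step for step: the decomposition via~\eqref{eq:Q1} and Corollary~\ref{cor:Perron2}, the contour shift to $\Re z=\sigma'$ picking up the residue $xf_y(k)$ at $z=1$ that cancels the $-xf_y(k)$ from $Q_{k,1}$, the splitting of the vertical integral at $|t|=2\exp(e^2)$ with Lemma~\ref{lem:zetabound2} below and Corollary~\ref{cor:main} above, Lemma~\ref{lem:fy} for $f_y(kz)$, and finally $\sigma'\downarrow 1/2$. The only slip is in the closing attribution: the constant $27.7$ in $\mathcal{N}_2$ absorbs only $P_2$ and $|S_k|\leq y/2$, while the $|M(y)|/2$ contribution appears as the separate summand $\mathcal{N}_1/\bigl(2(k+1)x_0^{1/(2(k+1))}\bigr)$.
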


\begin{proof}
Take $T=ex$, $y=x^{1/(k+1)}$ and $1/2<\sigma'\leq 3/4$. Observe that $x\geq x_0$, $T\geq 2\exp{\left(e^2\right)}$ and $y\geq x_0$. By Cauchy's formula we have
\begin{multline*}
\int_{1+\frac{1}{\log{x}}-\ie T}^{1+\frac{1}{\log{x}}+\ie T}\zeta(z)f_{y}(kz)\frac{x^z}{z}\dif{z} = \\
\left(
\int_{1+\frac{1}{\log{x}}-\ie T}^{\sigma'-\ie T} + \int_{\sigma'-\ie T}^{\sigma'+\ie T} + \int_{\sigma'+\ie T}^{1+\frac{1}{\log{x}}+\ie T}\right)\zeta(z)f_{y}(kz)\frac{x^z}{z}\dif{z} + \left(2\pi\ie\right)xf_y(k).
\end{multline*}
since under RH the integrand is a holomorphic function for $\Re\{z\}>1/2$ and $z\neq 1$, having a simple pole at $z=1$. Denote by $\mathcal{I}_1$, $\mathcal{I}_2$ and $\mathcal{I}_3$ the latter integrals, written in the same order. By~\eqref{eq:Q1} and Corollary~\ref{cor:Perron2} we then have
\begin{equation*}
Q_k(x) - \frac{x}{\zeta(k)} = \frac{1}{2\pi\ie}\left(\mathcal{I}_1 + \mathcal{I}_2 + \mathcal{I}_3\right) + P_2 - \frac{1}{2}M(y) - S_k(x,y),
\end{equation*}
which by Corollary~\ref{cor:Perron2} and Theorem~\ref{thm:MainMoebius} immediately implies
\begin{multline}
\label{eq:mainQbound}
\left|Q_k(x) - \frac{x}{\zeta(k)}\right| \leq \frac{1}{2\pi}\left(\left|\mathcal{I}_1\right| + \left|\mathcal{I}_2\right| + \left|\mathcal{I}_3\right|\right) \\
+ 26x^{\frac{1}{k+1}}\log{(ex)} + \frac{\mathcal{N}_1}{2(k+1)}x^{\frac{\alpha}{k+1}}\log{x} + \frac{1}{2}x^{\frac{1}{k+1}}.
\end{multline}
Corollary~\ref{cor:main} and Lemma~\ref{lem:fy} guarantee that
\begin{equation}
\label{eq:I11I33}
\left|\mathcal{I}_1\right| + \left|\mathcal{I}_3\right| \leq \frac{2\left(1+\frac{1}{\log{x}}-\sigma'\right)\left(\frac{3e}{2}\right)^{\omega_{0,2}}\mathcal{N}_1}{k+1}
\Phi\left(k\sigma',\alpha,x_0\right)x^{\frac{\alpha-k\sigma'}{k+1}+\omega_{0,2}}
\end{equation}
since $\Phi\left(\sigma,\alpha,x_0\right)y^{\alpha-\sigma}$ is a decreasing function in $\sigma$. By Corollary~\ref{cor:main} and Lemma~\ref{lem:zetabound2} we also have
\begin{multline}
\label{eq:I22}
\left|\mathcal{I}_2\right| \leq \frac{56e^{e^2}\mathcal{N}_1}{\sigma'(k+1)}\Phi\left(k\sigma',\alpha,x_0\right) x^{\frac{\sigma'+\alpha}{k+1}}\log{x} \\
+ \frac{2\left(\frac{3e}{2}\right)^{\omega_{0,2}}\left(1+\frac{1}{\log{x}}\right)\mathcal{N}_1}{k+1}\Phi\left(k\sigma',\alpha,x_0\right) x^{\frac{\sigma'+\alpha}{k+1}+\omega_{0,2}}\log^{2}{x}.
\end{multline}
Taking~\eqref{eq:I11I33} and~\eqref{eq:I22} into~\eqref{eq:mainQbound}, and then letting $\sigma'\to 1/2$, gives the final estimate.
\end{proof}

\section{Proof of Theorem~\ref{thm:FinalApp} and Corollary~\ref{cor:m}}
\label{sec:proofApp}

We are now in the position to prove estimates~\eqref{eq:thmMertensGen}, \eqref{eq:thmkFree}, \eqref{eq:thmMertens} and~\eqref{eq:thmSquarefree} from Theorem~\ref{thm:FinalApp}, and estimate~\eqref{eq:m} from Corollary~\ref{cor:m}. All numerical computations have been made with \emph{Mathematica}.

\begin{proof}[Proof of~\eqref{eq:thmMertensGen}]
We are using Theorem~\ref{thm:MainMoebius}. Take $t_0=38.0820263$,
\[
x_0\geq 10^{10^{4.487}}, \quad u=\log{\log{\left(\frac{3e\sqrt{x_0}}{2}\right)}}, \quad \sigma_0=\frac{1}{2}+\frac{0.842996}{u}.
\]
Our choice for the constants will be clear from the proof~\eqref{eq:thmMertens}, see the first row in Table~\ref{tab:Proof1}. By the definition of the function $\omega_1$, see~\eqref{eq:omega1}, we have
\[
u\cdot\omega_1\left(\sigma_0,t_0;u\right) = 2\left(\widehat{\phi}_1\left(u\right)+\frac{u}{50e^{u+e^{u}}}\right)
\log{\left(1+\frac{2\lambda\left(t_0\right)}{0.842996}\right)} + u\cdot\omega\left(t_0;u\right).
\]
This function is decreasing in $u\geq10$ which implies
\[
\frac{7.3}{u} \leq \omega_{0,1}\left(\sigma_0,t_0,x_0\right) \leq \frac{8.764095}{u}.
\]
With this we can show that $\sigma_0+\frac{1}{2}\omega_{0,1} \leq 1/2 + 5.2251u^{-1}$ and $\mathcal{N}_1\left(\sigma_0,t_0,x_0\right)\leq 0.6$. By taking $x_0=x$, inequality~\eqref{eq:thmMertensGen} now easily follows.
\end{proof}

\begin{proof}[Proof of~\eqref{eq:thmkFree}]
We are using Theorem~\ref{thm:MainkFree}. Take $t_1=30.3977424$, $t_2=2\gamma_1$,
\[
x_0 \geq 10^{10^{23.147}}, u_1=\log{\log{\left(\frac{3e\sqrt{x_0}}{2}\right)}}, u_2=\log{\log{\left(\frac{3e x_0}{2}\right)}}, \sigma_0 = \frac{1}{2} + \frac{0.75782}{u_2}.
\]
Our choice for the constants will be clear from the proof~\eqref{eq:thmSquarefree}, see the first row in Table~\ref{tab:Proof2}. By the definition of the function $\omega_1$, see~\eqref{eq:omega1}, we have
\[
u_1\omega_1\left(\sigma_0,t_1;u_1\right) = 2\left(\widehat{\phi}_1\left(u_1\right)+\frac{u_1}{50e^{u_1+e^{u_1}}}\right)\log{\left(1+\frac{2\lambda\left(t_1\right)u_2}{0.75782u_1}\right)} + u_1\omega\left(t_1;u_1\right).
\]
We have $1<u_2/u_1\leq 1.013$. Similarly as in the previous proof we can deduce from this that
\begin{equation}
\label{eq:2ndproofOmega01}
\frac{7.5}{u_1} \leq \omega_{0,1}\left(\sigma_0,t_1,x_0\right) \leq \frac{7.525}{u_1}.
\end{equation}
By the definition of the function $\omega_2$, see~\eqref{eq:omega2}, we also have
\[
u_2\omega_2\left(t_2;u_2\right) = \frac{1}{\pi}\lambda\left(2\gamma_1\right) + u_2\omega\left(2\gamma_1;u_2\right),
\]
which implies $\omega_{0,2}\left(t_2,x_0\right) \leq 7.492/u_2$. Therefore,
\begin{multline}
\label{eq:2ndproofMain}
\frac{1}{k+1}\left(\frac{1}{2}+\sigma_0+\frac{1}{2}\omega_{0,1}\left(\sigma_0,t_1,x_0\right)\right) + \omega_{0,2}\left(t_2,x_0\right) \\
\leq \frac{1}{k+1} + \frac{7.525}{2(k+1)u_1} + \left(7.492+\frac{0.75782}{k+1}\right)\frac{1}{u_2}.
\end{multline}
In particular, $\alpha=\alpha\left(\sigma_0,t_1,x_0\right)\leq 0.585$. Then this and~\eqref{eq:2ndproofOmega01} imply $\mathcal{N}_1\left(\sigma_0,t_1,x_0\right)\leq 0.2$ and $\Phi\left(k/2,\alpha,x_0\right)\leq 3.41$, which further provides $\mathcal{N}_2\leq 0.1$. The proof is thus furnished by taking $x_0=x^{1/(k+1)}$ in~\eqref{eq:2ndproofMain}.
\end{proof}

\begin{proof}[Proof of~\eqref{eq:thmMertens}]
We are using Theorem~\ref{thm:MainMoebius}. Take $u_0\de \log{\log{\left(3e\sqrt{x_0}/2\right)}}$ and $\log{x_0}=10^{X}\log{10}$. For each $\alpha$ from Table~\ref{tab:FinalApp} we are searching for $\sigma_0\in(1/2,1)$ and $t_0\in\left[30,50\right]$ such that
\begin{equation}
\label{eq:alpha0}
\alpha_0\left(\sigma_0,t_0;u_0\right)\de \sigma_0+\frac{1}{2}\omega_{1}\left(\sigma_0,t_0;u_0\right) < \alpha
\end{equation}
for the smallest possible $u_0$. We do this in the following way: for the particular $u_0$ we find the minimum of $\alpha_0\left(\sigma_0,t_0;u_0\right)$ by calculating this function for each
\[
\sigma_0\in\left\{\frac{1}{2}+\frac{n}{2000} \colon 1\leq n\leq 1000\right\}
\]
while using \texttt{FindMinimum} to determine $t_0$ in each case of $\sigma_0$. With such process we obtain values for $u_0$, $\sigma_0$ and $t_0$, and then we also calculate values for $\mathcal{N}_1$ and $X$, see Table~\ref{tab:Proof1}. Values for $X$ and $A$ from Table~\ref{tab:FinalApp} now simply follow.
\end{proof}

\begin{table}
\centering
\begin{footnotesize}
\begin{tabular}{cccccc}
\toprule
$u_0$ & $\sigma_0$ & $t_0$ & $\alpha_0$ & $\mathcal{N}_1$ & $X$ \\
\midrule
$10.472$ & $0.5805$ & $38.0820263$ & $0.998969...$ & $0.516044...$ & $4.486728...$ \\
$10.600$ & $0.5795$ & $37.7819602$ & $0.989916...$ & $0.504493...$ & $4.542320...$ \\
$12.240$ & $0.5650$ & $34.7754417$ & $0.899710...$ & $0.407781...$ & $5.254575...$ \\
$13.580$ & $0.5575$ & $33.2402484$ & $0.849801...$ & $0.361954...$ & $5.836532...$ \\
$15.460$ & $0.5495$ & $31.9460694$ & $0.799913...$ & $0.321749...$ & $6.653006...$ \\
$18.250$ & $0.5415$ & $31.0325517$ & $0.749867...$ & $0.285883...$ & $7.864688...$ \\
$22.700$ & $0.5330$ & $30.5540678$ & $0.699211...$ & $0.253936...$ & $9.797299...$ \\
$30.080$ & $0.5250$ & $30.4162800$ & $0.649986...$ & $0.226151...$ & $13.00239...$ \\
$45.110$ & $0.5165$ & $30.3958431$ & $0.599987...$ & $0.201251...$ & $19.52983...$ \\
$90.220$ & $0.5085$ & $30.4079746$ & $0.549996...$ & $0.178845...$ & $39.12086...$ \\
\bottomrule
\end{tabular}
\end{footnotesize}
   \caption{Values for the parameters from the proof of~\eqref{eq:thmMertens}.}
   \label{tab:Proof1}
\end{table}

\begin{proof}[Proof of~\eqref{eq:thmSquarefree}]
We are using Theorem~\ref{thm:MainkFree} for $k=2$. Take $u_0=\log{\log{\left(3e x_0/2\right)}}$ and $\log{x_0}=10^{Y}\log{10}$. For each $\beta$ from Table~\ref{tab:FinalApp} we are searching for $\sigma_0\in(1/2,1)$ and $t_1\in\left[30,50\right]$ such that
\[
\beta_0\left(\sigma_0,t_1;u_0\right) \de \frac{1}{3}\left(\frac{1}{2}+\alpha_1\left(\sigma_0,t_1;u_0\right)\right) + \omega_{2}\left(2\gamma_1;u_0\right) < \beta
\]
for the smallest possible $u_0$, where
\[
\alpha_1\left(\sigma_0,t_1;u_0\right)\de \sigma_0+\frac{1}{2}\omega_1\left(\sigma_0,t_1;u_0-0.6932\right).
\]
Note that
\[
u_0-0.6932 \leq u_0+\log{\frac{1+e^{-u_0}\log{\frac{3e}{2}}}{2}} = \log{\log{\left(\frac{3e\sqrt{x_0}}{2}\right)}},
\]
which implies $\alpha\left(\sigma_0,t_1,x_0\right)\leq \alpha_1$. Now the method is the same as in the proof of inequality~\eqref{eq:thmMertens}, and the values for the parameters are listed in Table~\ref{tab:Proof2}. Values for $Y$ and $B$ from Table~\ref{tab:FinalApp} now simply follow.
\end{proof}

\begin{table}
   \centering
\begin{footnotesize}
\begin{tabular}{rcccccc}
\toprule
$u_0$ & $\sigma_0$ & $t_1$ & $\alpha_1$ & $\beta_0$ & $\mathcal{N}_2$ & $Y$ \\
\midrule
$54.13$ & $0.5140$ & $30.3977424$ & $0.584406...$ & $0.499874...$ & $0.085162...$ & $23.14614...$ \\
$54.42$ & $0.5140$ & $30.3999606$ & $0.583951...$ & $0.498984...$ & $0.084900...$ & $23.27209...$ \\
$57.54$ & $0.5130$ & $30.3927510$ & $0.579344...$ & $0.489984...$ & $0.082523...$ & $24.62708...$ \\
$61.45$ & $0.5125$ & $30.4039281$ & $0.574239...$ & $0.479997...$ & $0.079839...$ & $26.32518...$ \\
$65.94$ & $0.5115$ & $30.3989406$ & $0.569128...$ & $0.469992...$ & $0.077357...$ & $28.27516...$ \\
$71.14$ & $0.5105$ & $30.3931181$ & $0.564026...$ & $0.459987...$ & $0.074965...$ & $30.53349...$ \\
$77.23$ & $0.5100$ & $30.4071541$ & $0.558934...$ & $0.449985...$ & $0.072556...$ & $33.17834...$ \\
$84.45$ & $0.5090$ & $30.4008385$ & $0.553851...$ & $0.439997...$ & $0.070337...$ & $36.31395...$ \\
$135.05$ & $0.5055$ & $30.3927192$ & $0.533570...$ & $0.399998...$ & $0.062108...$ & $58.28925...$ \\
$540.10$ & $0.5015$ & $30.4310282$ & $0.508366...$ & $0.349993...$ & $0.053262...$ & $234.2002...$ \\
\bottomrule
\end{tabular}
\end{footnotesize}
   \caption{Values for the parameters from the proof of~\eqref{eq:thmSquarefree}.}
   \label{tab:Proof2}
\end{table}

With the help of Theorem~\ref{thm:MainMoebius} we are able to prove the following simple generalization of Corollary~\ref{cor:m}.

\begin{theorem}
\label{thm:m}
Assume the Riemann Hypothesis. Let $s=\sigma+\ie t$, $1/2<\sigma_0<1$, $2\gamma_1\leq t_0\leq 50$ and $x\geq x_0\geq 4\exp{\left(2e^2\right)}$. Then
\[
\left|\sum_{n\leq x}\frac{\mu(n)}{n^s} - \frac{1}{\zeta(s)}\right| \leq
\mathcal{N}_1\left(1+\frac{|s|}{\sigma-\alpha}\right)\frac{\log{x}}{x^{\sigma-\alpha}} + \frac{\mathcal{N}_1|s|}{\left(\sigma-\alpha\right)^2 x^{\sigma-\alpha}}
\]
for $\sigma>1/2$ and $\alpha<\sigma$, where $\mathcal{N}_1\left(\sigma_0,t_0,x_0\right)$ and $\alpha\left(\sigma_0,t_0,x_0\right)$ are defined by~\eqref{eq:N1} and~\eqref{eq:alpha}, respectively.
\end{theorem}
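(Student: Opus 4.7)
The plan is to express $\sum_{n\leq x}\mu(n)/n^s - 1/\zeta(s)$ as a boundary term plus a tail integral, and then estimate each piece with the Mertens bound from Theorem~\ref{thm:MainMoebius}. Concretely, the target identity is
\[
\sum_{n\leq x}\frac{\mu(n)}{n^s} - \frac{1}{\zeta(s)} = \frac{M(x)}{x^s} - s\int_{x}^{\infty}\frac{M(u)}{u^{s+1}}\,\dif{u},
\]
valid for $s$ in the range of interest. Note that since $\alpha=\sigma_{0}+\omega_{0,1}/2>1/2$, the hypotheses $\sigma>1/2$ and $\sigma>\alpha$ collapse to $\sigma>\alpha$.

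First I would establish the Perron-type integral representation $1/\zeta(s)=s\int_{1}^{\infty}M(u)u^{-s-1}\,\dif{u}$. For $\sigma>1$ this is Abel summation applied to the absolutely convergent Dirichlet series $\sum\mu(n)/n^s$, the boundary term $M(N)/N^{s}$ vanishing trivially as $N\to\infty$. To extend to $\sigma>\alpha$, Theorem~\ref{thm:MainMoebius} supplies $|M(u)|\leq\mathcal{N}_{1}u^{\alpha}\log u$ for $u\geq x_{0}$, which makes the right-hand side a holomorphic function on the half-plane $\sigma>\alpha$; since $1/\zeta(s)$ is holomorphic there under RH, analytic continuation forces equality. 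Next, Abel summation applied to the finite sum gives $\sum_{n\leq x}\mu(n)/n^{s}=M(x)/x^{s}+s\int_{1}^{x}M(u)u^{-s-1}\,\dif{u}$, and subtracting the integral representation produces the target identity.

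Finally, taking moduli in the target identity and invoking $|M(u)|\leq\mathcal{N}_{1}u^{\alpha}\log u$ (legal throughout since $u\geq x\geq x_{0}$) bounds the boundary contribution by $\mathcal{N}_{1}\log x/x^{\sigma-\alpha}$, and the tail by a single integration by parts,
\[
|s|\,\mathcal{N}_{1}\int_{x}^{\infty}\frac{\log u}{u^{\sigma-\alpha+1}}\,\dif{u}=\frac{|s|\,\mathcal{N}_{1}\log x}{(\sigma-\alpha)x^{\sigma-\alpha}}+\frac{|s|\,\mathcal{N}_{1}}{(\sigma-\alpha)^{2}x^{\sigma-\alpha}},
\]
valid whenever $\sigma>\alpha$. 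Combining the three contributions recovers the stated inequality verbatim. The only delicate step is the analytic continuation of the integral representation of $1/\zeta(s)$ from $\sigma>1$ down to $\sigma>\alpha$, but this is immediate from Theorem~\ref{thm:MainMoebius}; no further input about $\zeta$ enters the argument, and all remaining computations are elementary.
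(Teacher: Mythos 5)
Your proof is correct and follows the paper's own argument essentially verbatim: partial summation plus the representation $1/\zeta(s)=s\int_{1}^{\infty}M(u)u^{-s-1}\,\dif{u}$ (which the paper quotes as~\eqref{eq:PartSumMobius}, analytically continued to $\sigma>\alpha$ via RH) yields the identity $\sum_{n\leq x}\mu(n)n^{-s}-1/\zeta(s)=M(x)x^{-s}-s\int_{x}^{\infty}M(u)u^{-s-1}\,\dif{u}$, after which Theorem~\ref{thm:MainMoebius} and the integration by parts you carry out give the stated bound exactly. You have merely spelled out the analytic-continuation step that the paper leaves implicit, so this is the same route, not a different one.
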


\begin{proof}
By partial summation and~\eqref{eq:PartSumMobius} we have
\[
\sum_{n\leq x}\frac{\mu(n)}{n^s} = \frac{1}{\zeta(s)} + \frac{M(x)}{x^s} - s\int_{x}^{\infty}\frac{M(u)}{u^{1+s}}\dif{u}.
\]
By Theorem~\ref{thm:MainMoebius} we also have $\left|M(u)\right|\leq \mathcal{N}_1 u^{\alpha}\log{u}$ for $u\geq x$. The final inequality now easily follows after the exact evaluation of the corresponding integral.
\end{proof}

\begin{proof}[Proof of Corollary~\ref{cor:m}]
Take $s=1$, $\sigma_0=0.5795$, $t_0=37.7819602$, and
\[
x_0=\left(\frac{2}{3e}\right)^2 e^{2e^{10.6}}
\]
in Theorem~\ref{thm:m} while using Table~\ref{tab:Proof1} and the fact that $\alpha=\alpha_0$.
\end{proof}


\subsection{Acknowledgements} The author thanks Roger Heath-Brown for having a discussion about Remark~\ref{rem:RHB} and~\eqref{eq:problem}, as well as Richard Brent and Harald Helfgott for useful remarks. Finally, the author is grateful to his supervisor Tim Trudgian for continual guidance and support while writing this manuscript.



\providecommand{\bysame}{\leavevmode\hbox to3em{\hrulefill}\thinspace}
\providecommand{\MR}{\relax\ifhmode\unskip\space\fi MR }
\providecommand{\MRhref}[2]{%
  \href{http://www.ams.org/mathscinet-getitem?mr=#1}{#2}
}
\providecommand{\href}[2]{#2}

\end{document}